\renewcommand\textrm[1]{{\rm #1}}
\newlength{\listindent}
\newcommand\enumroman[1]{{\rm(\textit{\roman{#1}})}}
\newcommand\enumarabic[1]{\arabic{#1}.}
\newcounter{enumcounter}
\newcommand\enum[1]{\setcounter{enumcounter}{#1}\enumroman{enumcounter}}
\newenvironment{items}
   {\begin{list}{\textbullet} {
                    \setlength{\topsep}{0.1ex plus0.1ex minus0.1ex}
                    \setlength{\itemsep}{0.1ex plus0.1ex minus0.1ex}
                    \setlength{\parsep}{0.1ex plus0.1ex minus0.1ex}
                    \setlength{\itemindent}{0in}
                    \setlength{\labelwidth}{1ex}
                    \setlength{\leftmargin}{\listindent}
                    \setlength{\partopsep}{\medskipamount} } }
   {\end{list}}
\newenvironment{enumprop}
   {\begin{list}{\enumroman{enumiv}}{\usecounter{enumiv}
                      \setlength{\topsep}{0.1ex plus0.1ex minus0.1ex}
                      \setlength{\itemsep}{0.1ex plus0.1ex minus0.1ex}
                      \setlength{\parsep}{0.1ex plus0.1ex minus0.1ex}
                      \setlength{\itemindent}{0in}
                      \setlength{\labelwidth}{2.5em}
                      \setlength{\leftmargin}{2\listindent}
                      \setlength{\partopsep}{\medskipamount} } }
   {\end{list}}
\newenvironment{enumsrm}
   {\begin{list}{\enumroman{enumiv}}{\usecounter{enumiv}
                      \setlength{\topsep}{0.1ex plus0.1ex minus0.1ex}
                      \setlength{\itemsep}{0.1ex plus0.1ex minus0.1ex}
                      \setlength{\parsep}{0.1ex plus0.1ex minus0.1ex}
                      \setlength{\itemindent}{0in}
                      \setlength{\labelwidth}{2.5em}
                      \setlength{\leftmargin}{2\listindent}
                      \setlength{\partopsep}{\medskipamount} } }
   {\end{list}}
\newcommand\varhspace[1][1em]{\hspace{0em plus#1 minus0.3em}}
\newcommand\proofpart[1]{\textit{#1}}
\newcommand\statement[1]{\proofpart{Statement \enum{#1}}}
\def\nepok{\overprint{\raise1.5ex\rlap{$\;\;\urcorner$}}}
\def\nwpok{\overprint{\raise2.5ex\llap{$\lrcorner\;\;$}}}
\def\sepbk{\overprint{\lower1.5ex\rlap{$\;\;\lrcorner$}}}
\newcommand\diagramiso{\hbox{\lower1.5ex\hbox{$_\sim$}}}
\theoremstyle{change} %
\newtheorem{defn}{Definition.}[subsection]
\newtheorem{lemma}[defn]{Lemma.}
\newtheorem{prop}[defn]{Proposition.}
\newtheorem{thm}[defn]{Theorem.}
\newtheorem{cor}[defn]{Corollary.}
\newenvironment{interlude}{\begin{trivlist}\setlength\topsep{\theorempreskipamount}\item}{\end{trivlist}}
\newenvironment{remark}{\begin{interlude}\noindent\textbf{Remark.\ }\nopagebreak}{\end{interlude}}
\newenvironment{remarks}{\begin{interlude}\noindent\textbf{Remarks.\ }\nopagebreak\begin{enumsrm}\nopagebreak}{\end{enumsrm}\end{interlude}}
\newenvironment{remarks*}{\begin{interlude}\noindent\textbf{Remarks.\ }\nopagebreak}{\end{interlude}}
\newenvironment{examples*}{\begin{interlude}\noindent\textbf{Examples.\ }\nopagebreak}{\end{interlude}}
\newenvironment{notation}{\begin{interlude}\noindent\textbf{Notation.\ }\nopagebreak}{\end{interlude}}
\newenvironment{proof}{\begin{interlude}\noindent\textit{Proof.}}{\qed\end{interlude}}
\newenvironment{proofof}[1]{\begin{interlude}\noindent\textit{Proof of}\ \textit{#1}.}{\qed\end{interlude}}
\newenvironment{proof*}{\begin{interlude}\noindent\noindent\textit{Proof.}}{\end{interlude}}
\newcommand\qedsymbol{$\square$}
\newcommand\qed{\nopagebreak\penalty10000\hbox{}\nobreak\penalty10000\nopagebreak\penalty10000\hfill\hbox{\qedsymbol}\par}
\newcommand{\Cech}{\v{C}ech}
\def\oldphi{\mathchar"11E}
\def\phi{\varphi}
\def\theta{\vartheta}
\def\epsilon{\varepsilon}
\def\sub{\subseteq}
\def\mathbb{\mathds}
\newcommand\bbN{{\mathbb{N}}}
\newcommand\bbZ{{\mathbb{Z}}}
\newcommand\bbQ{{\mathbb{Q}}}
\newcommand\bbG{{\mathbb{G}}}
\newcommand\bbQZ{\bbQ\mod\bbZ}
\DeclareMathOperator{\id}{id} %
\DeclareMathOperator{\dlog}{dlog} %
\DeclareMathOperator{\Lie}{Lie} %
\DeclareMathOperator{\coker}{coker} %
\DeclareMathOperator{\preim}{pre\kern0.13em im} %
\DeclareMathOperator{\preker}{pre\kern0.13em ker} %
\DeclareMathOperator{\precoker}{pre\kern0.13em coker} %
\renewcommand{\hom}{\operatorname{Hom}}
\DeclareMathOperator{\tot}{Tot} %
\DeclareMathOperator{\quot}{quot} %
\DeclareMathOperator{\spec}{Spec} %
\DeclareMathOperator{\res}{res} %
\newcommand\et{\textrm{\acute{e}t}}
\newcommand\fl{\textrm{f\/l}}
\newcommand\gp{\textrm{\rm GP}}
\newcommand\bp{\textrm{\rm BP}}
\newcommand\SP{\textrm{\rm SP}}
\def\iso{\cong}
\newcommand\frakm{\mathfrak m}
\newcommand\defeq{\coloneqq}
\newcommand\fedeq{\eqqcolon}
\newcommand\then{\Rightarrow}
\newcommand\set[2][auto]{
     \ifthenelse{\equal{#1}{auto}}{\left\lbrace}{\csname #1\endcsname\lbrace} #2 \ifthenelse{\equal{#1}{auto}}{\right\rbrace}{\csname #1\endcsname\rbrace} }
\renewcommand\mod{/}
\let\tensor\otimes
\newcommand{\indlim}{\operatorname*{\underrightarrow{\lim}}} %
\newcommand\longto{\longrightarrow}
\newcommand\isoto{\longto\!\!\!\!\!\!\!\!^\sim\,\,\,}
\newcommand\isoot{\mbox{$\hspace{8.5pt}\raise 3pt\hbox{$\sim$}\hspace{-18pt}\longleftarrow\hspace{3pt}$}\linebreak[0]}
\newcommand\into{\hookrightarrow}
\newcommand\longonto{\longto\hspace{-9.5pt}\to}
\newcommand\Longto[1]{\stackrel {#1}\longto}
\newcommand\Longot[1]{\stackrel {#1}\longleftarrow}
\newcommand\Longonto[1]{\stackrel {#1}\longonto}
\newcommand\calC{\mathcal C}
\newcommand\ext{\operatorname{Ext}}
\newcommand\tor{\operatorname{Tor}}
\newcommand{\HH}{{\rm H}} %
\newcommand{\RR}{{\rm R}} %
\newcommand{\LL}{{\rm L}} %
\newcommand{\KK}{{\rm K}} %
\newcommand{\cHH}{\check{\rm H}} %
\newcommand{\cGamma}{\check{\Gamma}} %
\newcommand\ssto{\Longrightarrow} 
\DeclareMathOperator{\DD}{D}
\newcommand\scrA{\mathscr A}
\newcommand\scrB{\mathscr B}
\newcommand\scrF{\mathscr F}
\newcommand\scrG{\mathscr G}
\newcommand\scrH{\mathscr H}
\newcommand\scrO{\mathscr O}
\newcommand\scrP{\mathscr P}
\newcommand\scrS{\mathscr S}
\DeclareMathOperator\shom{{\scrH\!\!\!\it o\!m}} %
\DeclareMathOperator\rhom{RHom}
\DeclareMathOperator\rshom{R\!{\scrH\!\!\!\it o\!m}}
\newcommand{\tensorl}{\tensor^\LL}
\def\eps{\varepsilon}
\newcommand\frakU{\mathfrak U}  %
\title{Cup Products and Pairings for Abelian Varieties}
\author{Klaus Loerke}
\date{2009}
\begin{document}

\maketitle

\begin{abstract}
Let $A_K$ be an abelian variety with semistable reduction over a strictly henselian field 
of positive characteristic with perfect residue class field. We show that there is a close 
connection between the pairings of Grothendieck, Bester/Bertapelle and Shafarevic. In particular,
we show that the pairing of Bester/Bertapelle can be used to describe the $p$-part of 
Grothendieck's pairing in the semistable reduction case, thus proving a conjecture of Bertapelle,
\cite{be03}.
\end{abstract}

\section{Introduction}\label{gp_intro}

Let $R$ be a discrete valuation ring with field of fractions $K$ and residue
class field $k$. We consider an abelian variety $A_K$ over $K$ with dual abelian
variety $A'_K$ and N\'eron models $A$ and $A'$, respectively. In \cite{sga7},
exp.\ IX, 1.2.1, Grothendieck constructed a canonical pairing
$\oldphi\times\oldphi'\to\bbQZ$ of groups of components $\oldphi$ of $A$ and
$\oldphi'$ of $A'$. \emph{Grothendieck's pairing} represents the obstruction to
extend the Poincar\'e bundle on $A_K\times_K A_K'$ to a Poincar\'e bundle on
$A\times_R A'$. It is conjectured that this pairing is perfect.
\label{gp_known_facts} Indeed, it can be shown that this pairing is perfect in
the following cases: 
$A_K$ has semistable reduction, \cite{We}, $R$ is of equal characteristic $0$,
\cite{sga7}, $R$ has mixed characteristic and a perfect residue class field, \cite{beg}, 
the residue class field $k$ is finite, \cite{mcc86}.
\par
If $k$ is not perfect, counterexamples can be found, \cite{bb}, corollary 2.5.
If $R$ is of equal characteristic $p\neq 0$ with infinite, perfect residue class
field, there are only partial results: If $A_K$ is of potentially multiplicative
reduction, then Grothendieck's pairing is perfect, \cite{Bo97}.  
\par
In this article, we focus on the open case of discrete valuations rings of equal
characteristic $p\neq 0$ with perfect residue class field $k$. Since the
formation of N\'eron models is compatible with unramified base change, without
loss of generality, we may assume that $R$ is complete and strictly henselian.
\par
In this setting, we enlighten the relation of Grothendieck's pairing to the
pairings of Bester and Shafarevic: The latter pairing 
\[
\SP\colon \HH^1(K,A_K)_{(p)}\times\pi_1(A')_{(p)}\longto\bbQZ
\]
is constructed in \cite{sh} for the prime-to-$p$-parts. For the
prime-to-$p$-parts, the relation of the pairings of Grothendieck and Shafarevic
is known and can be summarised in the following exact sequence of pairings:
\[
 0\longto 
\left(\begin{matrix}
        \text{Grothendieck's}\\ \text{pairing}
\end{matrix}\right)
\longto
\left(\begin{matrix}
        \text{cup product for}\\ \text{$A_{K,n}$ and $A'_{K,n}$}
\end{matrix}\right)
\longto
\left(\begin{matrix}
        \text{Shafarevic's}\\ \text{pairing}
\end{matrix}\right)
\longto 0.
\]
More precisely, there is the following commutative diagram with exact lines. The
inductive limits vary over all $n$ which are prime to $p$:
\begin{diagram}[width=.45in,tight]
0 & \rTo & (\oldphi_A)_{(p)}        & & \rTo & \indlim \HH^1(K,A_{K,n}) 
  & &\rTo & 
\HH^1(K,A_K)_{(p)} & \rTo & 0\phantom. \\
  &      & \dTo>\gp                 & &      & \dTo>\cup                               & &     & 
\dTo>\SP     \\
0 & \rTo & (\oldphi_{A'})_{(p)}^\ast & & \rTo & \indlim \HH^0(K,A'_{K,n})^\ast & & \rTo &
\pi_1(A')_{(p)}^\ast & \rTo & 0. %
\end{diagram}
The vertical morphisms are induced by Grothendieck's pairing, by the perfect cup
product (\cite{sga5}, I, 5.1.8) and by Shafarevic's pairing (cf.\ \cite{be03},
4.1, prop.\ 2 and theorem 2 together with remark 1 on p.\ 152).
\
\\
\par
The $p$-parts of the pairings of Grothendieck and Shafarevic are rather mysterious:
In order to extend Shafarevic's pairing to the $p$-parts  of $\HH^1(K,A_K)$ and $\pi_1(A')$ in the
case of good reduction, Bester constructed a pairing 
\[
 \HH^2_k(R,A_{p^n})\times\scrF(A'_{p^n})\longto\bbQZ
\]
of the kernels of $p^n$-multiplication of the N\'eron models of an abelian variety of good
reduction and its dual, cf.\ \cite{bes}. Here, $\HH^\ast_k$ denotes the functor of local cohomology
and $\scrF$ denotes a rather complicated functor constructed by Bester.
\par
In \cite{be03}, Bertapelle generalised Bester's pairing to the kernels of
$p^n$-multi\-plica\-tion of abelian varieties with semistable reduction. Using
this, she extended Shafarevic's pairing to the $p$-parts in the case of
semistable reduction.
\par
Surprisingly, if we replace the cup product in the above diagram by Bester's
pairing, there is a similar exact sequence of the $p$-parts of these pairings:
(we still have to assume that $A_K$ has semistable reduction)
\begin{equation}\label{pairing_seq}
0\longto 
\left(\begin{matrix}
        \text{pairing of $p$-parts of}\\ \text{groups of components}
\end{matrix}\right)
\longto
\left(\begin{matrix}
        \text{Bester's}\\ \text{pairing}
\end{matrix}\right)
\longto
\left(\begin{matrix}
        \text{Shafarevic's}\\ \text{pairing}
\end{matrix}\right)
\longto 0
\end{equation} 
This sequence, of course, has to be read like the above sequence of pairings. 
\par
The first pairing of this sequence is a perfect pairing of the $p$-parts of
groups of components of $A$ and $A'$. Bertapelle conjectured that this pairing
coincides with Grothendieck's pairing. We will prove this conjecture.
\par
In a final step, we show that the pairing of Shafarevic can be seen as a
higher-dimensional analogue of the pairing of Grothendieck. To make this
explicit, we generalise Bester's functor $\scrF$ to some kind of ``derived
functor''. In this view, the pairings of Grothendieck, Bester and Shafarevic
induce a duality of the homotopy and local cohomology of $A$. 
\
\\
\par
In order to prove the conjecture of A.\ Bertapelle, we need to gain a deep
understanding of Bester's pairing. Bester uses cup products in the derived
category to construct his pairing. Since there are no references to (derived)
cup products on arbitrary sites in the literature, we start by developing such a
theory in greater generality as for example in \cite{ec}, V, \S1, prop.\ 1.16.
\par
In the third section we use the theory of derived cup products to analyse
Bester's pairing and compare it to the pairing of Grothendieck. For this result
we use the technique of rigid uniformisation of abelian varieties: The duality
of $A_K$ and $A_K'$ induces a \emph{monodromy} pairing, which -- on the one hand
-- is compatible with the pairing of Grothendieck and -- on the other hand --
allows for a connection to cup products and thus, for a connection to the
pairing of Bester.

\section{Cup Products in the Derived Category}\label{cup_derived}

\subsection[Cup-Functors]{$\cup$-Functors}

Let $\scrS$ be a category of sheaves, e.\,g.\ the category of abelian sheaves on
a Grothen\-dieck topology. Under some hypothesis (cf.\ \cite{ec}, V, \S1, prop
1.16) one can extend a functorial pairing of left exact functors $f_\ast A\times
g_\ast B\to h_\ast(A\tensor B)$ to a \emph{cup product}
\[
\RR^rf_\ast A\times \RR^s g_\ast B\to \RR^{r+s}h_\ast (A\tensor B)
\]
of the derived functors for every $r,s\in\bbN$ which is compatible with the
connecting morphisms $d$ and such that for $r=s=0$ this is the pairing we
started with. It is known how to construct such cup products in the case of
\Cech\ cohomology using \Cech\ cocycles (\cite{ec}, V, \S 1, remark 1.19) and in
the case of group cohomology (\cite{Br}, V.3) using the Bar resolution. To
clarify these various cup products and their relations, we will develop a theory
of \emph{derived cup products} in arbitrary abelian categories $\scrA$. \par
When working with derived categories, we will mainly work in the derived
category of complexes that are bounded from below, $\DD^+\!\scrA$, that is, we
require that for every complex $X\in\DD^+\!\scrA$ there exists $n\in\bbZ$ such
that for all $i<n$ we have $\HH^i(X)=0$.
\par
In general, a left exact functor $f_\ast\colon\scrA\to\scrB$ does not extend
without more ado to the derived category. To remedy this defect, one constructs
the derived functor $\RR f_\ast$. This functor turns distinguished triangles
into distinguished triangles. Moreover, it is equipped with a functorial,
universal morphism $Q\circ f_\ast \to\RR f_\ast\circ Q$, for the canonical
quotient functor $Q\colon\KK^+\!\scrA\to\DD^+\!\scrA$, from the category of
complexes up to homotopy, which are bounded from below, to the derived category.
\par
Composition of functors has a simple description in the setting of derived
categories. If $f_\ast$ and $h_\ast$ are functors whose composition exists and
if $f_\ast$ turns injective objects into $h_\ast$-acyclic ones, the following 
fundamental equation holds:
\begin{equation}\label{comp_der}
\RR(h_\ast f_\ast)=\RR h_\ast \RR f_\ast.
\end{equation}
\par
Cup products involve tensor products\index{tensor product}; thus, we require
that our abelian categories are equipped with a tensor product with the usual
properties. More precisely, we require that $\scrA$ and $\scrB$ are
\emph{symmetric monoidal categories}\index{monoidal category} with respect to a
right-exact tensor product, e.\,g.\ \cite{MacL}, VII, sections 1 and 7.
\par
Since the main application of cup products is sheaf cohomology, we may not use
projective resolutions. However, it is sufficient to consider \emph{flat
objects}\index{flat object} and flat resolutions to construct the derived tensor
product: Let us call an object $P\in\scrA$ flat, if $P$ is acyclic for the
tensor product. In the category of abelian groups or abelian sheaves this is the
usual notion of flatness.
\par
It is clear that the derived category is again a symmetric monoidal category
with respect to the derived tensor product. It makes things easier, if we
require that the category $\scrA$ has finite $\tor$-dimension (or finite weak
dimension, as in \cite{ce})\index{Tor-dimension}\index{weak dimension}, i.\,e.\
we require that there exists $n\in\bbN$ such that every object has a flat
resolution of length $\leq n$. This ensures that the derived tensor product does
not leave the derived category of complexes that are bounded from below,
$\DD^+\!\scrA$. For details, see proposition \ref{finite_res}.
\par
Crucial to our construction is the notion of \emph{adjoint
functors}\index{adjoint functor}. Let us recall that a pair of functors
$(f^\ast,f_\ast)$ is called \emph{adjoint}, if there exists a functorial
isomorphism
\[
\hom(f^\ast A,B)\isoto\hom(A,f_\ast B)
\]
for every pair of objects, $A$, $B$. The property of adjointness has many useful
consequences: The right adjoint functor $f_\ast$ preserves all limits and the
left adjoint functor $f^\ast$ preserves all colimits (\cite{MacL}, V, 5, theorem
1); in particular, $f_\ast$ is left exact and $f^\ast$ is right exact. If
$f^\ast$ is even exact, then $f_\ast$ preserves injective objects. Moreover,
every pair of adjoint functors induces functorial adjunction
morphisms\index{adjunction morphism} $f^\ast f_\ast\to \id$ and $\id\to f_\ast
f^\ast$.
\par
When we speak of a functor $f\colon\scrA\to\scrB$ of monoidal categories, we do
not require that $f$ is compatible with the tensor product.

\begin{defn}[Cup-Pairing of Functors]\index{cup-pairing}\index{cap-pairing}\index{pairing of functors}
Let $f$, $g$ and $h$ be three functors of monoidal categories $\scrA\to\scrB$.
\begin{enumprop}
  \item A \emph{cup-pairing of functors} $f\cup g\to h$ is a functorial morphism
   \[
   f A\tensor g B\longto h(A\tensor B)
   \]
   for every pair of objects $A,B\in\scrA$.
  \item A \emph{cap-pairing of functors}\footnote{Although the cup-pairing or the cup-product is
     related to the cup-product of algebraic topology, we have chosen the notion of a
     ``cap-pairing'' for duality reasons. It has nothing to do with the cap-product of algebraic
     topology.} $h\to f\cap g$ is a functorial morphism 
    \[
   h(A\tensor B) \longto  fA\tensor gB
   \]
   for every pair of objects $A,B\in\scrA$.
\end{enumprop}
\end{defn}

If $f\cup g\to h$ is a cup-pairing, we call the corresponding functorial
morphism $f A\tensor gB\to h(A\tensor B)$ the \emph{cup-pairing morphism}.
Accordingly, we call the corresponding morphism of a cap-pairing the
\emph{cap-pairing morphism}. The central lemma in our further argumentation is
the following:

\begin{lemma}\label{pair_copair}
Let $f_\ast$, $g_\ast$ and $h_\ast$ be right adjoint to $f^\ast$, $g^\ast$ and $h^\ast$. Then
every cup-pairing of functors $f_\ast\cup g_\ast\to h_\ast$ induces a cap-pairing of functors
$h^\ast\to f^\ast\cap g^\ast$ and vice versa. These constructions are inverse to each other.
\end{lemma}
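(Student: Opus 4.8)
The plan is to realize the correspondence as the classical passage to \emph{mates} (conjugates) along the three adjunctions, built purely out of their unit and counit morphisms. I will write $\eta^f\colon\id\to f_\ast f^\ast$ and $\epsilon^f\colon f^\ast f_\ast\to\id$ for the adjunction morphisms of $(f^\ast,f_\ast)$, and likewise $\eta^g,\epsilon^g,\eta^h,\epsilon^h$ for the other two pairs.

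First I would pass from a cup-pairing to a cap-pairing. Given a cup-pairing morphism $c_{A,B}\colon f_\ast A\tensor g_\ast B\to h_\ast(A\tensor B)$, natural in $A,B\in\scrA$, I would form, for every $X,Y\in\scrB$, the composite in $\scrB$
\[
X\tensor Y\;\Longto{\eta^f_X\tensor\eta^g_Y}\;f_\ast f^\ast X\tensor g_\ast g^\ast Y\;\Longto{c_{f^\ast X,\,g^\ast Y}}\;h_\ast\!\bigl(f^\ast X\tensor g^\ast Y\bigr),
\]
and transpose it across the adjunction $h^\ast\dashv h_\ast$; explicitly, the cap-pairing morphism would be
\[
d_{X,Y}\;=\;\epsilon^h_{f^\ast X\tensor g^\ast Y}\circ h^\ast\!\bigl(c_{f^\ast X,\,g^\ast Y}\circ(\eta^f_X\tensor\eta^g_Y)\bigr)\colon\;h^\ast(X\tensor Y)\longto f^\ast X\tensor g^\ast Y.
\]
Naturality of $d$ in $X,Y$ follows from functoriality of $f^\ast,g^\ast,h^\ast$, from naturality of $\eta^f,\eta^g$ and of $c$, and from naturality of the adjunction transpose. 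In the opposite direction, from a cap-pairing morphism $d_{X,Y}\colon h^\ast(X\tensor Y)\to f^\ast X\tensor g^\ast Y$ I would build, for $A,B\in\scrA$, the composite in $\scrA$
\[
h^\ast(f_\ast A\tensor g_\ast B)\;\Longto{d_{f_\ast A,\,g_\ast B}}\;f^\ast f_\ast A\tensor g^\ast g_\ast B\;\Longto{\epsilon^f_A\tensor\epsilon^g_B}\;A\tensor B,
\]
and transpose it across $h^\ast\dashv h_\ast$ to get $c_{A,B}=h_\ast\!\bigl((\epsilon^f_A\tensor\epsilon^g_B)\circ d_{f_\ast A,g_\ast B}\bigr)\circ\eta^h_{f_\ast A\tensor g_\ast B}$, again natural in $A,B$ for the same reasons.

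The remaining point is that the two assignments are mutually inverse, and here I would argue as follows. The two transposes of the adjunction $h^\ast\dashv h_\ast$ are inverse bijections, so it suffices to compare the morphisms \emph{before} transposing. Running $c\mapsto d\mapsto c'$ and unravelling, the equality $c'=c$ should reduce, after a single application of the naturality of $\epsilon^h$, to
\[
h_\ast(\epsilon^f_A\tensor\epsilon^g_B)\circ c_{f^\ast f_\ast A,\,g^\ast g_\ast B}\circ(\eta^f_{f_\ast A}\tensor\eta^g_{g_\ast B})\;=\;c_{A,B}.
\]
Here I would use naturality of $c$ in both variables, applied to $\epsilon^f_A$ and $\epsilon^g_B$, to rewrite the left-hand side as $c_{A,B}\circ\bigl((f_\ast\epsilon^f_A\circ\eta^f_{f_\ast A})\tensor(g_\ast\epsilon^g_B\circ\eta^g_{g_\ast B})\bigr)$, and then finish with the triangle identities $f_\ast\epsilon^f_A\circ\eta^f_{f_\ast A}=\id$ and $g_\ast\epsilon^g_B\circ\eta^g_{g_\ast B}=\id$. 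The reverse round trip $d\mapsto c\mapsto d'$ is the formal dual of this, using instead the other triangle identities $\epsilon^f_{f^\ast X}\circ f^\ast\eta^f_X=\id$ and its analogue for $g$.

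I do not expect a genuine obstacle: the statement is purely $2$-categorical, and the only hypothesis I actually use beyond the three adjunctions is that $\tensor$ is a bifunctor on each of $\scrA$ and $\scrB$, so that $\eta^f_X\tensor\eta^g_Y$ and $\epsilon^f_A\tensor\epsilon^g_B$ make sense and are natural; the monoidal coherence data and the finiteness and exactness hypotheses of the surrounding discussion are irrelevant here. The main difficulty will be organizational rather than mathematical --- keeping the variances straight (the lower-star functors go $\scrA\to\scrB$ and the upper-star functors $\scrB\to\scrA$), tracking which of the three adjunctions supplies each unit or counit, and making sure that every naturality square invoked is precisely the one needed.
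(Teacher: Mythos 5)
Your construction is exactly the one the paper uses: compose the units $\id\to f_\ast f^\ast$ and $\id\to g_\ast g^\ast$ with the cup-pairing morphism and transpose along $h^\ast\dashv h_\ast$ to get the cap-pairing, and dually with the counits for the converse. The paper delegates the verification that the two assignments are mutually inverse to a ``lengthy calculation'' following Mac\,Lane, IV.1, proof of theorem 2(v); your explicit check --- reducing to a comparison before transposing, then using naturality of the pairing in both variables and the triangle identities --- is precisely that calculation, correctly carried out.
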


\begin{proof}
The adjunctions $\id\to f_\ast f^\ast$ and $\id\to g_\ast g^\ast$ induce
together with the pairing morphism the morphism $A\tensor B\to f_\ast f^\ast
A\tensor g_\ast g^\ast B\to h_\ast (f^\ast A\tensor g^\ast B)$. Since $h^\ast$
and $h_\ast$ are adjoint, this composition induces the cap-pairing morphism
$h^\ast(A\tensor B)\to f^\ast A\tensor g^\ast B$. By ``adjoint'' arguments,
every cap-pairing morphism induces a cup-pairing morphism of functors. A lengthy
calculation shows that these constructions are inverse to each other; it uses
arguments as in \cite{MacL}, IV, 1, proof of theorem 2, part (v).
\end{proof}

With these preparations, we can isolate a class of functors that admit cup-products:

\begin{defn}[$\cup$-functor]\index{$\cup$-functor}\index{pre-$\cup$-functor}\label{cup_fn}
Let $f_\ast\colon\scrA\to\scrB$ be a functor of monoidal abelian categories. It is called a 
\emph{pre-$\cup$-functor}, if
\begin{items}
  \item[$(\cup_1)$] $f_\ast$ has a left adjoint $f^\ast$ with the following properties:
  \item[$(\cup_2)$] $f^\ast$ is exact and preserves flat objects.
\end{items}
A pre-$\cup$-functor $f_\ast$ together with a cup-pairing $f_\ast\cup f_\ast\to f_\ast$ is called a
\emph{$\cup$-functor}. 
\end{defn}

In the case of a $\cup$-functor, we will omit the pairing in the notation. In
our applications, it will always be clear, which pairing the functor is equipped
with.

\begin{notation}
Whenever $f_\ast$ is a pre-$\cup$-functor, let us denote by $f^\ast$ its left adjoint.
\end{notation}

The $\cup$-functors we have defined enjoy the following properties: Due to
$(\cup_1)$, the functor $f_\ast$ is left exact; due to $(\cup_2)$, it preserves
injective objects. Hence, compositions of $\cup$-functors can be described by
the Grothendieck spectral sequence. Moreover, since $f^\ast$ is exact, it
coincides with its derived functor $\LL f^\ast$, i.\,e., by $X\mapsto f^\ast X$,
the functor $f^\ast$ defines a well-defined functor of derived categories.
\par
Let us continue with some general adjointness properties in the derived category:

\begin{prop}\label{der_adj0}\label{der_adj}\index{adjoint functor}
Let $f_\ast$ be a pre-$\cup$-functor with left adjoint $f^\ast$. Then, there are
the following canonical isomorphisms:
\begin{enumprop}
   \item $\rhom(f^\ast X,Y)\isoto\rhom(X,\RR f_\ast Y)$.
   \item $\hom(f^\ast X,Y)\isoto\hom(X,\RR f_\ast Y).$
\end{enumprop}
\end{prop}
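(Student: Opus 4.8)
The plan is to deduce both isomorphisms from the ordinary adjunction $\hom(f^\ast X, Y) \isoto \hom(X, f_\ast Y)$ by passing to the derived category, using that $f^\ast$ is exact (so it descends directly to $\DD^+$ by $X \mapsto f^\ast X$, as noted after Definition~\ref{cup_fn}) and that $f_\ast$ preserves injectives (by $(\cup_2)$ and the remark following Lemma~\ref{pair_copair}), so $\RR f_\ast$ is computed by injective resolutions. I would prove \enumprop-style statement (\textit{ii}) first and then (\textit{i}), since $\rhom$ in $\DD^+$ is a complex whose $0$-th cohomology (or rather whose global sections / the group of morphisms in the derived category) recovers $\hom$ in the derived category, and the higher pieces come from the same resolution-level computation; alternatively one proves (\textit{i}) at the level of complexes and obtains (\textit{ii}) by applying $\HH^0 R\Gamma$ or simply by taking $\hom_{\DD^+}(-, -[0])$. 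Concretely, I would choose an injective resolution $Y \to I^\bullet$ in $\scrA$ (using $\DD^+$), so that $\RR f_\ast Y = f_\ast I^\bullet$ with $f_\ast I^\bullet$ a complex of injectives in $\scrB$ (here is where preservation of injectives is used).

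The key steps, in order: (1) For (\textit{ii}), represent $\hom_{\DD^+}(f^\ast X, Y)$ as a colimit over quasi-isomorphisms; but more efficiently, since $f_\ast I^\bullet$ is a bounded-below complex of injectives, $\hom_{\DD^+(\scrB)}(f^\ast X, Y) = \hom_{\KK^+(\scrB)}(f^\ast X, f_\ast I^\bullet)$, i.e.\ homotopy classes of chain maps. (2) Apply the termwise ordinary adjunction $\hom_{\scrB}(f^\ast X^j, (f_\ast I^\bullet)^k) \cong \hom_{\scrA}(X^j, f_\ast (f_\ast I^\bullet)^k)$ — wait, more carefully: a chain map $f^\ast X \to f_\ast I^\bullet$ corresponds via adjunction (applied degreewise, and functoriality of the adjunction isomorphism ensures compatibility with differentials) to a chain map $X \to I^\bullet$, and chain homotopies correspond likewise; this gives $\hom_{\KK^+(\scrB)}(f^\ast X, f_\ast I^\bullet) \cong \hom_{\KK^+(\scrA)}(X, I^\bullet)$. (3) Since $I^\bullet$ is a bounded-below complex of injectives and $X \to I^\bullet$ need not start from $X$ quasi-isomorphic to $I^\bullet$, identify $\hom_{\KK^+(\scrA)}(X, I^\bullet) = \hom_{\DD^+(\scrA)}(X, Y)$. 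This yields (\textit{ii}). (4) For (\textit{i}), replace $X$ by a bounded-below complex and observe the same construction works in each degree $n$: $\rhom(f^\ast X, Y)$ has terms built from $\hom$ into $f_\ast I^\bullet$, and the termwise adjunction gives an isomorphism of complexes $\rhom(f^\ast X, Y) \cong \rhom(X, \RR f_\ast Y)$. Since $f^\ast$ is exact, no flat/projective resolution of $X$ is needed — $f^\ast$ of a complex is well-defined — which is exactly why $(\cup_2)$ (exactness of $f^\ast$) is invoked rather than just $(\cup_1)$.

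The main obstacle is the bookkeeping in step~(2): one must verify that the termwise adjunction isomorphism $\hom_\scrB(f^\ast(-), -) \cong \hom_\scrA(-, f_\ast(-))$, which is natural, actually carries chain maps to chain maps and chain homotopies to chain homotopies — i.e.\ that it is compatible with the differentials of $f^\ast X$ and $f_\ast I^\bullet$. This is a diagram chase using naturality of the adjunction in both variables, and it is exactly the kind of ``adjoint argument'' the paper already invoked in the proof of Lemma~\ref{pair_copair} (citing \cite{MacL}, IV, 1). A secondary point to be careful about is well-definedness in the derived category: one should check the constructed isomorphisms are independent of the chosen injective resolution $I^\bullet$ and natural in $X$ and $Y$, which follows from uniqueness of injective resolutions up to homotopy and functoriality of $\RR f_\ast$. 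I expect the whole argument to be short once these naturality verifications are organized — essentially, ``$\RR f_\ast$ is a right adjoint to the derived $f^\ast$ because $f^\ast$ is exact and $f_\ast$ preserves injectives,'' with the $\rhom$ version being the internal-hom enhancement of the same fact.
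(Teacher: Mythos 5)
Your argument is correct and is essentially the paper's own: the paper deduces \enum 1 by citing the composition formula \eqref{comp_der} for $\hom(f^\ast X,-)=\hom(X,-)\circ f_\ast$ (legitimate because $f^\ast$ is exact and $f_\ast$ preserves injectives), which is precisely the injective-resolution and termwise-adjunction computation you spell out, and then gets \enum 2 by taking $\HH^0$. The only blemish is bookkeeping in your steps (1)--(2): the correct identifications are $\hom_{\DD^+\!\scrA}(f^\ast X,Y)=\hom_{\KK^+\!\scrA}(f^\ast X,I\udot)\iso\hom_{\KK^+\!\scrB}(X,f_\ast I\udot)=\hom_{\DD^+\!\scrB}(X,\RR f_\ast Y)$, rather than the variants with $f_\ast I\udot$ misplaced in your text.
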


\begin{proof}
Since $f^\ast$ and $f_\ast$ are adjoint and $f_\ast$ preserves injective objects, the first
assertion is \eqref{comp_der}. Moreover, we have $\hom(X,Y)=\HH^0(\rhom(X,Y))$, hence \enum 2.
\end{proof}


We need the following simple lemma to prove our main theorem:

\begin{lemma}\label{cup_copair}
Let $f^\ast$, $g^\ast$ and $h^\ast$ be exact functors $\scrA\to\scrB$ with the
property that $g^\ast$ respects flat objects. Then every cap-pairing $h^\ast\to
f^\ast\cap g^\ast$ extends to a cap-pairing in the derived category, that is,
the cap-pairing induces a functorial morphism
\[
h^\ast(X\tensorl Y) \longto f^\ast X\tensorl g^\ast Y
\]
for complexes $X,Y\in\DD^+\!\scrA$.
\end{lemma}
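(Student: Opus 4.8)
The plan is to construct the morphism on the level of representatives by means of flat resolutions, and then to check that the construction descends to a well-defined and functorial morphism in $\DD^+\!\scrB$. First I would fix, for complexes $X,Y\in\DD^+\!\scrA$, flat resolutions $P\isoto X$ and $Q\isoto Y$ by bounded-below complexes of flat objects; this is possible since $\scrA$ has enough flats and finite $\tor$-dimension (proposition \ref{finite_res}). Because tensor products and finite direct sums of flat objects are again flat, the total complex $\tot(P\tensor Q)$ is a bounded-below complex of flat objects representing $X\tensorl Y$. Since $h^\ast$ is exact it is additive, hence commutes with the formation of total complexes of bounded-below double complexes, and it coincides with its derived functor; therefore $h^\ast(X\tensorl Y)$ is represented by $\tot\bigl(h^\ast(P\tensor Q)\bigr)$, the total complex of the double complex with terms $h^\ast(P^i\tensor Q^j)$.

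Next I would apply the given cap-pairing of functors termwise: for each pair of indices $(i,j)$ it yields a morphism $h^\ast(P^i\tensor Q^j)\longto f^\ast P^i\tensor g^\ast Q^j$, and functoriality of the cap-pairing in both arguments makes these morphisms compatible with the differentials of $P$ and of $Q$, so that (with the usual sign conventions) they assemble into a morphism of double complexes $h^\ast(P\tensor Q)\longto f^\ast P\tensor g^\ast Q$. Passing to total complexes gives $\tot\bigl(h^\ast(P\tensor Q)\bigr)\longto\tot(f^\ast P\tensor g^\ast Q)$. It then remains to identify the target with $f^\ast X\tensorl g^\ast Y$: since $g^\ast$ preserves flat objects, $g^\ast Q$ is a bounded-below complex of flats and may be used to compute the derived tensor product; since $f^\ast$ is exact, $f^\ast P\to f^\ast X$ is a quasi-isomorphism, and tensoring it with the complex of flats $g^\ast Q$ remains a quasi-isomorphism, so $\tot(f^\ast P\tensor g^\ast Q)\isoto f^\ast X\tensorl g^\ast Y$ in $\DD^+\!\scrB$. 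It is exactly here that the asymmetric hypothesis ``$g^\ast$ respects flat objects'' is used. Composing yields the desired morphism $h^\ast(X\tensorl Y)\longto f^\ast X\tensorl g^\ast Y$.

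Finally I would check that this morphism is independent of the chosen resolutions and natural in $X$ and $Y$. Any two flat resolutions of a fixed complex are linked by a quasi-isomorphism of complexes of flats, unique up to homotopy, and a morphism in $\DD^+\!\scrA$ lifts to a morphism of chosen flat resolutions, again unique up to homotopy; since the termwise cap-pairing is natural with respect to morphisms of complexes and sends homotopies to homotopies, the whole construction is compatible with these comparisons, so the resulting morphism is well-defined in $\DD^+\!\scrB$ and functorial in the pair $(X,Y)$. For complexes concentrated in degree zero it evidently reduces to the original cap-pairing $h^\ast(A\tensor B)\to f^\ast A\tensor g^\ast B$. I expect the main obstacle to be precisely this last step of bookkeeping: tracking the signs in the total complexes and the homotopies that govern the comparison of flat resolutions. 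Conceptually the construction is essentially forced once one decides to work with bounded-below flat resolutions in $\DD^+\!\scrA$.
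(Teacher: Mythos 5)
Your proof is correct and follows essentially the same route as the paper: replace by a bounded-below flat resolution, apply the cap-pairing termwise, and use that $g^\ast$ preserves flats to identify the target with $f^\ast X\tensorl g^\ast Y$. The only difference is that the paper resolves just $Y$ (so $h^\ast(X\tensorl Y)=h^\ast(X\tensor P)\to f^\ast X\tensor g^\ast P=f^\ast X\tensorl g^\ast Y$ in one line), whereas you resolve both variables and then need the extra quasi-isomorphism $\tot(f^\ast P\tensor g^\ast Q)\isoto f^\ast X\tensorl g^\ast Y$; both are fine.
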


\begin{proof}
Let $P\to Y$ be a quasi-isomorphism with a complex $P$ consisting of flat
objects. Since $g^\ast$ respects flat objects, we have morphisms
$h^\ast(X\tensorl Y)= h^\ast(X\tensor P)\to f^\ast X\tensor g^\ast P = f^\ast
X\tensorl g^\ast Y$.
\end{proof}

\begin{thm}[Cup product]\label{der_cup_prod}\index{cup product}
\varhspace Let $f_\ast\cup g_\ast\to h_\ast$ be a cup-pairing of pre-$\cup$-functors
$\scrA\to\scrB$.
\begin{enumprop}
  \item There exists a cup-pairing $\RR f_\ast \cup\RR g_\ast \to\RR h_\ast$, i.\,e.\ there exists
  a functorial morphism, the \emph{derived cup product},
  \[
  \RR f_\ast X\tensorl \RR g_\ast Y\longto\RR h_\ast(X\tensorl Y),
  \]
  for complexes $X$ and $Y$ in the derived category. 
  \item This morphism induces the \emph{cup product}
  \[
   \cup\colon\RR^r f_\ast A\tensor\RR^sg_\ast B\longto\RR^{r+s}h_\ast (A\tensor B)
   \]
   for objects $A,B\in\scrA$ and every pair of integers $r,s\in\bbN$, with the following
   properties: 
   \item for $r=s=0$, this is the pairing of functors we started
   with and it is compatible with exact sequences in the following sense:
   \item Let $0\to A'\to A\to A''\to 0$ be an exact sequence in $\scrA$ let $B\in\scrA$ be
    another object. There is a commutative diagram
    \begin{diagram}[width=0.7in,tight,LaTeXeqno]\label{cup_exact_seq}
    \RR^rf_\ast A'' \tensor \RR^sg_\ast B & \rTo & \RR^{r+s} h_\ast(A''\tensor B) \\
    \dTo                                 &      & \dTo \\
    \RR^{r+1}f_\ast A' \tensor \RR^sg_\ast B & \rTo & \RR^{r+s+1} h_\ast(A'\tensor B)
    \end{diagram}
   and a diagram, which commutes up to sign, with interchanged roles of the exact sequence and $B$. 
\end{enumprop}
By the properties \enum 3 and \enum 4, the cup product is uniquely determined.
\end{thm}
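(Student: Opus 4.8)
The plan is to build everything from the two preceding lemmas together with the derived adjunction of Proposition~\ref{der_adj}. For part \enum1, first apply Lemma~\ref{pair_copair} to turn the given cup-pairing $f_\ast\cup g_\ast\to h_\ast$ into a cap-pairing $h^\ast\to f^\ast\cap g^\ast$, i.e.\ functorial morphisms $h^\ast(A\tensor B)\to f^\ast A\tensor g^\ast B$. Since $f^\ast$, $g^\ast$, $h^\ast$ are exact and preserve flat objects by $(\cup_2)$, Lemma~\ref{cup_copair} extends this to a functorial morphism $h^\ast(U\tensorl V)\to f^\ast U\tensorl g^\ast V$ for $U,V\in\DD^+\!\scrB$. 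Now, given $X,Y\in\DD^+\!\scrA$, substitute $U=\RR f_\ast X$ and $V=\RR g_\ast Y$ (these lie in $\DD^+\!\scrB$, and their derived tensor product stays in $\DD^+$ because $\scrB$ has finite $\tor$-dimension), and postcompose with the derived tensor product of the adjunction counits $f^\ast\RR f_\ast X\to X$ and $g^\ast\RR g_\ast Y\to Y$ furnished by Proposition~\ref{der_adj}\enum2 (the morphisms corresponding to the identities under the adjunction isomorphisms). This produces a morphism $h^\ast(\RR f_\ast X\tensorl\RR g_\ast Y)\to X\tensorl Y$ in $\DD^+\!\scrA$, and one last application of the adjunction isomorphism of Proposition~\ref{der_adj}\enum2 for $h$ transports it to the desired derived cup product $\RR f_\ast X\tensorl\RR g_\ast Y\to\RR h_\ast(X\tensorl Y)$. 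Functoriality is inherited from all the ingredients.

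For part \enum2, specialise $X=A$, $Y=B$ concentrated in degree $0$ and pass to cohomology: compose $\HH^{r+s}$ of the derived cup product with the canonical K\"unneth-type morphism $\HH^r(\RR f_\ast A)\tensor\HH^s(\RR g_\ast B)\to\HH^{r+s}(\RR f_\ast A\tensorl\RR g_\ast B)$ (built from a flat resolution of one factor) and with the map $\HH^{r+s}(\RR h_\ast(A\tensorl B))\to\HH^{r+s}(\RR h_\ast(A\tensor B))=\RR^{r+s}h_\ast(A\tensor B)$ induced by the canonical morphism $A\tensorl B\to A\tensor B$. For property \enum3 one unwinds the degree-$0$ part: since $f_\ast,g_\ast,h_\ast$ are left exact one has $\HH^0(\RR f_\ast A)=f_\ast A$ etc., $\HH^0(A\tensorl B)=A\tensor B$, and the derived adjunction isomorphisms reduce in degree $0$ to the underived ones; chasing through, the resulting morphism $f_\ast A\tensor g_\ast B\to h_\ast(A\tensor B)$ is exactly the cap-to-cup construction of Lemma~\ref{pair_copair} applied to our cap-pairing, hence — by the inversion statement of that lemma — the pairing we started with.

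For property \enum4 the point is that every step in the construction respects the triangulated structure: a short exact sequence $0\to A'\to A\to A''\to 0$ yields a distinguished triangle in $\DD^+\!\scrA$, hence distinguished triangles after applying $-\tensorl B$, $\RR f_\ast(-)$ and $\RR h_\ast(-\tensor B)$ with connecting morphisms that intertwine with the derived cup product (Lemma~\ref{cup_copair} and the adjunctions being triangulated); taking $\HH^\bullet$ gives the commuting square \eqref{cup_exact_seq}, and likewise with the roles of the sequence and $B$ exchanged, the anticommutativity there being the Koszul sign introduced by the symmetry of the derived tensor product under the degree shift. Uniqueness, finally, is a d\'evissage: inductively on $r+s$ one reduces to the case $r=s=0$ of property \enum3, embedding $A$ (resp.\ $B$) into an injective object so that the relevant higher derived functors vanish and the compatibility squares pin down the pairing in the next bidegree; this is the argument of \cite{ec}, V, \S1, 1.16.

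The main obstacle is the bookkeeping for property \enum3 — checking that the repeated adjunction-juggling returns the original pairing and not some twist of it — together with the sign tracking in the interchanged square of \enum4 and, throughout, the careful distinction between $\tensor$ and $\tensorl$ (where finite $\tor$-dimension keeps us inside $\DD^+\!\scrA$); the uniqueness d\'evissage additionally needs the resolving objects chosen so that the tensored sequences stay exact.
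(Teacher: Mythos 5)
Your proposal is correct and follows essentially the same route as the paper: convert the cup-pairing to a cap-pairing via Lemma~\ref{pair_copair}, extend it to the derived category with Lemma~\ref{cup_copair}, transport back with the adjunction counits and Proposition~\ref{der_adj} (which is exactly the ``vice versa'' direction of Lemma~\ref{pair_copair} at the derived level), then obtain the bigraded cup product via the K\"unneth edge morphism $\alpha$ and the map $\tensorl\to\tensor$, with the same triangle argument for \enum4 and the same dimension-shifting d\'evissage for uniqueness (the paper uses an $f_\ast$-acyclic object where you use an injective one, which is a harmless strengthening).
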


\begin{proof}
The cup-pairing of $f_\ast\cup g_\ast\to h_\ast$ induces a cap-pairing
$h^\ast\to f^\ast\cap g^\ast$ by lemma \ref{pair_copair}. Using lemma
\ref{cup_copair}, this cap-pairing extends to the derived category. By lemma
\ref{pair_copair}, the derived cap-pairing induces the cup-pairing $\RR
f_\ast\cup \RR g_\ast\to \RR h_\ast$. This is the derived cup product. Let us
now take the $(r+s)$-th cohomology of the derived cup product. Together with the
canonical morphism $A\tensorl B\to A\tensor B$ this induces
\[
\HH^{r+s}(\RR f_\ast A\tensorl \RR g_\ast B)\longto \RR^{r+s} h_\ast(A\tensorl
B)\longto\RR^{r+s}h_\ast(A\tensor B).
\]
There exists a canonical morphism $\alpha$, \cite{ce}, IV, \S6, proposition 6.1:
\begin{equation}\label{morph_alpha}
\alpha\colon\RR^rf_\ast A\tensor\RR^sg_\ast B\longto\HH^{r+s}(\RR f_\ast A\tensorl \RR g_\ast B).
\end{equation}
The composition of those morphisms is the cup product. This is the construction
of the cup product. Now, let us prove its properties: For $r=s=0$, we show that
this morphism coincides with the cup-pairing we started with: Let $A,B\in\scrA$
and consider the following commutative diagram in the derived category
\begin{diagram}[width=0.7in,tight]
h^\ast(f_\ast A\tensor g_\ast B)  & \rTo & f^\ast f_\ast A\tensor g^\ast g_\ast B  & \rTo & A\tensor B \\
\uTo                             &      & \uTo                                    &      & \uTo \\
h^\ast(f_\ast A\tensorl g_\ast B) & \rTo & f^\ast f_\ast A\tensorl g^\ast g_\ast B & \rTo &
A\tensorl B\phantom. \\
\dTo                             &      & \dTo                                    &      & \dEqual \\
h^\ast(\RR f_\ast A\tensorl \RR g_\ast B) & \rTo & f^\ast \RR f_\ast A\tensorl g^\ast \RR g_\ast B & \rTo & A\tensorl B. %
\end{diagram}
It is induced by the canonical morphisms ``$\tensorl \to \tensor$'' and $f_\ast
\to \RR f_\ast$ (and for $g_\ast$ accordingly). Again, we use the fact that the
exact functors $f^\ast$, $g^\ast$ and $h^\ast$ define functors on the level of
derived categories. Since $\RR h_\ast$ is right adjoint to $h^\ast$, this
diagram induces
\begin{diagram}[width=0.7in,tight]
f_\ast A \tensor g_\ast B  & \rTo & \RR h_\ast (A\tensor B) \\
\uTo>\phi                  &      & \uTo \\
f_\ast A \tensorl g_\ast B & \rTo & \RR h_\ast (A\tensorl B)\phantom. \\
\dTo                       &      & \dEqual \\
\RR f_\ast A \tensorl \RR g_\ast B & \rTo & \RR h_\ast (A\tensorl B). %
\end{diagram}
If we take the $0$-th cohomology, the morphism $\phi$ induces an isomorphism,
and we see that for $r=s=0$ the cup product morphism of \enum 2 coincides with
the cup-pairing we started with. Finally, we show that the cup product is
compatible with exact sequences. Every exact sequence induces a triangle $A'\to
A\to A''\to A'[1]$. This triangle induces the following morphism of triangles:
\begin{diagram}[width=0.7in,tight]
\RR f_\ast A'\tensorl\RR g_\ast B & \rTo & \RR h_\ast(A'\tensorl B) \\
\dTo                              &      & \dTo \\
\RR f_\ast A\tensorl\RR g_\ast B & \rTo & \RR h_\ast(A\tensorl B) \\
\dTo                              &      & \dTo \\
\RR f_\ast A''\tensorl\RR g_\ast B & \rTo & \RR h_\ast(A''\tensorl B) \\
\dTo                              &      & \dTo \\
\RR f_\ast A'[1]\tensorl\RR g_\ast B & \rTo & \RR h_\ast(A'[1]\tensorl B).
\end{diagram}
By means of the morphism $\alpha$ and the canonical morphism $``\tensorl\to\tensor$'', the last
square induces a diagram 
\begin{diagram}[width=0.7in,tight]
\RR^rf_\ast A'' \tensor \RR^sg_\ast B & \rTo & \RR^{r+s} h_\ast(A''\tensor B) \\
\dTo                                 &      & \dTo \\
\RR^{r+1}f_\ast A' \tensor \RR^sg_\ast B & \rTo & \RR^{r+s+1} h_\ast(A'\tensor B).
\end{diagram}
To prove that the cup product is uniquely determined by these properties,
consider a sequence $0\to A\to I\to Z\to 0$ with an $f_\ast$-acyclic object $I$.
Since $\RR^rf_\ast I=0$ for $r\geq 1$, the left vertical morphism is bijective
for $r\geq 1$ and surjective for $r=0$. Inductively, we can infer that the cup
product morphisms are uniquely determined by property \enum 3 and \enum 4. 
\end{proof}

\begin{remark}
In the category of sheaves, we will study another possibility to construct the
morphism $\alpha$. That is why we do not study the morphism $\alpha$ in detail,
here. In fact, the existence proof for $\alpha$, which is given in \cite{ce}, is
categorial.
\end{remark}

The proof of the theorem shows that the cup product is a natural morphism. This
fact is not visible if one constructs cup products as in \cite{ec}, V, \S1,
proposition 1.16. Moreover, the fundamental properties of cup products as in
\enum 3 and \enum 4 turn out to be formal properties of a morphism in the
derived category. Finally, we show that composition of $\cup$-functors is
compatible with composition of cup products:

\begin{prop}\label{cup_comp0}\index{composition!of functors}\index{composition!of cup products}
Let $h_\ast$, $f_\ast$ and $g_\ast$ be $\cup$-functors with $g_\ast=h_\ast
f_\ast$. If their cup-pairing morphisms are compatible, i.\,e., if the canonical
diagram
\begin{diagram}[midshaft,LaTeXeqno]\label{precup_compose0}
g_\ast A\tensor g_\ast B                   & \rTo &                                    & &
g_\ast(A\tensor B) \\
\dEqual                                    &      &                                    &      & \dEqual \\
(h_\ast f_\ast A)\tensor (h_\ast f_\ast B) & \rTo & h_\ast (f_\ast A \tensor f_\ast B) & \rTo & h_\ast f_\ast  (A\tensor B) %
\end{diagram}
commutes, then the cup-products of $\RR h_\ast$, $\RR f_\ast$ and $\RR g_\ast$
are compatible, that is the following diagram commutes:
\begin{diagram}[midshaft,l>=0.3in]
\RR g_\ast X \tensorl \RR g_\ast Y                               &               &
\rTo^{\cup_g} &
              & \RR g_\ast(X\tensorl Y) & \qquad \\
\dEqual                                                          &               &                 &               & \dEqual \\
(\RR h_\ast\RR f_\ast X) \tensorl (\RR h_\ast\RR f_\ast Y) & \rTo^{\cup_h} & \RR h_\ast(\RR f_\ast
X\tensorl \RR f_\ast Y) & \rTo^{\RR h_\ast\cup_f} & \RR h_\ast\RR f_\ast(X\tensorl Y).
\end{diagram}
\qed
\end{prop}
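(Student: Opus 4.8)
The plan is to retrace the three steps by which the derived cup product is built in the proof of theorem~\ref{der_cup_prod}, and to check that each of them respects composition of functors. Say $f_\ast\colon\scrA\to\scrB$, $h_\ast\colon\scrB\to\scrC$ and $g_\ast=h_\ast f_\ast\colon\scrA\to\scrC$, and write $f^\ast$, $h^\ast$, $g^\ast$ for the left adjoints; since adjoints compose contravariantly we have $g^\ast=f^\ast h^\ast$, which by $(\cup_2)$ is again exact and flat-preserving, so that $g_\ast$ is indeed a pre-$\cup$-functor. Let $\kappa_f\colon f^\ast(A\tensor B)\to f^\ast A\tensor f^\ast B$, and likewise $\kappa_h$ and $\kappa_g$, denote the cap-pairing morphisms associated with the three cup-pairings by lemma~\ref{pair_copair}.

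The first step is to show that the commutativity of \eqref{precup_compose0} is equivalent to the single identity $\kappa_g=\kappa_f\circ f^\ast(\kappa_h)$ of morphisms $f^\ast h^\ast(A\tensor B)\to f^\ast h^\ast A\tensor f^\ast h^\ast B$ -- in other words, that the bijection of lemma~\ref{pair_copair} between cup-pairings and cap-pairings is compatible with composition of functors. I would prove this by the same kind of diagram chase as the ``lengthy calculation'' cited in the proof of lemma~\ref{pair_copair}: expand the unit $\id\to g_\ast g^\ast$ of the composite adjunction as $\id\to h_\ast h^\ast\to h_\ast f_\ast f^\ast h^\ast$, feed in the factorisation of the cup-pairing of $g_\ast$ coming from \eqref{precup_compose0}, and use the naturality of the cup-pairing morphisms together with the triangle identities to peel off $\kappa_h$ and then $\kappa_f$ before transposing back along $g^\ast\dashv g_\ast$. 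I expect this to be the main obstacle: it is the only point where a genuine computation is required, though not a deep one.

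The second step is to pass to the derived category. By $(\cup_2)$ the functors $f^\ast$, $h^\ast$ and $g^\ast=f^\ast h^\ast$ are exact and preserve flat objects, so lemma~\ref{cup_copair} extends each of $\kappa_f$, $\kappa_h$, $\kappa_g$ to a morphism in the derived category. Given $X,Y\in\DD^+\!\scrC$, I would fix a single quasi-isomorphism $P\to Y$ with $P$ a complex of flat objects; then $h^\ast P\to h^\ast Y$ is again such a quasi-isomorphism (exactness of $h^\ast$), so all three derived cap-pairings can be computed from this one resolution, and applying the identity of the first step term by term to the complex $X\tensor P$ yields at once the derived analogue $\kappa_g=\kappa_f\circ f^\ast(\kappa_h)$, now for morphisms $f^\ast h^\ast(X\tensorl Y)\to f^\ast h^\ast X\tensorl f^\ast h^\ast Y$ in $\DD^+\!\scrA$.

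The third step is to translate back. Because the $\cup$-functor $f_\ast$ sends injectives to injectives, hence to $h_\ast$-acyclics, equation~\eqref{comp_der} gives $\RR g_\ast=\RR h_\ast\RR f_\ast$; combined with proposition~\ref{der_adj} this exhibits $f^\ast$, $h^\ast$, $f^\ast h^\ast$ as left adjoints of $\RR f_\ast$, $\RR h_\ast$, $\RR g_\ast$ in the derived categories. Applying lemma~\ref{pair_copair} to these derived functors and invoking the composition-compatibility of the first step once more -- this time in the derived setting, where the cap-pairings in play are exactly the extended $\kappa_f$, $\kappa_h$, $\kappa_g$ of the second step -- the derived identity just obtained is turned into the commutativity of the asserted square, i.e.\ into the compatibility of $\cup_g$, $\cup_h$ and $\RR h_\ast\cup_f$. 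Since the statement lives entirely at the level of $\tensorl$, the morphism $\alpha$ of \eqref{morph_alpha} does not intervene.
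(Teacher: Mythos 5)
Your proposal is correct and is exactly the argument the paper intends: proposition~\ref{cup_comp0} is stated with a bare \qedsymbol\ because it is meant to follow formally by unwinding the three-step construction of theorem~\ref{der_cup_prod} (cup-pairing $\to$ cap-pairing $\to$ derived cap-pairing $\to$ derived cup-pairing), which is precisely what you do. You also correctly isolate the one point of genuine content --- that the bijection of lemma~\ref{pair_copair} intertwines composition of cup-pairings with composition of cap-pairings, both before and after deriving --- and your sketch of that diagram chase (units of the composite adjunction, naturality, triangle identities) is the standard and adequate way to carry it out.
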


\begin{cor}
Let $h_\ast$ and $f_\ast$ be $\cup$-functors. If the composition $g_\ast\defeq h_\ast f_\ast$
exists, then this composition is a $\cup$-functor and it is compatible with cup products, i.\,e.,
the conclusion of the above proposition holds.
\end{cor}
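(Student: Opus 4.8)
The plan is to build the $\cup$-functor structure on $g_\ast$ from those on $f_\ast$ and $h_\ast$ by composition, and then to reduce the compatibility statement to Proposition~\ref{cup_comp0}. The phrase ``the composition exists'' just means that the codomain of $f_\ast$ agrees with the domain of $h_\ast$ as monoidal abelian categories; say $f_\ast\colon\scrA\to\scrB$ and $h_\ast\colon\scrB\to\scrC$, so that $g_\ast\colon\scrA\to\scrC$.

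First I would check that $g_\ast=h_\ast f_\ast$ is a pre-$\cup$-functor in the sense of Definition~\ref{cup_fn}. For $(\cup_1)$ the candidate left adjoint is $g^\ast\defeq f^\ast h^\ast$: composing the two adjunction isomorphisms gives $\hom(f^\ast h^\ast X,Y)\isoto\hom(h^\ast X,f_\ast Y)\isoto\hom(X,h_\ast f_\ast Y)=\hom(X,g_\ast Y)$, naturally in $X$ and $Y$, so $(g^\ast,g_\ast)$ is an adjoint pair. For $(\cup_2)$, both $f^\ast$ and $h^\ast$ are exact and preserve flat objects by hypothesis; a composite of exact functors is exact, and a composite of flat-preserving functors preserves flat objects, so $g^\ast=f^\ast h^\ast$ has the required properties.

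Next I would equip $g_\ast$ with a cup-pairing $g_\ast\cup g_\ast\to g_\ast$: apply the cup-pairing morphism of $h_\ast$ to the objects $f_\ast A,f_\ast B\in\scrB$, and then apply $h_\ast$ to the cup-pairing morphism of $f_\ast$, obtaining
\[
g_\ast A\tensor g_\ast B = h_\ast f_\ast A\tensor h_\ast f_\ast B\longto h_\ast(f_\ast A\tensor f_\ast B)\longto h_\ast f_\ast(A\tensor B)=g_\ast(A\tensor B),
\]
which is functorial in $A$ and $B$ since each of the two arrows is. Hence $g_\ast$ together with this pairing is a $\cup$-functor, which is the first assertion of the Corollary.

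Finally I would observe that, with this choice, the square~\eqref{precup_compose0} commutes tautologically: its bottom composite is by definition the top arrow $\cup_g$ under the identifications $g_\ast=h_\ast f_\ast$. Thus $f_\ast$, $g_\ast$, $h_\ast$ are $\cup$-functors with $g_\ast=h_\ast f_\ast$ and compatible cup-pairing morphisms, so Proposition~\ref{cup_comp0} applies verbatim and delivers the stated compatibility of the derived cup products $\cup_f$, $\cup_g$, $\cup_h$. I do not expect a genuine obstacle: the only point beyond bookkeeping is the verification of $(\cup_2)$ for $g^\ast$, and the real content of the statement — commutativity of the large diagram of derived functors — has already been established in Proposition~\ref{cup_comp0}.
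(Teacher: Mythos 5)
Your proof is correct and follows the same route as the paper: take $g^\ast=f^\ast h^\ast$ as the exact, flat-preserving left adjoint, define the cup-pairing of $g_\ast$ as the composite in diagram \eqref{precup_compose0}, and invoke Proposition \ref{cup_comp0}. You merely spell out the adjunction composition and the tautological commutativity that the paper leaves implicit.
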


\begin{proof}
Since $h_\ast$ and $f_\ast$ have exact left adjoints $f^\ast$ and $h^\ast$, the
composition $g_\ast$ has an exact left adjoint $g^\ast\defeq f^\ast h^\ast$,
which clearly respects flat objects. The cup pairings for $h_\ast$ and $f_\ast$
induce a cup pairing for $g_\ast$ by diagram \eqref{precup_compose0}.
\end{proof}

In the case that the cup-pairing-morphism of the composition $g_\ast=h_\ast
f_\ast$ is compatible with the cup-pairing morphisms of the functors $h_\ast$
and $g_\ast$, we will say that the cup-pairings of $h_\ast$, $f_\ast$ and
$g_\ast$ are compatible. As an important computational tool we can construct the
following pairing of spectral sequences:

\begin{cor}\label{cup_comp}\index{Grothendieck spectral sequence}
Let $h_\ast$, $f_\ast$ and $g_\ast=h_\ast f_\ast$ be three $cup$-functors with compatible
cup-pairings. Then there is the following commutative diagram of pairings of spectral sequences:
\begin{diagram}[l>=0.4in]
\RR^{r_1} h_\ast\RR^{s_1} f_\ast A & \;\times\; & \RR^{r_2} h_\ast\RR^{s_2} f_\ast B & \rTo &
\RR^{r_1+r_2} h_\ast\RR^{s_1+s_2} f_\ast (A\tensor B) \\
\dImplies & & \dImplies && \dImplies \\
\RR^{r_1+s_1} g_\ast A & \times & \RR^{r_2+s_2} g_\ast B & \rTo & \RR^{r_1+s_1+r_2+s_2} g_\ast
(A\tensor B)
\end{diagram}
for objects $A$ and $B$, where the horizontal morphisms are induced by cup products. \qed
\end{cor}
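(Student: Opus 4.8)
The plan is to deduce this corollary from Proposition \ref{cup_comp0} by passing from the derived-category statement to the associated spectral sequences, exactly as one deduces the classical Grothendieck-spectral-sequence-with-products statement from the multiplicativity of the derived composition. The point is that $\RR g_\ast = \RR h_\ast\RR f_\ast$ by \eqref{comp_der}, since $f_\ast$ is a $\cup$-functor and hence (by $(\cup_1)$ and $(\cup_2)$) sends injectives to $h_\ast$-acyclics; so both the Grothendieck spectral sequence $\RR^{r}h_\ast\RR^{s}f_\ast A\Rightarrow \RR^{r+s}g_\ast A$ and the analogous one for $B$ and for $A\tensor B$ exist. What has to be produced is a pairing of these three spectral sequences induced by the cup products, compatible at the abutment with the cup product $\RR^\bullet g_\ast A\tensor \RR^\bullet g_\ast B\to \RR^\bullet g_\ast(A\tensor B)$ of Theorem \ref{der_cup_prod}.

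First I would recall the standard construction of the Grothendieck spectral sequence via Cartan--Eilenberg (or injective Cartan--Eilenberg) resolutions: choose a C.--E. resolution $A\to I^{\bullet\bullet}$ by $f_\ast$-injectives, apply $f_\ast$ to obtain a double complex computing $\RR f_\ast A$, then apply an injective resolution of that to feed into $h_\ast$; the filtration by rows/columns of the resulting double (triple) complex gives the spectral sequence. For the product, choose such resolutions $I$ for $A$, $J$ for $B$, and $K$ for $A\tensor B$, together with a map of C.--E. resolutions $I\tensor J\to K$ lifting $\id_{A\tensor B}$ (here one uses that tensor products of injective/flat resolutions behave well, i.e. the hypotheses that $f^\ast$, $h^\ast$ preserve flat objects and that $\scrA$ has finite $\tor$-dimension, which keep everything in $\DD^+$). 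The cup-pairing morphisms of $f_\ast$ and $h_\ast$, applied levelwise, give a morphism of (triple) complexes realizing the cup product at the level of resolutions; since a morphism of filtered complexes induces a morphism on every page of the associated spectral sequences and on the abutment, and since it is \emph{bi}additive rather than additive one gets a pairing of spectral sequences in the usual sense. The commutativity of \eqref{precup_compose0} is exactly what guarantees that the two ways of reading off the product — first $f_\ast$ then $h_\ast$, versus directly $g_\ast$ — agree, so that the induced pairing on the abutment is the cup product $\cup_g$ of Theorem \ref{der_cup_prod}.

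Alternatively, and more in the spirit of this paper, I would avoid resolutions altogether and argue at the level of the derived category: the two spectral sequences in the statement are, respectively, the spectral sequences of the composition of (derived) functors, and a pairing of spectral sequences is induced by any sufficiently compatible morphism in $\DD^+\!\scrB$ between the relevant objects, together with the filtrations coming from the canonical truncation filtrations / Postnikov towers of $\RR f_\ast X$, $\RR f_\ast Y$, $\RR f_\ast(X\tensorl Y)$. Proposition \ref{cup_comp0} provides precisely the compatible morphism: the commutative square there says that applying $\RR h_\ast$ to the cup product $\RR f_\ast X\tensorl\RR f_\ast Y\to\RR f_\ast(X\tensorl Y)$, and precomposing with $\cup_h$, yields $\cup_g$ under the identifications $\RR g_\ast = \RR h_\ast\RR f_\ast$. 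Filtering $\RR f_\ast X$ and $\RR f_\ast Y$ by truncations and observing that $\tensorl$ and $\RR h_\ast$ interact with these truncations in the controlled way dictated by finite $\tor$-dimension, one obtains a filtered morphism inducing the asserted pairing on every page; taking $E_2$-terms gives the displayed diagram, and taking abutments recovers $\cup_g$.

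The main obstacle, and the only genuinely delicate point, is the bookkeeping needed to make the product \emph{filtered}: one must check that the cup-product morphism respects the two filtrations (that it is ``filtered of bidegree $0$'' in the appropriate sense), so that it descends to each page $E_k$, and that the resulting map $E_2^{r_1,s_1}\tensor E_2^{r_2,s_2}\to E_2^{r_1+r_2,s_1+s_2}$ is the one induced by the $f_\ast$- and $h_\ast$-cup products — i.e. the top row of the displayed diagram. Everything else — the existence of $\RR h_\ast\RR f_\ast=\RR g_\ast$, the abutment being $\cup_g$, and the convergence of the spectral sequences — is either already in the excerpt (\eqref{comp_der}, the corollary preceding this one, Proposition \ref{cup_comp0}) or is the standard Grothendieck-spectral-sequence machinery, so I would state these filtration compatibilities, indicate that they follow from the explicit C.--E. double-complex model exactly as in the classical monograph treatments of products on Grothendieck spectral sequences, and close with \qed.
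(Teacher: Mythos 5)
Your proposal is correct and matches the paper's intent: the paper states this corollary with no proof at all (just \qed), treating it as a formal consequence of Proposition \ref{cup_comp0} together with the standard multiplicative structure of the Grothendieck spectral sequence, which is exactly the deduction you carry out. You correctly isolate the only nontrivial point — that the cup-product morphism is filtered so that it descends to every page with the stated $E_2$-term and abutment — and your Cartan--Eilenberg (or truncation-filtration) justification of it is the standard and adequate one.
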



\subsection{Flat Sheaves}\label{flat_sheaves}

In this section, we give a brief survey on flat sheaves and flat resolutions for
the category of abelian sheaves on an arbitrary Grothendieck topology. In
contrast to the exposition of flat sheaves e.\,g.\ in \cite{rd} or \cite{ks} we
will not use stalks. 
\par
 In the following, we will use the tensor product both in the category $\scrS$
of sheaves and in the category $\scrP$ of presheaves. Although we will use the
same notation, it will be clear in which category we are working and, therefore,
which tensor product we refer to. Remember that the sheaf tensor product is
obtained from the presheaf tensor product by sheafification: If
$s\colon\scrP\to\scrS$ denotes the sheafification functor and if
$i\colon\scrS\to\scrP$ denotes the inclusion functor, we have $A\tensor
B=s(iA\tensor iB)$ for sheaves $A$ and $B$.\index{tensor product!of
sheaves}\index{tensor product!of presheaves}
\par
To begin with, recall that an abelian group $P$ is flat if and only it is
torsion free (\cite{ca}, I, \S 2.5, prop. 3). This fact generalises to the
category of abelian sheaves and presheaves.

\begin{defn}[Torsion Free Sheaf]\index{torsion free sheaf}
\varhspace Let $A$ be a sheaf (or presheaf) of abelian groups. It is called
\emph{torsion free} if the multiplication morphism $n\colon A\to A$ is a
monomorphism for every integer $n\geq 1$.
\end{defn}

Obviously, a (pre)sheaf $A$ is torsion free if and only if the groups $A(U)$ are
torsion free for every $U$. This is the key to the following proposition:

\begin{prop}\label{sheaf_flat_tf}\index{flat sheaf}
Let $P$ be an abelian sheaf or a presheaf. Then $P$ is flat if and only if it is torsion free.
\end{prop}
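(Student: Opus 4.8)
The statement to prove is Proposition~\ref{sheaf_flat_tf}: an abelian sheaf or presheaf $P$ is flat (acyclic for the tensor product) if and only if it is torsion free. My plan is to reduce everything to the corresponding statement for abelian groups, namely that a $\bbZ$-module is flat iff it is torsion free, cited from \cite{ca}, together with the observation already recorded after the definition that $P$ is torsion free iff each $P(U)$ is torsion free. I would treat the presheaf case first, because there the tensor product is computed sectionwise and the argument is immediate, and then bootstrap to the sheaf case via sheafification.

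Let me sketch the steps. \emph{Step 1 (presheaves).} For presheaves the tensor product is defined objectwise: $(A\tensor B)(U)=A(U)\tensor_{\bbZ}B(U)$, and likewise the left-derived tensor product is computed objectwise, since a short exact sequence of presheaves is exact at every $U$. Hence $P$ is flat as a presheaf iff $P(U)\tensor_{\bbZ}(-)$ is exact for every $U$, iff each $P(U)$ is a flat (= torsion-free) abelian group, iff $P$ is torsion free. \emph{Step 2 (sheaves, the easy direction).} If $P$ is a flat sheaf, then for every short exact sequence $0\to A\to B$ of sheaves the sequence $0\to P\tensor A\to P\tensor B$ is exact; applying this to $0\to A\To{n}A$ for the sheaf $A=\bbZ$ (the constant/free sheaf on the point, or more carefully the sheafification of the constant presheaf) and using $P\tensor\bbZ\iso P$ shows $n\colon P\to P$ is a monomorphism, so $P$ is torsion free. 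Alternatively and more cleanly: $P$ torsion-freeness can be checked after evaluating, and flatness of $P$ forces each stalk — or, staying stalk-free as the section insists, each value of an injective resolution argument — but the direct ``tensor with $0\to\bbZ\To{n}\bbZ$'' argument is the shortest.

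\emph{Step 3 (sheaves, the substantive direction).} Suppose $P$ is a torsion-free sheaf; we must show $P$ is flat. Using $A\tensor B=s(iA\tensor iB)$ where $s$ is sheafification and $i$ the inclusion, and the fact that $i$ is exact (it is right adjoint to the exact functor $s$, or simply: a sequence of sheaves is exact iff it is exact as presheaves after applying $i$ — this is standard) while $s$ is also exact, we get that for a monomorphism of sheaves $A\into B$ the presheaf map $iP\tensor iA\to iP\tensor iB$ is injective by Step~1 (since $iP$ is a torsion-free presheaf: its values are the values of $P$, which are torsion-free abelian groups), and then applying the exact functor $s$ preserves injectivity, so $P\tensor A\to P\tensor B$ is injective. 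Thus $P\tensor(-)$ is exact, i.e.\ $P$ is flat. The only point needing care is the exactness of $i\colon\scrS\to\scrP$: a monomorphism of sheaves is a monomorphism of presheaves (kernels of sheaf maps are computed sectionwise), which is exactly what is used, and this is the sole place where one must be slightly careful rather than invoke a black box — so that is the ``main obstacle,'' though it is a mild one. With that in hand, the equivalence follows by combining Steps 2 and 3 for sheaves, and Step 1 disposes of the presheaf assertion.
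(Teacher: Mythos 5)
Your proposal is correct and follows essentially the same route as the paper: reduce to the torsion-free characterisation of flat abelian groups, note that the presheaf tensor product is computed sectionwise, and handle the sheaf case via $A\tensor B=s(iA\tensor iB)$ together with exactness of $s$ and the fact that $i$ preserves monomorphisms. One small caution: $i$ is only left exact (cokernels of sheaf maps are not computed sectionwise), but as you yourself observe, mono-preservation is all that is actually used, so the argument stands.
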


\begin{proof}
Let $P$ be a flat sheaf or a flat presheaf. Tensoring it with the exact sequence
$0\to \bbZ\to\bbZ\to\bbZ\mod n\to 0$ shows that it is torsion free. To prove the
other implication, recall that the group of sections over $U$ of the tensor
product of presheaves $A$ and $B$ is defined to be $A(U)\tensor B(U)$.
Therefore, the characterisation of flat groups as torsion free groups implies
the converse implication in the category of presheaves. It remains to show the
implication ``torsionfree'' $\then$ ``flat'' for sheaves: let $P$ be a torsion free sheaf. The functor
$-\tensor P$ is equal to $s(i-\tensor iP)$. This functor preserves
monomorphisms. Hence $P$ is flat.
\end{proof}

This simple characterisation of flatness is the key to the following assertions
about flat sheaves of abelian groups.

\begin{cor}\label{prop_flat1}
Let $P$ be a flat sheaf (or presheaf) of abelian groups.
\begin{enumprop}
  \item Every subsheaf of $P$ is flat,
  \item the sheafification of $P$ is flat, and
  \item $\HH^0(X,P)$ is a flat group for every object $X$ of the site.
\end{enumprop}
\end{cor}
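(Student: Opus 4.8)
The plan is to deduce all three statements from Proposition~\ref{sheaf_flat_tf}, which says that for (pre)sheaves of abelian groups flatness and torsion-freeness coincide, from the remark immediately preceding it --- a (pre)sheaf $A$ is torsion free exactly when every group of sections $A(U)$ is torsion free --- and from the classical fact recalled above that an abelian group is flat if and only if it is torsion free. In each case I would translate the assertion into the torsion-free picture, verify it there, and translate back.

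For \enum 1, let $Q\sub P$ be a sub(pre)sheaf. Since $P$ is flat, it is torsion free by Proposition~\ref{sheaf_flat_tf}, so each $P(U)$ is a torsion free group; hence the subgroup $Q(U)\sub P(U)$ is torsion free for every $U$, so $Q$ is a torsion free (pre)sheaf, and Proposition~\ref{sheaf_flat_tf} gives that $Q$ is flat. For \enum 3, one observes that $\HH^0(X,P)$ is just the group of sections $P(X)$; as $P$ is torsion free, $P(X)$ is torsion free by the sectionwise criterion, and a torsion free abelian group is flat.

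For \enum 2 the argument is the same in spirit but uses one external input. Again $P$ is torsion free by Proposition~\ref{sheaf_flat_tf}, i.e.\ the multiplication $n\colon P\to P$ is a monomorphism for every integer $n\geq 1$. The sheafification functor $s$ is left exact and hence preserves monomorphisms, so $n\colon sP\to sP$ is a monomorphism for every $n\geq 1$; thus $sP$ is torsion free, and Proposition~\ref{sheaf_flat_tf} shows it is flat. (If $P$ is already a sheaf there is nothing to prove, as then $sP=P$.)

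I do not expect a genuine obstacle. The only point that goes beyond the torsion-free bookkeeping is the left-exactness of sheafification used in \enum 2, which is a standard property of the category of abelian sheaves on a Grothendieck topology; and the ``(or presheaf)'' variants require no change, since the presheaf tensor product is computed sectionwise, so that the sectionwise torsion-freeness criterion applies unchanged.
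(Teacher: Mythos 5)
Your proofs of \enum 1 and \enum 3 are exactly the paper's: both follow immediately from the torsion-free characterisation of proposition \ref{sheaf_flat_tf} applied sectionwise, and the paper dismisses them in one line. For \enum 2 you take a genuinely different, and arguably cleaner, route. The paper does \emph{not} pass through torsion-freeness there: it runs a chain of Hom-adjunctions,
\[
\hom_\scrS(sP\tensor F,T)=\hom_\scrS(sP,\shom(F,T))=\hom_\scrP(P,\shom(iF,iT))=\hom_\scrP(P\tensor iF,iT)=\hom_\scrS(s(P\tensor iF),T),
\]
to identify the functor $sP\tensor -$ with $s(P\tensor i-)$, which preserves monomorphisms because $P$ is a flat presheaf and $s$ is exact. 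You instead observe that torsion-freeness of $P$ (multiplication by $n$ is a monomorphism) is preserved by the exact functor $s$, so $sP$ is a torsion-free sheaf and hence flat by proposition \ref{sheaf_flat_tf}. Both arguments are correct and rest on the same external input, the exactness of sheafification; there is no circularity in your use of proposition \ref{sheaf_flat_tf}, since its sheaf-theoretic direction is proved independently. What the paper's computation buys is the identity $(sP\tensor -)=s(P\tensor i-)$ itself, a compatibility of the two tensor products under sheafification that is reused implicitly later (e.g.\ in the K\"unneth argument); what your version buys is uniformity, since all three parts then reduce to the single torsion-free criterion.
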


\begin{proof}
Using the characterisation of flat sheaves, it is clear that a subsheaf of a flat sheaf is flat.
Let $P$ be a flat presheaf and let $T$ be any sheaf of abelian groups. The calculation
\begin{align*}
\hom_\scrS(sP\tensor F,T) &= \hom_\scrS(sP,\shom(F,T)) \\
        &= \hom_\scrP(P,\shom(iF,iT)) \\
        &= \hom_\scrP(P\tensor iF,iT) \\
        &= \hom_\scrS(s(P\tensor iF),T)
\end{align*}
shows that the functor $(sP\tensor -)=s(P\tensor i-)$ respects monomorphisms. Hence, $sP$ is flat.
Finally, \enum 3 is trivial.
\end{proof}

Let $f$ be an arbitrary morphisms of sites. In our applications we are
interested in the case of the morphism ``change of topology'' (e.\ g.\ $R_\fl\to
R_\et$) and in the case of ``restriction of site'' (e.\ g.\ the morphism induced
by $\spec K\into\spec R$) These morphisms induce functors $f^\ast$ and
$f_\ast$. We want to study their flatness preserving properties.

\begin{lemma}\label{indlim_flat}
Let $(P_i)_{i\in I}$ be a directed system of flat groups. The group $\indlim
P_i$ is flat.
\end{lemma}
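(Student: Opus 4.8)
The plan is to reduce the statement to the corresponding fact about abelian groups, namely that a directed colimit of torsion-free groups is torsion-free, and then invoke Proposition \ref{sheaf_flat_tf} to translate between flatness and torsion-freeness. The key point is that flatness of a sheaf has been characterised section-wise via torsion-freeness, so the whole argument takes place at the level of groups of sections, together with the behaviour of filtered colimits.

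First I would record that, by Proposition \ref{sheaf_flat_tf}, each $P_i$ being flat means each $P_i$ is torsion free, i.e. for every $n\geq 1$ the multiplication map $n\colon P_i\to P_i$ is a monomorphism. Next I would use that $\indlim P_i$ is the filtered colimit of the $P_i$ and that filtered colimits are exact in the category of abelian groups: the functor $\indlim$ commutes with kernels, so $\ker(n\colon\indlim P_i\to\indlim P_i)=\indlim\ker(n\colon P_i\to P_i)=\indlim 0=0$. Hence $n$ is a monomorphism on $\indlim P_i$ for every $n\geq 1$, so $\indlim P_i$ is torsion free. Applying Proposition \ref{sheaf_flat_tf} once more (in the group, i.e. presheaf-over-a-point, case, which is just the classical statement that torsion-free abelian groups are flat) gives that $\indlim P_i$ is flat.

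Alternatively, and perhaps more in the spirit of avoiding stalks, one can argue directly: an element of $\indlim P_i$ killed by $n$ is represented by some $x\in P_j$ with $nx$ mapping to $0$ in $\indlim P_i$, hence $nx$ already becomes $0$ in some $P_k$ for $k\geq j$; since $P_k$ is torsion free, the image of $x$ in $P_k$ is $0$, so $x$ represents $0$ in $\indlim P_i$. This is the concrete unwinding of the exactness of filtered colimits.

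I do not expect any serious obstacle here; the only mild subtlety is that one is tacitly using exactness of filtered colimits in $\AB$ (or, equivalently, the explicit description of elements of a filtered colimit), which is standard. No sheafification is involved, since $\indlim P_i$ is taken in the category of abelian groups; the lemma is purely a statement about groups and is used later to deduce flatness-preservation properties of $f^\ast$ and $f_\ast$.
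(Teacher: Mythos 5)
Your proof is correct, but it takes a different route from the paper's. The paper argues in one line directly with the definition of flatness: since the tensor product commutes with colimits and filtered colimits are exact in $\AB$, the functor $-\tensor\indlim P_i$ is isomorphic to $\indlim(-\tensor P_i)$, which is a filtered colimit of exact functors and hence exact. You instead pass through the characterisation ``flat $\iff$ torsion free'' (the group-level statement underlying proposition \ref{sheaf_flat_tf}, i.e.\ the classical Bourbaki fact), and check that torsion-freeness survives filtered colimits, either via exactness of $\indlim$ applied to $\ker(n)$ or by the explicit element chase. Both arguments ultimately rest on exactness of filtered colimits; the paper's version additionally uses that $\tensor$ commutes with colimits but needs no special feature of $\bbZ$, so it works verbatim for modules over any ring, whereas yours leans on the torsion-free criterion, which is particular to abelian groups (or Dedekind base rings). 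On the other hand, your section-wise/torsion-free viewpoint is exactly the one the paper uses elsewhere (proposition \ref{sheaf_flat_tf} and corollary \ref{prop_flat1}), so it transports cleanly to the remark following the lemma that the same holds for directed limits of flat sheaves. One small caveat: as stated, proposition \ref{sheaf_flat_tf} is about sheaves and presheaves, not abstract groups, so strictly you should cite the classical fact for abelian groups (as the paper does via \cite{ca}) rather than that proposition — you acknowledge this, so it is only a matter of phrasing.
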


\begin{proof}
Since the inductive limit is exact and commutes with tensor products, the
functor $-\tensor\indlim P_i$ is isomorphic to the exact functor
$\indlim(-\tensor P_i)$.
\end{proof}

Of course, the same is true for a directed limit of flat sheaves.

\begin{prop}\label{flat_preserve}
Let $P$ be a flat sheaf, then $f^\ast P$ and $f_\ast P$ are flat.
\end{prop}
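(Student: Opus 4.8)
The plan is to reduce everything to the characterisation of flatness by torsion-freeness. By Proposition~\ref{sheaf_flat_tf} it suffices to show that $f^\ast P$ and $f_\ast P$ are torsion free when $P$ is. I would first record the elementary principle that \emph{every left exact additive functor $F\colon\scrA\to\scrB$ of abelian categories preserves torsion free objects}: torsion-freeness of $P$ means $\ker(n\colon P\to P)=0$ for all $n\geq 1$, and since $F$ is additive and left exact it preserves the morphism $n\cdot\id$, kernels, and the zero object, so $\ker(n\colon FP\to FP)=F\ker(n\colon P\to P)=F(0)=0$.

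For $f_\ast$ this finishes the job immediately: $f_\ast$, being a right adjoint, is left exact, hence $f_\ast P$ is torsion free and therefore flat. One may also argue directly on sections, using that $f_\ast P$ is obtained from $P$ by precomposition with the underlying continuous functor of $f$, that subgroups and inverse limits of torsion free groups are torsion free, and that a sheaf with torsion free section groups is torsion free.

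For $f^\ast$ there are two routes. The quickest is to invoke that the inverse image functor of a morphism of sites is exact --- this is precisely hypothesis $(\cup_2)$, which holds in the two cases of interest, ``change of topology'' and ``restriction of site'' --- and then apply the principle above. The second route avoids exactness: writing $f^\ast = s\circ f^p$ with $s$ the sheafification functor and $f^p$ the presheaf-level inverse image, the value $f^p(iP)(V)$ is a filtered colimit of groups of the form $P(U)$, which are torsion free; by Lemma~\ref{indlim_flat} it is torsion free, so $f^p(iP)$ is a flat presheaf, and hence $f^\ast P = s\,f^p(iP)$ is flat by Corollary~\ref{prop_flat1}.

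Neither step presents a real obstacle; the only thing to be careful about is the description of $f^\ast$ --- namely that $f^\ast$ is left exact, or equivalently that its presheaf incarnation is the filtered colimit over the relevant comma category --- which is exactly what allows one of the two arguments above to go through.
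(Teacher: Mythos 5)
Your proof is correct and, in its second route for $f^\ast$, coincides with the paper's own argument: $f_\ast$ preserves torsion-freeness (hence flatness by Proposition \ref{sheaf_flat_tf}) since its section groups are section groups of $P$, and $f^\ast P$ is the sheafification of a presheaf whose sections are filtered colimits of the torsion free groups $P(U)$, hence flat by Lemma \ref{indlim_flat} and Corollary \ref{prop_flat1}. The only caution is that your first route for $f^\ast$, which invokes exactness of $f^\ast$, is not available for an arbitrary morphism of sites --- as you yourself note --- so it is the colimit argument that carries the statement in the generality in which it is asserted.
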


\begin{proof}
Due to proposition \ref{sheaf_flat_tf}, the assertion for the case of the
functor $f_\ast$ is trivial. Things are more complicated for the case of the
functor $f^\ast$ induced by a morphism $f\colon S\to S'$ of sites. Let $P$ be a
flat sheaf on $S'$. In a first step to obtain $f^\ast P$ from $P$, we have to
extend the sheaf $P$ to a presheaf on $S$. Then we sheafify this presheaf to get
the sheaf $f^\ast P$: The presheaf-extension to $S$ is given by the presheaf
$T\mapsto\indlim P(T)$ for some inductive system, cf.\ \cite{ec}, II, \S2,
prop.\ 2.2 and the following remarks. According to the above lemma, this limit
is a flat group, thus, the presheaf is flat, thus, the associated sheaf is flat.
\end{proof}

The above proof gets easier, if $j\colon Y\to X$ is a restriction map, i.\,e.\
if $j$ belongs to the category of the Grothendieck topology. In this case,
$j^\ast P$ is the restriction of $P$ to $Y$; this sheaf is clearly flat.
\par

In ``classical'' sheaf theory we can argue with stalks\index{stalks} to prove
various properties of flat sheaves, e.\ g.\ \cite{ks}, proposition 2.4.12. In
the general setting, the functor $j_!$ is of great help (cf.\ \cite{ec}, II,
\S3, remark 3.18). Let $j\colon Y\to X$ be a morphism of schemes that belongs to
the category of the Grothendieck topology. It induces a functor $j_!$,
\emph{extension by $0$}, \index{extension by $0$} between presheaf categories
$\scrP_Y\to\scrP_X$ as follows: Let $F$ be a presheaf over $Y$. We set
\[
(j_!F)(T)= \!\!\!\bigoplus_{\phi\in\hom_X(T,Y)}\!\!\! F(T_\phi)
\]
for every $X$-scheme $T$. Here $T_\phi$ is the scheme $T$, regarded as an
$Y$-scheme by means of $\phi$. By sheafification, this functor induces a functor
of the appropriate categories of sheaves. We will denote this functor by $j_!$,
too. It has the following fundamental properties.

\begin{lemma}\label{prop_f!}\
\begin{enumprop}
  \item The functor $j_!$ is left adjoint to $j^\ast$,
  \item $j_!$ is exact (in particular, the functor $j^\ast$ preserves injective objects) and
  \item we have $j_!\bbZ\tensor F=j_!j^\ast F$ for every sheaf $F$. In particular, 
    $j_!\bbZ$ is flat.
\end{enumprop}
\end{lemma}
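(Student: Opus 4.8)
The plan is to treat the three assertions in turn, in each case reducing to the analogous statement for presheaves together with the exactness of the sheafification functor $s$. For \enum 1 I would first check the adjunction at the level of presheaves: unwinding the definition of $j_!^{\mathrm{pre}}$, a morphism of presheaves $j_!^{\mathrm{pre}}F\to G$ on $X$ amounts to a compatible family of homomorphisms $F(T_\phi)\to G(T)$ indexed by pairs $(T,\phi)$ with $T$ an $X$-scheme and $\phi\in\hom_X(T,Y)$; since $\phi$ is a morphism \emph{over} $X$, the composite $j\circ\phi$ is the structure morphism of $T$, and this is exactly what identifies such a family with a morphism $F\to j^\ast G$ of presheaves on $Y$. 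Sheafifying, and using that $s$ is left adjoint to the inclusion $i\colon\scrS\to\scrP$ while $j^\ast$ commutes with $i$ (the restriction of a sheaf is again a sheaf), one gets
\[
\hom_{\scrS_X}(j_!F,G)=\hom_{\scrP_X}(j_!^{\mathrm{pre}}iF,iG)=\hom_{\scrP_Y}(iF,j^\ast iG)=\hom_{\scrS_Y}(F,j^\ast G),
\]
which is \enum 1.

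For \enum 2 I would use the factorisation $j_!=s\circ j_!^{\mathrm{pre}}\circ i$. Here $i$ preserves monomorphisms (it is left exact, being a right adjoint), $j_!^{\mathrm{pre}}$ is exact (objectwise it is a coproduct of evaluation functors, and coproducts of short exact sequences of abelian groups are exact), and $s$ is exact; hence $j_!$ preserves monomorphisms. As $j_!$ is a left adjoint by \enum 1 it is in addition right exact, so it is exact. The parenthetical statement that $j^\ast$ preserves injectives is then the general principle recalled earlier in the text --- the right adjoint of an exact functor preserves injective objects --- applied to the pair $(j_!,j^\ast)$.

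For \enum 3 I would establish the isomorphism $j_!\bbZ\tensor F\cong j_!j^\ast F$ by Yoneda: for every sheaf $T$ on $X$ there is a natural chain
\begin{align*}
\hom_X(j_!\bbZ\tensor F,T)
&\iso\hom_X(j_!\bbZ,\shom_X(F,T))
\iso\hom_Y(\bbZ,j^\ast\shom_X(F,T))\\
&\iso\hom_Y(\bbZ,\shom_Y(j^\ast F,j^\ast T))
\iso\hom_Y(j^\ast F,j^\ast T)
\iso\hom_X(j_!j^\ast F,T),
\end{align*}
using the tensor--hom adjunction on $X$, the adjunction \enum 1, the fact that $j^\ast$ commutes with the internal $\shom$ (checked on sections: for a $Y$-scheme $W$ the slice sites over $W$ formed inside $X$ and inside $Y$ coincide), the tensor--hom adjunction on $Y$ together with $\bbZ$ being the $\tensor$-unit, and \enum 1 once more; naturality in $F$ and $T$ then gives the claim. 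For the flatness of $j_!\bbZ$, observe that by the isomorphism just proved the functor $F\mapsto j_!\bbZ\tensor F$ is isomorphic to $F\mapsto j_!j^\ast F$, which is exact because $j^\ast$ is exact (restriction) and $j_!$ is exact by \enum 2; hence $j_!\bbZ$ is acyclic for $\tensor$, i.\,e.\ flat. (Alternatively: $j_!^{\mathrm{pre}}\bbZ$ has free, hence torsion-free, groups of sections, so it is a flat presheaf by Proposition \ref{sheaf_flat_tf}, and its sheafification $j_!\bbZ$ is flat by Corollary \ref{prop_flat1}.)

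The only genuine obstacle is \enum 3: the identity $j_!\bbZ\tensor F\cong j_!j^\ast F$ fails at the presheaf level --- the differing structure morphisms obstruct the naive identification of sections --- so one is forced to work with sheaves, either through the Yoneda computation above or, equivalently, by recognising the statement as the projection formula $j_!(j^\ast F\tensor G)\cong F\tensor j_!G$ specialised to $G=\bbZ$. Apart from this the arguments are bookkeeping; the one point to watch in \enum 1 is to use $\hom_X(T,Y)$ rather than $\hom(T,Y)$, which is precisely what makes $T_\phi$, after applying $j$, carry the original $X$-structure of $T$.
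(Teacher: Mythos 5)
Your proposal is correct and follows essentially the same route as the paper: for \enum{3} the paper's own proof is exactly such an adjunction/Yoneda chain (``a calculation similar to the proof of \ref{prop_flat1}'') identifying $j_!\bbZ\tensor-$ with the exact functor $j_!j^\ast$, while \enum{1} and \enum{2} are simply cited to Milne, so your arguments just fill in that citation. One small inaccuracy in your closing remark: the identification $j_!\bbZ\tensor F\iso j_!j^\ast F$ does \emph{not} in fact fail at the presheaf level --- since $j\circ\phi$ is the structure morphism of $T$, one has $(j_!^{\mathrm{pre}}\bbZ\tensor^{\mathrm{pre}}P)(T)=\bigoplus_\phi P(T)=\bigoplus_\phi P(T_\phi)=(j_!^{\mathrm{pre}}j^\ast P)(T)$ naturally in $T$ --- but this does not affect the validity of your proof.
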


\begin{proof}
\enum 1 and \enum 2 are \cite{ec}, II, \S3, remark 3.18. A calculation, similar
to the proof of \ref{prop_flat1}, shows that the functor $j_!\bbZ\tensor -$ is
isomorphic to the exact functor $j_!j^\ast$.
\end{proof}


We have now collected all technical facts about flat sheaves that we will need
in the following. Let us continue with flat resolutions and their properties:

\begin{prop}\index{flat resolution}\label{flat_res}
Let $\scrS$ be a category of sheaves of abelian groups and let $F$ be a sheaf.
Then there exists an exact sequence
\[
0\longto P_1\longto P_0\Longto\eps F \longto 0
\]
with flat sheaves $P_0$, $P_1$.
\end{prop}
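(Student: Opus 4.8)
The plan is to produce a flat resolution of length one by first hitting $F$ with a canonical surjection from a very large, manifestly torsion-free sheaf, and then invoking the earlier corollary \ref{prop_flat1}\,(\textit{i}) that subsheaves of flat sheaves are flat. The natural candidate for the surjection is the sum of the ``extension by $0$'' sheaves: for each object $U$ of the site and each section, we have a morphism $j_{U!}\bbZ\to F$ corresponding under the adjunction of lemma \ref{prop_f!}\,(\textit{i}) to that section of $F(U)$, where $j_U\colon U\to X$ is the structure morphism. Taking the direct sum over all objects $U$ and all sections $s\in F(U)$ gives a morphism
\[
P_0\defeq\bigoplus_{U}\;\bigoplus_{s\in F(U)} j_{U!}\bbZ\longto F
\]
which is an epimorphism of sheaves, since every local section of $F$ lies in the image on the corresponding summand.

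The key point is that $P_0$ is flat. By lemma \ref{prop_f!}\,(\textit{iii}), each $j_{U!}\bbZ$ is flat; equivalently, by proposition \ref{sheaf_flat_tf}, it is torsion free. A direct sum of torsion-free sheaves is torsion free (multiplication by $n$ is injective on each summand, hence on the sum), so $P_0$ is torsion free, hence flat again by proposition \ref{sheaf_flat_tf}. Alternatively one can phrase this via lemma \ref{indlim_flat}, writing the direct sum as a directed colimit of its finite subsums, but the torsion-free characterisation is cleaner and avoids the colimit bookkeeping.

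Setting $P_1\defeq\ker(P_0\to F)$, we get the exact sequence $0\to P_1\to P_0\to F\to 0$, and $P_1$ is a subsheaf of the flat sheaf $P_0$, hence flat by corollary \ref{prop_flat1}\,(\textit{i}). This is exactly the asserted resolution, and its length is $1$; note that this is the concrete incarnation of the finite $\tor$-dimension hypothesis for this category of sheaves. I do not anticipate a serious obstacle: the only thing that needs a moment's care is checking that $P_0\to F$ is genuinely an epimorphism of \emph{sheaves} rather than merely of presheaves, but this is immediate because an epimorphism of abelian sheaves is detected by the condition that every local section be locally in the image, and here every section of $F$ over every $U$ is literally in the image over $U$. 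Everything else is a formal consequence of the lemmas already established in this subsection.
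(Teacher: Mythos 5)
Your proposal is correct and follows essentially the same route as the paper: a canonical epimorphism $\bigoplus_{(U,s)} j_!\bbZ\to F$ indexed by objects and sections, flatness of the source, and flatness of the kernel as a subsheaf of a flat sheaf via corollary \ref{prop_flat1}\,(\textit{i}). Your added checks (torsion-freeness of the direct sum, epimorphy at the sheaf level) are details the paper leaves implicit, but the argument is the same.
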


This lemma generalises the appropriate property for the category of abelian
groups: The $\tor$-dimensions\index{Tor-dimension} \index{weak dimension} (or
weak dimension as in \cite{ce}) of the categories of abelian groups, abelian
presheaves and abelian sheaves are $\leq 1$.

\begin{proof}
For every object $U$ of the Grothendieck topology with structure morphism
$j\colon U\to X$, the sheaf $j_!\bbZ$ is flat. We define an epimorphism of
presheaves and thus of sheaves
\[
\eps\colon P_0\defeq\!\bigoplus_{j\colon U\to R\atop s\in F(U)}\! j_!\bbZ\longmapsto F
\]
by $1\mapsto s\in F(U)$ for the component $(j\colon U\to R,s)$. As the the sheaf
$\bigoplus j_!\bbZ$ is flat, the kernel $P_1\defeq\ker\eps$ it is flat, too.
\end{proof}

\begin{remark}\label{enough_proj}\index{enough projectives}
For later use, let us remark that this proposition and lemma \ref{prop_f!} imply
that the category $\scrP$ of abelian presheaves has enough projectives in form
of the presheaves $j_!\bbZ$: Since the presheaf functor $j^\ast$ is exact, the
presheaf functor $j_!$ preserves projectives. In fact, we have
$\hom(j_!\bbZ,F)=F(U)$; this functor is exact on the category of presheaves.
Since flat sheaves need not be projective, we cannot expect that every sheaf has
a projective resolution of length $1$.
\end{remark}

\begin{prop}\label{finite_res}
Let $\ast\in\set{b,+,-,\emptyset}$ and let $X\in\DD^\ast\!\scrS$ be a complex.
Then, there exists a quasi-isomorphism $P\to X$ with a complex
$P\in\DD^\ast\!\scrS$ consisting of flat sheaves. In particular, for any
complex, bounded from below, there exists a quasi-isomorphism $P\to X$ with a
complex $P$ of flat sheaves that is bounded from below.
\end{prop}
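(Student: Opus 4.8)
The plan is to realise $P$ as the total complex of a two-row double complex built from a \emph{functorial} flat resolution of length one. First I would observe that the flat resolution exhibited in the proof of Proposition~\ref{flat_res} is functorial in the sheaf $F$: it is obtained from the canonical epimorphism $\eps_F\colon P_0(F)\twoheadrightarrow F$, where $P_0(F)$ is the coproduct of the sheaves $j_!\bbZ$ over all pairs consisting of a structure morphism $j$ in the site and a section of $F$ over its source --- an assignment plainly natural in $F$ --- by setting $P_1(F)\defeq\ker\eps_F$; and $P_1(F)$ is flat, being a subsheaf of the flat sheaf $P_0(F)$ (corollary~\ref{prop_flat1}). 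So one has a functorial short exact sequence $0\to P_1(F)\to P_0(F)\to F\to 0$ of flat sheaves.

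Given $X\in\DD^\ast\!\scrS$ with differentials $d_X^p$, I would apply $P_0$ and $P_1$ degreewise to obtain a double complex $C$ with two nonzero rows, $C^{p,0}=P_0(X^p)$ and $C^{p,-1}=P_1(X^p)$, horizontal differentials induced by the $d_X^p$, vertical differential the inclusion $P_1(X^p)\hookrightarrow P_0(X^p)$, and the usual signs; then I would set $P\defeq\tot(C)$, so that $P^n=P_0(X^n)\oplus P_1(X^{n+1})$. Two things then need checking. (i)~$P$ is a complex of flat sheaves: each $P^n$ is a \emph{finite} direct sum of flat sheaves, and this is exactly the place where the length of the resolution being one enters --- a longer resolution would force infinite direct sums in some degrees. (ii)~The augmentations $\eps_{X^p}$ assemble to a morphism $P\to X$ that is a quasi-isomorphism: adjoining the row $X^\bullet$ in degree $q=1$ yields a double complex whose columns are the exact sequences $0\to P_1(X^p)\to P_0(X^p)\to X^p\to 0$, and, being concentrated in three consecutive rows, its total complex is acyclic, which says precisely that $\tot(C)\to X$ is a quasi-isomorphism. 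Finally, I would verify the boundedness conditions: if $X^i=0$ for $i<m$ then $P^n=0$ for $n<m-1$, and if $X^i=0$ for $i>n_0$ then $P^n=0$ for $n>n_0$; hence $P\in\DD^\ast\!\scrS$ for every $\ast\in\set{b,+,-,\emptyset}$, the case $\ast=+$ being the ``in particular'' assertion.

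The step that calls for genuine care --- and the reason one cannot simply choose a flat resolution of each $X^p$ separately --- is the functoriality of the resolution: flat sheaves need not be projective, so an arbitrary flat cover of $X^p$ need not lift along an arbitrary flat cover of $X^{p+1}$, and without such lifts there is no double complex to totalise; the $j_!\bbZ$-construction is exactly what makes the resolution functorial. I would also stress that the hypothesis ``$\scrS$ has $\tor$-dimension $\leq 1$'' (Proposition~\ref{flat_res}) is essential here, as it is what produces a double complex with only two rows. As a side remark: for $\ast=b$ one could argue alternatively by choosing a quasi-isomorphism $P'\to X$ from a bounded-above complex of flats vanishing above the top degree of $X$ (so $H^i(P')=0$ below the bottom degree $m$ of $X$) and passing to the good truncation $\tau_{\geq m-1}P'$, whose only new term $\coker(d_{P'}^{m-2})\cong\im(d_{P'}^{m-1})$ is a subsheaf of a flat sheaf, hence flat by corollary~\ref{prop_flat1}; but the total-complex construction settles all four cases simultaneously.
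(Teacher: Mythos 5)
There is a genuine gap at the very first step, and it propagates through the whole construction. The assignment $F\mapsto P_0(F)=\bigoplus_{(j,s)}j_!\bbZ$ is indeed functorial --- a morphism $\phi\colon F\to G$ carries the summand indexed by $(j,s)$ identically onto the summand indexed by $(j,\phi(s))$ --- but it is \emph{not additive}; in particular it does not send zero morphisms to zero morphisms. Since $0\in G(U)$ is itself a section, $P_0(0)$ maps each summand $(j,s)$ isomorphically onto the summand $(j,0)$, which is a nonzero map. Consequently
$P_0(d_X^{p+1})\circ P_0(d_X^p)=P_0(d_X^{p+1}\circ d_X^p)=P_0(0)\neq 0$,
so the rows $P_0(X^\bullet)$ and $P_1(X^\bullet)$ of your would-be double complex are not complexes, and there is nothing to totalise. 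The delicate point is therefore not functoriality alone, as you suggest, but finding a resolution functor that at least annihilates zero morphisms.

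The gap is reparable without changing the architecture of your argument: replace $P_0(F)$ by its quotient $\tilde P_0(F)$ by the sub-coproduct of the summands indexed by the zero sections $(j,0)$. Then $\tilde P_0(F)\cong\bigoplus_{(j,s),\,s\neq 0}j_!\bbZ$ is still flat, the augmentation descends to an epimorphism $\tilde P_0(F)\onto F$ whose kernel $\tilde P_1(F)$ is flat by corollary \ref{prop_flat1}, the assignment is still functorial, and now $\tilde P_0(0)=0$, whence $\tilde P_0(d)^2=\tilde P_0(0)=0$; the two-row double complex exists, its squares commute by naturality, and the rest of your argument (acyclicity of the totalised three-row augmented complex with exact columns, flatness of the finite sums $\tilde P_0(X^n)\oplus\tilde P_1(X^{n+1})$ --- which is indeed where the $\tor$-dimension bound of proposition \ref{flat_res} enters --- and the boundedness bookkeeping) goes through verbatim. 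For comparison, the paper does not prove the proposition at all but refers to \cite{ks}, chapter I, exercise I.23, so a corrected version of your construction would be a legitimate self-contained substitute.
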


\begin{proof}
This is a special case of a more general result. Cf.\ \cite{ks}, chapter I, exc.\ I.23.
\end{proof}

Using this result, it is easy to work with the derived tensor
product.\index{derived tensor product} Remember that $A\tensorl B$ is obtained
by choosing a quasi-isomorphism $P\to B$ with a complex $P$ consisting of flat
sheaves and setting $A\tensorl B\defeq A\tensor P$. The above proposition
implies that in this process we do not leave the category of complexes bounded
from below. Thus, we have a convergent spectral sequence to compute the
hypercohomology of double complexes obtained from the derived tensor product.
\par
Finally, we want to construct the K\"unneth spectral sequence. Unfortunately,
proofs for the existence of the K\"unneth spectral sequence can only be found
for categories with enough projectives, e.\.g.\ for module categories,
\cite{ec}, XVII, \S3, \enum 4 or \cite{g}, I, \S5, theorem 5.5.1. These proofs
do not generalise without more ado to the category of sheaves.

\begin{prop}\label{kunneth}\index{K\"unneth spectral sequence}
Let $X$ and $Y$ be complexes of sheaves. There exists the homological \emph{K\"unneth spectral
sequence}
\[
E^2_{r,s}=\bigoplus_{s_1+s_2=s} \tor_r(\HH_{s_1}(X),\HH_{s_2}(Y))\ssto E_{r+s}=\HH_{r+s}(X\tensorl
Y).
\]
\end{prop}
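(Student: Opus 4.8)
The goal is to produce the Künneth spectral sequence

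$$E^2_{r,s}=\bigoplus_{s_1+s_2=s}\tor_r(\HH_{s_1}(X),\HH_{s_2}(Y))\ssto\HH_{r+s}(X\tensorl Y)$$

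for complexes of abelian sheaves $X,Y$, where the difficulty — explicitly flagged in the text — is that the category $\scrS$ of abelian sheaves lacks enough projectives, so the standard proofs over module categories do not transfer verbatim. My plan is to avoid the issue by \emph{replacing $\scrS$ with the category $\scrP$ of abelian presheaves}, which by remark \ref{enough_proj} \emph{does} have enough projectives (the objects $j_!\bbZ$), run the classical double-complex argument there, and then transport the conclusion back along the exact sheafification functor.

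\begin{proof}
By proposition \ref{finite_res} we may replace $Y$ by a quasi-isomorphic complex $Q$ of flat sheaves bounded from below, so that $X\tensorl Y=X\tensor Q$ (sheaf tensor product) and, by proposition \ref{sheaf_flat_tf}, each $Q_n$ is torsion free. Recall $X\tensor Q=s(iX\tensor iQ)$, where $s$ is sheafification, $i$ the inclusion, and $iX\tensor iQ$ is the presheaf tensor product, computed sectionwise. Since the presheaf category $\scrP$ has enough projectives and $\tor$-dimension $\leq 1$ by remark \ref{enough_proj} and proposition \ref{flat_res}, the classical Künneth spectral sequence for the total complex of a double complex of presheaves (\cite{ce}, XVII, \S3, or \cite{g}, I, \S5) applies to $iX\tensor iQ$: because $iQ$ consists of presheaves that are sectionwise torsion free, hence sectionwise flat over $\bbZ$, the $\tor$-terms are computed by $iQ$ itself and we obtain
\[
{}^{\scrP}E^2_{r,s}=\bigoplus_{s_1+s_2=s}\tor_r\bigl(\HH_{s_1}(iX),\HH_{s_2}(iQ)\bigr)\ssto\HH_{r+s}(iX\tensor iQ),
\]
where the $\tor$ on presheaves is taken sectionwise. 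Now apply the sheafification functor $s$. It is exact, it commutes with tensor products and with cohomology of complexes, it commutes with the finite direct sums appearing in the $E^2$-page, and $s\circ i=\id_\scrS$ on the sheaves $\HH_{s_i}(X)$, $\HH_{s_i}(Y)$ themselves (these are already sheaves, but the presheaf cohomology $\HH_{s_i}(iX)$ need only have these as sheafifications). Crucially, sheafification also commutes with the formation of $\tor$: a presheaf flat resolution of $iX$ sheafifies, by corollary \ref{prop_flat1}(ii), to a flat resolution of $X$ in $\scrS$, and tensoring then sheafifying equals sheafifying then tensoring; so $s\,\tor_r^{\scrP}(\HH_{s_1}(iX),\HH_{s_2}(iQ))=\tor_r^{\scrS}(\HH_{s_1}(X),\HH_{s_2}(Y))$. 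Applying $s$ to the whole spectral sequence — legitimate since $s$ is exact, hence sends a spectral sequence of presheaves to one of sheaves with $s(E^r)=(sE)^r$ — yields
\[
E^2_{r,s}=\bigoplus_{s_1+s_2=s}\tor_r^{\scrS}\bigl(\HH_{s_1}(X),\HH_{s_2}(Y)\bigr)\ssto\HH_{r+s}\bigl(s(iX\tensor iQ)\bigr)=\HH_{r+s}(X\tensor Q)=\HH_{r+s}(X\tensorl Y),
\]
as required. Convergence is inherited from the presheaf case, using that all complexes are bounded from below and the $\tor$-dimension is finite, so the spectral sequence is concentrated in rows $r\in\{0,1\}$ (in fact this already forces degeneration into short exact ``universal coefficient'' sequences, but we need only the convergent $E^2$-statement).
\end{proof}

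\textbf{Where the main obstacle lies.} The one delicate point — and the reason a naive citation does not suffice — is verifying that sheafification commutes with $\tor$ and with passage to the abutment of a spectral sequence. For the abutment this is just exactness of $s$ together with its compatibility with $\otimes$ and with cohomology; for the $\tor$-terms it is exactly the content of corollary \ref{prop_flat1}(ii) (sheafification of a flat presheaf is flat) plus $s(P\tensor i(-))=sP\tensor(-)$. Everything else is the standard filtration-of-a-double-complex bookkeeping, which I would not reproduce. An alternative, if one preferred to stay within $\scrS$, would be to use the flat resolutions of proposition \ref{flat_res} directly and filter the double complex $P_\bullet(X)\tensor Q_\bullet(Y)$ of flat sheaves by columns and rows; the finite $\tor$-dimension (weak dimension $\leq1$) guarantees boundedness and hence convergence, and the flatness of all entries makes one of the two spectral sequences of the double complex collapse to the desired $E^2$-page. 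I expect the presheaf route to be cleaner to write, since it lets one quote the module-category Künneth theorem almost verbatim.
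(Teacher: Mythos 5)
Your proposal is correct and follows essentially the same route as the paper: establish the K\"unneth spectral sequence in the presheaf category $\scrP$, where remark \ref{enough_proj} provides enough projectives so the classical argument applies, and then transport it to $\scrS$ via the exact sheafification functor, using that sheafification is compatible with tensor products and flat objects. The paper dismisses the transport step as ``routine verifications''; you spell out the two points that actually need checking (that $s$ carries presheaf $\tor$ to sheaf $\tor$ via corollary \ref{prop_flat1}, and that exactness of $s$ preserves the $E^2$ and limit terms), which is a useful elaboration rather than a deviation.
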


\begin{proof}
In a first step, we prove the existence of the K\"unneth spectral sequence in
the category $\scrP$ of presheaves. The proof given in \cite{g}, I, \S 5,
theorem 5.5.1 (or 5.4.1, respectively) is purely categorial and rests on the
existence of (projective) Cartan-Eilenberg resolutions. Since $\scrP$ has enough
projectives\index{enough projectives} (remark on page \pageref{enough_proj}), we
can construct a projective Cartan-Eilenberg resolution\index{Cartan-Eilenberg
resolution} (in \cite{g} those resolutions are called ``r\'esolution injective''
or ``r\'esolution projective'', respectively) in $\scrP$ and construct the
K\"unneth spectral sequence for the complexes of presheaves $iX$ and $iY$ as it
is shown there.
\par
Since the functor of sheafification is exact, and since tensor products and flat
sheaves are compatible with sheafification, we sheafify the K\"unneth spectral
sequence in the category $\scrP$ to obtain a spectral sequence in the category
$\scrS$ of sheaves. Routine verifycations show that this spectral sequence has
the right $E_2$ and limit terms and thus is the K\"unneth spectral sequence in
the category of abelian sheaves.
\end{proof}

\pagebreak[2]

\begin{remarks}
 \item Since the $\tor$-dimension of the categories of abelian sheaves and pre\-sheaves is 1 by
   proposition \ref{flat_res}, the K\"unneth spectral sequence collapses to a family of exact
   sequences
   \[
\!\!\!\!\!\!\!\!\!\!\!\!\!\!
   0\to\!\!\!\bigoplus_{s_1+s_2=s}\!\!\!\HH_{s_1}(X)\tensor\HH_{s_2}(Y)\to \HH_s(X\tensorl Y)\to\!\!\!
   \bigoplus_{s_1+s_2=s-1}\!\!\!\tor_1(\HH_{s_1}(X),\HH_{s_2}(Y))\to 0.
   \]
 \item The first edge morphism induces the morphism $\alpha$ of \eqref{morph_alpha}. For this
 equality cf.\ \cite{ce}, XVII, \S3, proposition 3.1.
\end{remarks}

\subsection{Cup Products in the Category of Abelian Sheaves}

In this section, we study some fundamental functors of sheaf cohomology. It is
an important observation that they all come along as adjoint functors. Using the
theory of $\cup$-functors, we will see that they all admit cup products. In our
applications, we are mainly interested in sheaves on the \'etale and the flat
site of a discrete valuation ring $R$, so let us adopt our notation to this
situation. If nothing else is specified, we are working with a fixed site over
$R$ with its abelian category of abelian sheaves. Of course, all these results
are true for the category of abelian sheaves on any site or topological space.

The following facts are well-known, e.\,g.\ \cite{ec}.

\begin{prop}\label{functor_sheaf}
The following pairs of functors are adjoint:
\begin{enumprop}
  \item Sheafification $s\colon\scrP\to\scrS$ and inclusion
    $i\colon\scrS\to\scrP$ for the category of abelian sheaves on any fixed site.
  \item $f^\ast$ and $f_\ast$ associated to the canonical morphism $f\colon
    R_\fl\to R_\et$. \item $j^\ast$ and $j_\ast$ for a morphism $j\colon Y\to X$
    of schemes. \item The functors ``constant $R$-sheaf''\index{constant sheaf}
    $G\mapsto\tilde G$ and $\HH^0(R,-)$.
  \item The functor $G\mapsto i_\ast\tilde G$ and $\HH^0_k(R,-)\defeq\ker(\HH^0(R,-)\to\HH^0(K,-))$,
    the functor ``sections with support over $k$'' of local cohomology,\footnote{For
    a brief introduction to local cohomology, see e.\,g.\
    \cite{lc}.}\index{local cohomology} with respect to the closed immersion 
    $i\colon \spec k\into \spec R$.
\qed
\end{enumprop}
\end{prop}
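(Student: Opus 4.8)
The plan is to handle the five adjunctions one at a time, in each case reducing to a standard fact. Item~(i) is nothing but the universal property of the associated sheaf (\cite{ec}, II), and items~(ii), (iii) are instances of the general principle that a morphism of sites induces a morphism of the associated topoi whose inverse-image functor is, by construction, left adjoint to the direct-image functor; one applies this to the change-of-topology morphism $R_\fl\to R_\et$ and to the morphism of sites attached to a morphism of schemes $j\colon Y\to X$ (\cite{ec}, II and III). So for~(i)--(iii) there is nothing to do beyond pointing to the references.

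For~(iv) I would use that $\spec R$ is a final object of the site. Writing $G_{\mathrm{pre}}$ for the constant presheaf with value $G$, every restriction map of $G_{\mathrm{pre}}$ is an identity, so a morphism of presheaves $G_{\mathrm{pre}}\to P$ is the same datum as a homomorphism $G\to P(\spec R)$; that is, $\hom_\scrP(G_{\mathrm{pre}},P)\isoto\hom_\AB(G,P(\spec R))$, naturally in $P$. Since the constant sheaf is $\tilde G=s\,G_{\mathrm{pre}}$, composing with the sheafification adjunction of~(i) gives
\[
\hom_\scrS(\tilde G,F)\isoto\hom_\scrP\bigl(G_{\mathrm{pre}},iF\bigr)\isoto\hom_\AB\bigl(G,(iF)(\spec R)\bigr)=\hom_\AB\bigl(G,\HH^0(R,F)\bigr),
\]
which is the claim.

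For~(v) I would work with the closed immersion $i\colon\spec k\into\spec R$ and its open complement $j\colon\spec K\into\spec R$. Conceptually, the direct image $i_\ast$ along a closed immersion has a right adjoint $i^!$ (\emph{sections with support}), and by the definition of local cohomology its degree-zero part is the functor $F\mapsto\ker\bigl(\HH^0(R,F)\to\HH^0(K,F)\bigr)=\HH^0_k(R,F)$; so the adjunction $i_\ast\dashv i^!$ followed by~(iv) on the site over $\spec k$ yields the assertion. Concretely one can avoid $i^!$ by using the localisation sequence $0\to j_!j^\ast\to\id\to i_\ast i^\ast\to 0$ of endofunctors of $\scrS$: since the pullback of a constant sheaf is constant, $i_\ast\tilde G$ is the cokernel of $j_!j^\ast$ applied to the constant sheaf $G$ on $\spec R$, and applying the left-exact functor $\hom_\scrS(-,F)$ together with~(iv) on the sites over $\spec R$ and $\spec K$ and the adjunction $j_!\dashv j^\ast$ of lemma~\ref{prop_f!} produces
\[
\hom_\scrS(i_\ast\tilde G,F)\isoto\ker\bigl(\hom_\AB(G,\HH^0(R,F))\to\hom_\AB(G,\HH^0(K,F))\bigr)=\hom_\AB\bigl(G,\HH^0_k(R,F)\bigr),
\]
the middle arrow being induced by restriction along $\spec K\into\spec R$.

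The one step that is not pure diagram-chasing is~(v): I expect the main obstacle to be checking that over the flat and étale sites of $\spec R$ the direct image along the closed point really admits a right adjoint computing sections with support --- equivalently, that the localisation sequence above is available there --- and that in degree zero this functor reduces exactly to $\ker(\HH^0(R,-)\to\HH^0(K,-))$. Granting these standard facts, the whole proposition is a formal composition of adjunctions.
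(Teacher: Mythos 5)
Your proposal is correct; note, however, that the paper does not actually prove this proposition --- it is stated with a terminal box and prefaced by ``the following facts are well-known'', with \cite{ec} as the reference, the only hint being the remark that follows the statement, which realises \enum 1 and \enum 4 as inverse/direct image along morphisms of sites ($\calC\to\mathfrak T$ with the trivial topology, respectively $h\colon\set\ast\to(\text{Schemes}/R)$). Your treatment of \enum 1 -- \enum 3 therefore coincides with the paper's point of view, and your hands-on argument for \enum 4 via the final object $\spec R$ is just an unwinding of the same adjunction the paper invokes abstractly. The genuine added value of your write-up is \enum 5, for which the paper offers nothing. Your reduction to the presentation $j_!\tilde G_K\to\tilde G_R\to i_\ast\tilde G\to 0$ is sound, and it is worth observing that you only need right-exactness of this sequence: you apply the contravariant left-exact functor $\hom_\scrS(-,F)$, so injectivity of $j_!\tilde G_K\to\tilde G_R$ is irrelevant, which lightens the verification. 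The one point to nail down --- and you have honestly flagged it --- is that the cokernel identification $\coker(j_!j^\ast\tilde G_R\to\tilde G_R)\iso i_\ast\tilde G$ holds not only on the small \'etale site, where it is the standard localisation sequence for an open/closed decomposition (\cite{ec}, II, \S3), but also on the flat site of $\spec R$, since the paper later needs this adjunction, and hence the $\cup$-functor structure and the derived functors $\HH^\ast_{k,\fl}$, in flat cohomology (e.g.\ for Bester's pairing on $\mu_{p^n,R}$). There one checks directly that for connected $T\to\spec R$ the presheaf cokernel is $G$ when $T\times_Rk\neq\emptyset$ and $0$ otherwise, and that its sheafification agrees with $i_\ast\tilde G$; granting this, your chain of identifications, together with left-exactness of $\hom_\AB(G,-)$ to pull the kernel inside, completes the proof.
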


It is common to denote the functor $\HH^0(R,-)$ by $\Gamma$ and to denote its
derived functor in the sense of derived categories by $\RR\Gamma$. Moreover, let
us write $\Gamma\!_k$ for the functor $\HH^0_k(R,-)$ and $\RR\Gamma\!_k$ for its
derived functor.
\par
In fact, even the functors of \enum 1 and \enum 4 can be seen as an incarnation
of a functor associated to a morphism of sites: Let $\calC$ be the category of
the Grothendieck topology $\mathfrak T$. Then the category $\calC$ with trivial
coverings defines a Grothendieck topology. Let us denote this topology by
$\calC$ again. There is a canonical morphism $\calC\to\mathfrak T$. This
morphism induces the functors $s$ and $i$. To study the functor of \enum 4,
consider the morphism $h\colon\set\ast\to (\text{Schemes}/R)$, $\ast\mapsto R$.
Then $h^\ast$ is the functor ``constant sheaf'' and $h_\ast$ is the functor
$\HH^0(R,-)$.
\par
There are the following extended adjointness properties for the functors \enum 1
-- \enum 4 and for the tensor product with its adjoint $\shom$.

\begin{prop}\label{rhom_adjoint}\index{adjoint functor}\index{tensor product}\index{tensor
product!derived tensor product} Let $\phi_\ast$ be one of the functors of
proposition \ref{functor_sheaf}, \enum 1 -- \enum 4. There are canonical
isomorphisms
\begin{enumprop}
  \item $\phi_\ast\shom(\phi^\ast X,Y)\isoto\shom(X,\phi_\ast Y)$.
  \item $\hom(X\tensor Y, Z)\isoto\hom(X,\shom(Y,Z))$.
  \item $\shom(X\tensor Y, Z)\isoto\shom(X,\shom(Y,Z))$,
\end{enumprop}
for suitable sheaves or complexes of sheaves. These isomorphisms extend to the
derived category:
\begin{enumprop}
\setcounter{enumiv}{3}
  \item $\RR \phi_\ast\rshom(\phi^\ast X,Y)\isoto\rshom(X,\RR \phi_\ast Y)$.
  \item $\rhom(X\tensorl Y, Z)\isoto\rhom(X,\rshom(Y,Z))$.
  \item $\rshom(X\tensorl Y, Z)\isoto\rshom(X,\rshom(Y,Z))$,
\end{enumprop}
for suitable complexes $X,Y,Z$ in the derived category. Moreover, the functors
$\tensorl$ and $\rshom$ are adjoint:
\begin{enumprop}
\setcounter{enumiv}{6}
  \item $\hom(X\tensorl Y, Z)\isoto\hom(X,\rshom(Y,Z))$.
\end{enumprop}
\end{prop}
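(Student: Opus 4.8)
The plan is to establish the three underived adjunctions \enum{1}--\enum{3} first and then to bootstrap their derived counterparts \enum{4}--\enum{7} by resolving. Statements \enum{2} and \enum{3} are the external and internal tensor--hom adjunctions, valid in any symmetric monoidal closed category and hence in $\scrS$ and $\scrP$, so I would merely recall them (\cite{MacL}, VII). For \enum{1} the decisive point is that each of the functors $\phi^\ast$ of proposition \ref{functor_sheaf}, \enum{1}--\enum{4}, is \emph{monoidal}: the presheaf-level operation underlying $\phi^\ast$ is either the identity (for the change of topology $R_\fl\to R_\et$ and for the constant-sheaf functor, whose presheaf pullbacks have a terminal comma category) or restriction of presheaves along a functor (for $j^\ast$), and each of these commutes with the sectionwise presheaf tensor product; since $A\tensor B=s(iA\tensor iB)$ and sheafification is exact and monoidal, $\phi^\ast(A\tensor B)\iso\phi^\ast A\tensor\phi^\ast B$ follows. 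Granting this, \enum{1} is a Yoneda computation: for any sheaf $W$, using $\phi^\ast\dashv\phi_\ast$, statement \enum{2} and monoidality,
\[
\hom\bigl(W,\phi_\ast\shom(\phi^\ast X,Y)\bigr)\iso\hom(\phi^\ast W\tensor\phi^\ast X,Y)\iso\hom(\phi^\ast(W\tensor X),Y)\iso\hom(W\tensor X,\phi_\ast Y),
\]
and the right-hand side is naturally $\hom(W,\shom(X,\phi_\ast Y))$ by \enum{2} once more, so \enum{1} follows by the Yoneda lemma in $\scrA$.

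For the derived statements the uniform device is to resolve every Hom-target by injectives and every tensor- or contravariant argument by flats, keeping all resolutions bounded below (which is possible by proposition \ref{finite_res} and the finite $\tor$-dimension hypothesis), and then to invoke two preservation facts: $\phi^\ast$ preserves flat sheaves (proposition \ref{flat_preserve}), and $\shom(P,J)$ is injective whenever $P$ is flat and $J$ is injective (since $P\tensor-$ is then exact, so its right adjoint $\shom(P,-)$ preserves injectives). Concretely, for \enum{4} I would take a flat resolution $P\to X$ and an injective resolution $Y\to J$: then $\rshom(\phi^\ast X,Y)=\shom(\phi^\ast P,J)$ is a complex of injective, in particular $\phi_\ast$-acyclic, sheaves because $\phi^\ast P$ is flat, so $\RR\phi_\ast\rshom(\phi^\ast X,Y)=\phi_\ast\shom(\phi^\ast P,J)$, and by \enum{1} this equals $\shom(P,\phi_\ast J)=\rshom(X,\RR\phi_\ast Y)$. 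For \enum{5} and \enum{6}, I would resolve $Y$ by a bounded-below flat complex $P$, so that $X\tensorl Y=X\tensor P$, and $Z$ by an injective complex $J$; then $\rshom(Y,Z)=\shom(P,J)$ is again a complex of injectives, whence
\[
\rshom\bigl(X,\rshom(Y,Z)\bigr)=\shom\bigl(X,\shom(P,J)\bigr)=\shom(X\tensor P,J)=\rshom(X\tensorl Y,Z)
\]
by the internal adjunction \enum{3}; running the same chain with the outer $\shom$/$\rshom$ replaced by $\hom$/$\rhom$ and \enum{2} in place of \enum{3} yields \enum{5}. Finally \enum{7} is obtained by applying $\HH^0$ to \enum{5} and using $\hom(-,-)=\HH^0\rhom(-,-)$ from proposition \ref{der_adj}.

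I expect the real work to lie not in any single deep step but in two routine-but-delicate places. First, the monoidality of $\phi^\ast$ for all four functors of proposition \ref{functor_sheaf} has to be checked from the explicit descriptions of the presheaf-level pullbacks and of the sheaf tensor product as a sheafified presheaf tensor product, since stalks are not available on a general site. Second, there is a persistent boundedness bookkeeping: one must choose the flat and injective resolutions bounded below and make sure that none of the auxiliary complexes $\phi^\ast X$, $X\tensorl Y$, $\rshom(Y,Z)$ leaves $\DD^+\!\scrS$ — this is exactly what proposition \ref{finite_res} and the finite $\tor$-dimension hypothesis guarantee, and it is the reason the statement is phrased only for ``suitable'' complexes. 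The preservation facts themselves are either already recorded in the excerpt ($\phi_\ast$ and $\phi^\ast$ on injectives and flats) or immediate from the adjointness formalism ($\shom(\mathrm{flat},-)$ on injectives).
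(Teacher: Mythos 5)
Your proof is correct and follows essentially the same route as the paper: reduce the derived statements \enum 4--\enum 6 to the underived ones by taking flat resolutions in the tensor/contravariant slots and injective resolutions in the Hom-target, using that $\phi^\ast$ preserves flats and that $\shom(\text{flat},\text{injective})$ is injective (the paper's lemma \ref{shomPI}), and obtain \enum 7 by applying $\HH^0$. The only difference is cosmetic: for \enum 1--\enum 3 the paper simply cites Milne, whereas you sketch the Yoneda/monoidality argument.
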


To prove this proposition, we need the following well-known lemma

\begin{lemma}\label{shomPI}
Let $P$ be a flat sheaf and let $I$ be an injective sheaf. Then, the sheaf
$\shom(P,I)$ is injective.
\end{lemma}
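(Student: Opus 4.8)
Show that for $P$ flat and $I$ injective, $\shom(P,I)$ is injective, i.e.\ $\hom(-,\shom(P,I))$ is exact.

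The plan is to use the adjunction between $-\tensor P$ and $\shom(P,-)$. By proposition \ref{rhom_adjoint}\,\enum 2 (tensor-hom adjunction in the category of sheaves), for every sheaf $A$ we have a functorial isomorphism $\hom(A,\shom(P,I))\iso\hom(A\tensor P, I)$. Thus the functor $\hom(-,\shom(P,I))$ is isomorphic to the composite functor $\hom(-\tensor P, I)$.

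The second step is to observe that each of the two functors in this composite is exact. The functor $-\tensor P$ is exact because $P$ is flat (by definition a flat object is acyclic for the tensor product; equivalently, by proposition \ref{sheaf_flat_tf}, $P$ is torsion free, so $-\tensor P=s(i(-)\tensor iP)$ preserves monomorphisms, and it is right exact in any case as the tensor product is right exact). The functor $\hom(-,I)$ is exact precisely because $I$ is injective. Hence the composite $\hom(-\tensor P, I)$ is exact, and therefore so is the isomorphic functor $\hom(-,\shom(P,I))$. This means $\shom(P,I)$ is injective.

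There is essentially no obstacle here: the only point requiring a little care is that ``flat'' in the paper means ``acyclic for $\tensor$'', which one must unwind to ``$-\tensor P$ is exact'', and that the relevant tensor-hom adjunction is the sheaf-theoretic one already recorded in proposition \ref{rhom_adjoint}. Both are available from the preceding material, so the proof is a one-line composition-of-exact-functors argument.
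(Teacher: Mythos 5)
Your proof is correct and is essentially identical to the paper's: both identify $\hom(-,\shom(P,I))$ with the composite $\hom(-\tensor P,I)$ via the tensor--hom adjunction and conclude by exactness of $-\tensor P$ (flatness of $P$) and of $\hom(-,I)$ (injectivity of $I$). Note only that the adjunction you cite is the non-derived statement of proposition \ref{rhom_adjoint}, which the paper establishes independently of this lemma, so there is no circularity.
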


\begin{proof}
Indeed, by the adjunction formula, the functor $\hom(-,\shom(P,I))$ is
isomorphic to the exact functor $\hom(-\tensor P,I)$; hence, $\shom(P,I)$ is
injective.
\end{proof}

\begin{proofof}{proposition \ref{rhom_adjoint}}
The isomorphisms of \enum 1 -- \enum 3 are well-known: \cite{ec}, II, 3.22 for
\enum 1, ibid.\ 3.19 for \enum 3. Let us continue with the ``derived''
isomorphisms: To show \enum 4, we can assume that $X$ consists of flat, and $Y$
of injective objects. By lemma \ref{shomPI}, the sheaf $\shom(f^\ast X,Y)$ is
injective and the formula is \enum 1. In the same way, we can show \enum 5 and
\enum 6. Let $Z$ be a complex of injective objects and let $Y$ be a complex of
flat objects. Then, the formulas of \enum 5 and \enum 6 are the formulas of
\enum 2 and \enum 3. Finally, by taking the $0$-th cohomology of \enum 6 we get
\enum 7.
\end{proofof}

\begin{prop}\label{cup_functor_sheaf}\index{$\cup$-functor}
The functors of proposition \ref{functor_sheaf} are $\cup$-functors.
\end{prop}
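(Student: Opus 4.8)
The plan is to read off definition \ref{cup_fn}: a functor is a $\cup$-functor precisely when it is a pre-$\cup$-functor --- that is, has a left adjoint that is exact and preserves flat objects --- and is moreover equipped with a cup-pairing. In each of the five adjoint pairs of proposition \ref{functor_sheaf} the candidate $\cup$-functor is the \emph{right} adjoint, namely $i$, $f_\ast$, $j_\ast$, $\HH^0(R,-)$ and $\HH^0_k(R,-)$, and its left adjoint is the partner already named there. So I would make two passes over the list: first checking $(\cup_1)$ and $(\cup_2)$, then exhibiting the pairing.

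For the pre-$\cup$-functor part almost everything has been prepared earlier in the section. Sheafification $s$ is exact and carries flat sheaves to flat sheaves by corollary \ref{prop_flat1}; the pullbacks $f^\ast$ and $j^\ast$ are exact and preserve flat objects by proposition \ref{flat_preserve}; the constant-sheaf functor is the pullback $h^\ast$ along the morphism of sites $h\colon\set\ast\to(\text{Schemes}/R)$, so the same two results apply to it; and the left adjoint of $\HH^0_k(R,-)$ factors as $G\mapsto i_\ast(h_k^\ast G)$, where $h_k^\ast$ is a pullback along a morphism of sites and $i_\ast$ is the pushforward along the closed immersion $i\colon\spec k\into\spec R$ --- the latter exact because $i$ is a closed immersion, and both factors flat-preserving again by proposition \ref{flat_preserve}. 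No real computation is needed here.

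For the cup-pairing I would invoke lemma \ref{pair_copair}: it is enough to give, for each left adjoint $\phi^\ast$, a cap-pairing $\phi^\ast(A\tensor B)\to\phi^\ast A\tensor\phi^\ast B$ of $\phi^\ast$ with itself, which the lemma then transports to a cup-pairing $\phi_\ast\cup\phi_\ast\to\phi_\ast$ on $\phi_\ast$. The mechanism is that every one of these left adjoints commutes with the tensor product: sheafification and the pullbacks along morphisms of sites or of schemes are strong monoidal, essentially by the way the sheaf tensor product $A\tensor B = s(iA\tensor iB)$ is built, so the inverse of the structure isomorphism is the desired cap-pairing (in fact an isomorphism). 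Combined with the previous paragraph this already yields that $i$, $f_\ast$, $j_\ast$ and $\HH^0(R,-)$ are $\cup$-functors.

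The one case I would treat with care is the left adjoint $G\mapsto i_\ast\tilde G$ of \enum 5, since $i_\ast$ is a \emph{right} adjoint and has no formal reason to commute with $\tensor$. Here I would use that for a closed immersion $i$ the pushforward admits the transparent stalkwise description ``the value $A$ over the closed point and $0$ away from it'' (which simultaneously re-proves its exactness), from which compatibility with $\tensor$ is immediate on any site with enough points --- in particular on the \'etale and flat sites of $R$, which is the only situation the applications require --- and invoke the corresponding standard fact about closed immersions in general. So the main obstacle is not a hard step but bookkeeping: matching each of the five cases to the cited general results, and isolating $i_\ast$ in case \enum 5 because it is the only left adjoint on the list that is a pushforward rather than a pullback or a sheafification. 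I would close by noting that the cup-pairings obtained this way are automatically compatible with composition in the sense of proposition \ref{cup_comp0} and corollary \ref{cup_comp}, again because the functors $\phi^\ast$ are monoidal, which is what makes them usable further on.
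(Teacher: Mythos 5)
Your handling of cases \enum 1 -- \enum 4 is essentially identical to the paper's: exactness of the left adjoints, flat-preservation via corollary \ref{prop_flat1} and proposition \ref{flat_preserve}, and a cap-pairing coming from the fact that these left adjoints commute with $\tensor$, converted into a cup-pairing by lemma \ref{pair_copair}. The one non-routine case is \enum 5, and there you take a genuinely different route. The paper does \emph{not} attempt to show that $G\mapsto i_\ast\tilde G$ commutes with the tensor product; it instead builds the cup-pairing for $\HH^0_k(R,-)$ directly from the already-constructed pairings for $\HH^0(R,-)$ and $\HH^0(K,-)$: since $\HH^0_k(R,A)\to\HH^0(K,A)$ is zero by definition, the composite $\HH^0_k(R,A)\tensor\HH^0_k(R,B)\to\HH^0(R,A\tensor B)\to\HH^0(K,A\tensor B)$ vanishes, so the pairing factors through the kernel $\HH^0_k(R,A\tensor B)$ via the left-exact defining sequence. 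That argument costs nothing and works uniformly on the \'etale and flat sites. Your route --- inverting the canonical morphism $i_\ast\tilde A\tensor i_\ast\tilde B\to i_\ast\widetilde{A\tensor B}$ --- can be made to work, but your justification is the weak point: on the flat site the fibre functors of the topos are not ``evaluate at the closed and generic points'', so the stalkwise description of $i_\ast$ you invoke is not available there as stated. A correct repair is a Zariski-localization argument: covering $T$ by the complement of $T_k$ together with opens meeting $T_k$ in single clopen pieces reduces to the case where $T_k$ is connected or empty, where the map is visibly an isomorphism. With that patch your construction goes through (and yields the same pairing); without it, the paper's factorization argument for case \enum 5 is the safer and shorter path.
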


\begin{proof}
It is known that the corresponding left adjoint functors are exact, \cite{ec},
II, introduction to \S3 with II, \S2, 2.6. Using the characterisation of flat
groups and flat sheaves being torsion free, it is clear that the functor
$G\mapsto \tilde G$ preserves flats objects. Using proposition
\ref{flat_preserve}, it follows that the functors $G\mapsto i_\ast \tilde G$,
$f^\ast$ and $j^\ast$ preserve flat objects, too. Finally, we have to construct
canonical cup-pairings. In the case of the functors of \enum 1 -- \enum 4 of
proposition \ref{functor_sheaf}, the functor $\phi^\ast$ even commutes with
tensor products, i.\,e., there exists a canonical cap-pairing morphism
$\phi^\ast (A\tensor B)\isoto\phi^\ast A\tensor \phi^\ast B$. By lemma
\ref{pair_copair}, this isomorphism gives rise to the desired cup-pairing.
\par
Finally, let us show that the functor $\HH^0_k(R,-)$ is equipped with a canonical cup-pairing.
Consider the commutative diagram with exact second line
\begin{diagram}[l>=0.2in,height=0.22in]
  &      & \HH^0_k(R,A)\tensor\HH^0_k(R,B) & \rTo & \HH^0(R,A)\tensor\HH^0(R,B) & \rTo & \HH^0(K,A)\tensor\HH^0(K,B)  \\
  &      & \dDashto  &                     & \dTo &                             & \dTo \\  
0 & \rTo & \HH^0_k(R,A\tensor B)           & \rTo & \HH^0(R,A\tensor B)         & \rTo & \HH^0(K,A\tensor B)
\end{diagram}   
Keep in mind that the first line needs not to be exact. However, it is a
complex. The second and third vertical morphisms are the cup-pairings of
$\HH^0(R,-)$ and $\HH^0(K,-)$.
\end{proof}

Since the functors of proposition \ref{functor_sheaf} are $\cup$-functors, they
admit cup products by theorem \ref{der_cup_prod}:

\begin{thm}\label{cup_prod}\index{cup product}\
Let $\phi_\ast$ be any of the left adjoint functors of proposition
\ref{functor_sheaf}. There is a canonical morphism $\RR \phi_\ast X\tensorl \RR
\phi_\ast Y \to \RR \phi_\ast(X\tensorl Y)$ for complexes $X$, $Y$ in the
derived category $\DD^+\!\scrS_\et$. It induces morphisms
\[
\RR^r\phi_\ast A\tensor \RR^s \phi_\ast B  \longto \RR^{r+s} \phi_\ast(A\tensor B)
\]
for sheaves $A$, $B$ with the properties of theorem \ref{der_cup_prod}. \qed
\end{thm}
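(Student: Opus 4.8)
The plan is to read this off directly from the general machinery of Section~\ref{cup_derived}. First I would note that the category $\scrS_\et$ of abelian sheaves on our fixed site over $R$ meets the standing hypotheses imposed there: it is a symmetric monoidal abelian category with respect to a right-exact tensor product, and it has finite $\tor$-dimension --- indeed $\tor$-dimension $1$ by Proposition~\ref{flat_res} (so by Proposition~\ref{finite_res} the derived tensor product does not leave $\DD^+\!\scrS_\et$). Next, Proposition~\ref{cup_functor_sheaf} tells us that each functor $\phi_\ast$ in the list of Proposition~\ref{functor_sheaf} is a $\cup$-functor; unwinding Definition~\ref{cup_fn}, this means $\phi_\ast$ is a pre-$\cup$-functor --- it has an exact left adjoint $\phi^\ast$ preserving flat objects --- and that it carries a cup-pairing $\phi_\ast\cup\phi_\ast\to\phi_\ast$. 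We are therefore in position to apply Theorem~\ref{der_cup_prod} with $f_\ast=g_\ast=h_\ast=\phi_\ast$.

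Doing so, part~\enum 1 of Theorem~\ref{der_cup_prod} produces the functorial derived cup product $\RR\phi_\ast X\tensorl\RR\phi_\ast Y\to\RR\phi_\ast(X\tensorl Y)$ for $X,Y\in\DD^+\!\scrS_\et$ (note that $\RR\phi_\ast$ indeed keeps us inside $\DD^+$, since a complex bounded from below has an injective resolution bounded from below); and parts~\enum 2--\enum 4 yield the induced pairings $\RR^r\phi_\ast A\tensor\RR^s\phi_\ast B\to\RR^{r+s}\phi_\ast(A\tensor B)$ for sheaves $A,B$, their normalisation (for $r=s=0$ one recovers the cup-pairing one started with) and their compatibility with the connecting morphisms attached to a short exact sequence in either variable. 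The uniqueness clause of Theorem~\ref{der_cup_prod} then identifies these pairings as the canonical ones.

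I do not expect a genuine obstacle here: all the real content was already dispatched in establishing Proposition~\ref{cup_functor_sheaf} --- verifying exactness and flatness-preservation of the left adjoints $\phi^\ast$, and, most notably, constructing the canonical cup-pairing for $\HH^0_k(R,-)$ via the commutative diagram with exact bottom row. What remains for this theorem is purely a matter of quoting Theorem~\ref{der_cup_prod}, so the argument reduces to ``combine Proposition~\ref{cup_functor_sheaf} with Theorem~\ref{der_cup_prod}''.
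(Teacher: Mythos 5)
Your proposal is correct and matches the paper exactly: the theorem carries no separate proof there, being justified by the single sentence preceding it, namely that the functors of Proposition~\ref{functor_sheaf} are $\cup$-functors by Proposition~\ref{cup_functor_sheaf} and hence admit cup products by Theorem~\ref{der_cup_prod}. Your additional checks of the standing hypotheses (finite $\tor$-dimension via Proposition~\ref{flat_res}, stability of $\DD^+$) are implicit in the paper's setup and only make the argument more complete.
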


\begin{remark}
    By the adjunction of $\RR\!\shom$ and $\tensorl$, the cup product morphism
    is the same as a morphism
   \[
   \RR \phi_\ast A\longto\RR\!\shom(\RR \phi_\ast B,\RR \phi_\ast(A\tensorl B)),
   \]
   which, again, we will refer to as the cup product.
\end{remark}

It is an easy calculation to show that this cup product coincides with the
product that is constructed in \cite{ec}, V, \S1, proposition 1.16 using
Godement resolutions. In fact, we have already proven this fact implicitly in
theorem \ref{der_cup_prod}: Both products share the same properties that
characterise a cup product.
\par

%
%

\subsection{The Cup Product of \Cech\ Cohomology}

In some cases, \Cech\ cohomology can be used to compute the ``true'' derived
functor cohomology groups $\HH^\ast(X,-)$. In this section, we show that in this
cases the cup product, which is defined on \Cech\ cocycles, can be used to
compute the cup product of derived functor cohomology. Remember that the \Cech\
functors $\cHH^\ast(\frakU,-)$ for a covering $\frakU$ of $U$ and
$\cHH^\ast(U,-)$ are universal, cohomological $\delta$-functors on the category
$\scrP$ of presheaves, \cite{a62}, theorem 3.1. For a sheaf $F$, both groups
$\cHH^0(\frakU,iF)$ and $\cHH^0(U,iF)$ coincide with the group $\HH^0(U,F)$.
Thus, the relation between \Cech\ cohomology and derived functor cohomology is
described by the \emph{\Cech\ spectral sequences}\index{Cech spectral sequence}
\begin{equation}\label{cech_ss}
\cHH^r(\frakU,\RR^si F)\ssto\HH^{r+s}(U,F)\qquad\text{and}\qquad\cHH^r(U,\RR^si F)\ssto\HH^{r+s}(U,F)
\end{equation}
for the inclusion functor $i\colon\scrP\to\scrS$. Before we can state the first
lemma, we need some notations: Let $\frakU=(U_i\Longto{j_i} U)_{i\in I}$ be a
covering. By $j_{i_0,\ldots, i_n}$, we denote the morphism
$U_{i_0}\times_U\ldots\times_U U_{i_n}\to U$. For an abelian group $G$, we
denote by $\tilde G$ the constant presheaf\index{constant presheaf} associated
to $G$.

\begin{lemma}\index{projective resolution}
Let $\frakU$ be a covering of $U$. The canonical sequence of presheaves
\[
C_\frakU(\bbZ)\colon \ldots\;\pile{\longto\\\longto\\\longto\\\longto}\;\bigoplus_{i_0,i_1,i_2\in I}
(j_{i_0,i_1,i_2})_!\tilde \bbZ \;\pile{\longto\\\longto\\\longto}\; \bigoplus_{i_0,i_1\in I}
(j_{i_0,i_1})_!\tilde \bbZ \;\pile{\longto\\\longto}\; \bigoplus_{i\in I}(j_{i})_!\tilde \bbZ
\]
is a projective resolution of the cokernel of the latter pair of morphisms in
the category $\scrP$ of presheaves. We will denote this cokernel by
$c_\frakU\bbZ$. This presheaf is flat.
\end{lemma}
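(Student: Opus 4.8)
The plan is to check exactness sectionwise, after first making the terms of $C_\frakU(\bbZ)$ completely explicit. Work with presheaves on the $U$-schemes in the topology, so that $j_{i_0\ldots i_n}$ is the structure morphism $U_{i_0}\times_U\ldots\times_U U_{i_n}\to U$; recall that for a morphism $j\colon Y\to U$ in the topology one has $(j_!\tilde\bbZ)(T)=\bbZ[\hom_U(T,Y)]$, the free abelian group on the set of $U$-morphisms $T\to Y$. First I would apply this with $Y=U_{i_0}\times_U\ldots\times_U U_{i_n}$: by the universal property of the fibre product over $U$, for every $U$-scheme $T$ one has $\hom_U(T,U_{i_0}\times_U\ldots\times_U U_{i_n})=\prod_{\nu=0}^{n}\hom_U(T,U_{i_\nu})$, and summing over $(i_0,\ldots,i_n)\in I^{n+1}$ yields a natural identification
\[
C_\frakU(\bbZ)_n(T)\;\iso\;\bbZ[E(T)^{n+1}],
\qquad
E(T)\defeq\coprod_{i\in I}\hom_U(T,U_i).
\]
Under this identification the $n+1$ parallel arrows drawn between levels $n$ and $n-1$ become the $n+1$ coordinate-deleting maps $E(T)^{n+1}\to E(T)^{n}$, so that the differential of $C_\frakU(\bbZ)$ at $T$ is their alternating sum $\partial=\sum_k(-1)^k d_k$.

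Hence, for each $U$-scheme $T$, the complex $C_\frakU(\bbZ)(T)$ is the (unaugmented) chain complex of the simplicial set $E(T)^{\bullet+1}$, that is, of the nerve of the indiscrete groupoid on $E(T)$. If $E(T)=\emptyset$ -- equivalently, if $T$ admits no $U$-morphism into any $U_i$ -- then $\bbZ[E(T)^{n+1}]=0$ for every $n$, so that $C_\frakU(\bbZ)(T)$ and the corresponding value of $c_\frakU\bbZ$ both vanish. If $E(T)\neq\emptyset$, I would fix $e_\ast\in E(T)$ and check that the maps $\bbZ[E(T)^{n+1}]\to\bbZ[E(T)^{n+2}]$, $(e_0,\ldots,e_n)\mapsto(e_\ast,e_0,\ldots,e_n)$, together with $\bbZ\to\bbZ[E(T)]$, $1\mapsto e_\ast$, constitute a contracting homotopy of the augmented complex $\cdots\to\bbZ[E(T)^2]\to\bbZ[E(T)]\to\bbZ\to 0$ -- the identity $\partial s+s\partial=\id$ is a one-line verification. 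In either case $C_\frakU(\bbZ)(T)$ is acyclic in positive degrees, and its $\HH_0$ -- which by the definition of $c_\frakU\bbZ$ is $c_\frakU\bbZ(T)$ -- equals $\bbZ$ or $0$ according as $T$ is or is not covered by $\frakU$. Since kernels, cokernels and homology of presheaves are formed sectionwise, $C_\frakU(\bbZ)$ is thus a resolution of $c_\frakU\bbZ$.

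This resolution is projective, since each term is a direct sum of presheaves $(j_{i_0\ldots i_n})_!\tilde\bbZ$, and such a presheaf is projective in $\scrP$: by the adjunction $j_!\dashv j^\ast$ the functor $F\mapsto\hom_\scrP\bigl((j_{i_0\ldots i_n})_!\tilde\bbZ,F\bigr)=F(U_{i_0}\times_U\ldots\times_U U_{i_n})$ is evaluation at an object of the site, hence exact (cf.\ the remark on page \pageref{enough_proj}), and arbitrary coproducts of projectives are projective. Lastly, $c_\frakU\bbZ$ is flat: by the sectionwise description it takes only the values $\bbZ$ and $0$, hence is torsion free and therefore flat by proposition \ref{sheaf_flat_tf} (equivalently, it embeds as a subpresheaf of the constant presheaf $\tilde\bbZ$, and a subpresheaf of a torsion-free presheaf is torsion free). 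I expect the only substantive step to be the fibre-product computation of $\hom_U(T,U_{i_0}\times_U\ldots\times_U U_{i_n})$ and the ensuing recognition of $C_\frakU(\bbZ)(T)$ as the contractible nerve complex of $E(T)$; once that is in place, exactness is the standard simplicial homotopy and the remaining assertions are routine.
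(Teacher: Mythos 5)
Your proof is correct and follows essentially the same route as the paper's: the paper likewise reduces to the sectionwise picture $\ldots\to\bigoplus_{S\times S}\bbZ\to\bigoplus_S\bbZ$ for a set $S$ depending on the test object and reads off the cokernel as $\bbZ$ or $0$, but it cites Artin's notes for the acyclicity and the remark on projectives, whereas you supply those details yourself (the fibre-product identification of the sections, the contracting homotopy for the nerve of $E(T)$, and the adjunction argument for projectivity). The flatness argument is identical.
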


\begin{proof}
This is \cite{a62}, proof of theorem 3.1 for the proof that the sequence is
acyclic and the remark on page \pageref{enough_proj} that it consists of
projectives. In \cite{a62}, ibid, it is shown that for $V\to U$ this sequence
induces a canonical, exact sequence
\[
\ldots\longto\bigoplus_{S\times S\times S}\longto \bigoplus_{S\times S}\bbZ\longto\bigoplus_S \bbZ
\]
for a set $S$, depending on $V$. The cokernel of the last map,
$c_\frakU\bbZ(V)$, is trivial if $S$ is empty and it is isomorphic to $\bbZ$
otherwise. Hence, this presheaf is flat.
\end{proof}

In \cite{a62}, proof of theorem 3.1 it is shown that this complex induces the
\Cech\ complex via $\check C(\frakU,F)=\hom(C_\frakU(\bbZ),F)$. This resolution
will play an important role in the following. Moreover, since the functor
$\hom(-,F)$ is left exact, we have 
\begin{equation}\label{cech_represent}
 \cHH^0(\frakU,F)=\HH^0(\check C(\frakU,F))=\hom(c_\frakU\bbZ,F).
\end{equation}
This means that the \Cech\ functor is represented by the presheaf $c_\frakU\bbZ$.

\begin{prop}\index{Cech cohomology}\
Let $\frakU=(U_i\to U)_{i\in I}$ be a covering.
\begin{enumprop}
   \item The \Cech\ functor $\cHH^0(\frakU,-)$ is a $\cup$-functor.
   \item In particular, there exists a cup product for $\cGamma_\frakU\defeq\cHH^0(\frakU,-)$.
   \item This cup product induces a cup product for the \Cech\ functor $\cGamma\defeq\cHH^0(U,-)$.
\end{enumprop}
\end{prop}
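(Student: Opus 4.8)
The plan is to deduce everything from the representability \eqref{cech_represent} of the \Cech\ functor by the flat presheaf $c_\frakU\bbZ$ established just above. For \enum 1 the first step is to exhibit a left adjoint of $\cGamma_\frakU\defeq\cHH^0(\frakU,-)=\hom_\scrP(c_\frakU\bbZ,-)$. Combining the tensor--hom adjunction in the category $\scrP$ of presheaves with the sectionwise adjunction between the ``constant presheaf'' functor $G\mapsto\tilde G$ and evaluation, one checks that $G\mapsto\tilde G\tensor c_\frakU\bbZ$ is left adjoint to $\cGamma_\frakU$; this gives property $(\cup_1)$. This left adjoint is exact, being the composite of the exact functor $G\mapsto\tilde G$ with $-\tensor c_\frakU\bbZ$, which is exact because $c_\frakU\bbZ$ is flat by the preceding lemma; and it preserves flat objects, since $\tilde G$ is torsion free --- hence flat by proposition \ref{sheaf_flat_tf} --- whenever $G$ is, and $-\tensor c_\frakU\bbZ$ preserves flatness as $c_\frakU\bbZ$ is flat. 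So $\cGamma_\frakU$ is a pre-$\cup$-functor in the sense of definition \ref{cup_fn}.

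Next I would equip $\cGamma_\frakU$ with a cup-pairing. By the preceding lemma every section group $c_\frakU\bbZ(V)$ is $\bbZ$ or $0$ and every restriction map of $c_\frakU\bbZ$ is an identity or zero; hence the sectionwise multiplications $\bbZ\tensor\bbZ\to\bbZ$ assemble into a canonical isomorphism $c_\frakU\bbZ\tensor c_\frakU\bbZ\isoto c_\frakU\bbZ$ of presheaves. Composing the canonical morphism $\hom_\scrP(c_\frakU\bbZ,A)\tensor\hom_\scrP(c_\frakU\bbZ,B)\to\hom_\scrP(c_\frakU\bbZ\tensor c_\frakU\bbZ,A\tensor B)$ with the inverse of this isomorphism produces the required functorial morphism $\cGamma_\frakU A\tensor\cGamma_\frakU B\to\cGamma_\frakU(A\tensor B)$ --- equivalently, by lemma \ref{pair_copair}, it is the cap-pairing $\tilde G\tensor\tilde H\tensor c_\frakU\bbZ\to\tilde G\tensor\tilde H\tensor c_\frakU\bbZ\tensor c_\frakU\bbZ$ on the left adjoints. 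This makes $\cGamma_\frakU$ a $\cup$-functor, proving \enum 1. Statement \enum 2 then follows at once from theorem \ref{der_cup_prod} applied with $f_\ast=g_\ast=h_\ast=\cGamma_\frakU$, once we recall that the right derived functors of $\cGamma_\frakU$ are the $\cHH^r(\frakU,-)$ --- these being, by hypothesis, the universal cohomological $\delta$-functor extending the left exact functor $\cGamma_\frakU$: we obtain
\[
\cup\colon\cHH^r(\frakU,A)\tensor\cHH^s(\frakU,B)\longto\cHH^{r+s}(\frakU,A\tensor B)
\]
together with all the properties listed in theorem \ref{der_cup_prod}.

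For \enum 3 I would pass to the filtered colimit over the coverings of $U$ ordered by refinement, using the standard presentation $\cHH^r(U,-)=\indlim_\frakU\cHH^r(\frakU,-)$. The refinement maps $\cGamma_\frakU\to\cGamma_{\frakU'}$ are induced by morphisms $c_{\frakU'}\bbZ\to c_\frakU\bbZ$ that are compatible with the canonical isomorphisms $c_\frakU\bbZ\tensor c_\frakU\bbZ\isoto c_\frakU\bbZ$, hence compatible with the cup-pairings just constructed, and therefore --- by the functoriality of the construction in theorem \ref{der_cup_prod} --- with the cup products. Since the tensor product commutes with filtered colimits and any two coverings of $U$ have a common refinement (so that the diagonal is cofinal in the product of the index sets), the cup-pairings and cup products of the $\cGamma_\frakU$ pass to the colimit and define a cup product for $\cGamma\defeq\cHH^0(U,-)$; exactness of filtered colimits shows it still satisfies the properties of theorem \ref{der_cup_prod}.

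The points needing a little care are the identification of $G\mapsto\tilde G\tensor c_\frakU\bbZ$ as the left adjoint and its preservation of flatness --- both routine given proposition \ref{sheaf_flat_tf} and the flatness of $c_\frakU\bbZ$ --- and, in \enum 3, the compatibility of the cup-pairings with refinement, which is where the very special shape of $c_\frakU\bbZ$ is genuinely used.
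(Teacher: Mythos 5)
Your proof is correct and follows essentially the same route as the paper: the left adjoint $G\mapsto\tilde G\tensor c_\frakU\bbZ$ obtained from the representability \eqref{cech_represent}, exactness and flatness-preservation via the flatness of $c_\frakU\bbZ$, the cup-pairing coming from the isomorphism $c_\frakU\bbZ\tensor c_\frakU\bbZ\isoto c_\frakU\bbZ$, and the passage to the colimit over coverings for $\cGamma$. You are in fact somewhat more careful than the paper in spelling out the preservation of flat objects and the compatibility of the pairings with refinement.
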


\begin{proof}
Let us prove that $\cHH^0(\frakU,-)$ has a left adjoint. We define the functor
``constant presheaf with respect to the covering $\frakU$''\index{constant
presheaf!with respect to a covering}, $c_\frakU G$, for an abelian group $G$ by 
\[
c_\frakU G\defeq \tilde G\tensor c_\frakU\bbZ,
\]
where $\tilde G$ is the constant presheaf associated to $G$. An easy calculation
using equation \eqref{cech_represent} shows that the functor $c_\frakU$ is left
adjoint to the \Cech\ functor. 
\par
Since $c_\frakU\bbZ$ is flat, the functor $c_\frakU$ is exact. Since the
presheaf $c_\frakU\bbZ$ consists of the group $\bbZ$ or $0$ at every level, the
tensor product $c_\frakU\bbZ\tensor c_\frakU\bbZ$ is isomorphic to
$c_\frakU\bbZ$. In particular, there is a canonical isomorphism
\begin{equation}\label{cech_cappairing}
c_\frakU\tensor(\tilde F\tensor \tilde G)\isoto(c_\frakU\bbZ\tensor\tilde F)\tensor
(c_\frakU\bbZ\tensor\tilde G),
\end{equation}
This can be regarded as a cap-pairing $c_\frakU\to c_\frakU\cap c_\frakU$ and,
thus, there is a cup-pairing
$\cGamma_\frakU\cup\cGamma_\frakU\to\cGamma_\frakU$.
\par
These facts imply that $\cGamma_\frakU$ admits a cup product. Now, $\cGamma$ is
defined to be $\indlim_\frakU\cGamma_\frakU$, where the limit varies over all
coverings $\frakU$ of $U$. By this limit we define the cup product for the
\Cech-functor $\cGamma$. Remember that $\indlim$ turns flat sheaves into flat
sheaves, remark after proposition \ref{indlim_flat}. Consequently, taking the
direct limit is compatible with the derived tensor product. This way, we obtain
the cup product for $\cGamma$. 
\end{proof}

We want to describe the cup product of the \Cech\ functor: The cap-pairing
morphism of \eqref{cech_cappairing} for $G=F=\bbZ$ can be extended to a
canonical pairing of the resolutions:
\[
\begin{array}{rcl}
C^{r+s}_\frakU(\bbZ) & \longto & C^r_\frakU(\bbZ)\tensor C^s_\frakU(\bbZ) \\
e_{i_0,\ldots,i_{r+s}} & \longmapsto &  e_{i_0,\ldots,i_r}\tensor e_{i_r,\ldots,i_{r+s}}.
\end{array}
\]
In particular, for $r=s=0$ this pairing induces the isomorphism
$c_\frakU\bbZ\isoto c_\frakU\bbZ\tensor c_\frakU\bbZ$ of the above proof.
Moreover, this pairing induces a pairing of \Cech\ complexes as follows: Let
$a=(a_0,\ldots,a_r)\in C^r(\frakU,A)$ and let $b=(b_0,\ldots,b_s)\in
C^s(\frakU,B)$. Their image in $C^{r+s}(\frakU,F\tensor G)$ is given by the
\emph{Alexander-Whitney formula}: \index{Alexander-Whitney formula}
\begin{equation}\label{alex_whit}
(a\cup b)_{i_0,\ldots i_{r+s}}=a_{i_0,\ldots i_r}\tensor b_{i_r,\ldots i_{r+s}},
\end{equation}
cf.\ \cite{ec}, V, \S1, remark 1.19. This suggests that the cup product of
\Cech\ cohomology is given by the Alexander-Whitney formula and, in the case
that sheaf cohomology is given by \Cech\ cohomology, even the cup product is
given by \eqref{alex_whit}. We are going to prove these facts in the following.

\begin{prop}\label{cech_alexwhit}
The cup product defined on \Cech\ cocycles by the Alexander-Whit\-ney-formula
agrees with the cup product of the $\cup$-functor $\cHH^0(\frakU,-)$.
\end{prop}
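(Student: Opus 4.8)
The statement asserts that two a priori different cup products on \v{C}ech cohomology agree: the "abstract" one produced by the $\cup$-functor machinery of Theorem \ref{der_cup_prod} applied to $\cHH^0(\frakU,-)$, and the "concrete" one given on cochains by the Alexander-Whitney formula \eqref{alex_whit}. The key point is that both cup products are characterised by the uniqueness clause of Theorem \ref{der_cup_prod}, namely properties \enum 3 and \enum 4: they must restrict to the given pairing of functors for $r=s=0$ and be compatible with connecting morphisms of short exact sequences. So the plan is: (i) recall that the Alexander-Whitney product is induced by a genuine morphism of the projective resolutions $C_\frakU(\bbZ)$ computing the derived \v{C}ech functors (this is already set up in the excerpt, in the displayed pairing $C^{r+s}_\frakU(\bbZ)\to C^r_\frakU(\bbZ)\tensor C^s_\frakU(\bbZ)$ with $e_{i_0,\ldots,i_{r+s}}\mapsto e_{i_0,\ldots,i_r}\tensor e_{i_r,\ldots,i_{r+s}}$); (ii) observe that $\RR^s\cGamma_\frakU(F)=\HH^s(\hom(C_\frakU(\bbZ),F))$ since $C_\frakU(\bbZ)$ is a projective resolution of $c_\frakU\bbZ$ in presheaves and $\cGamma_\frakU=\hom(c_\frakU\bbZ,-)$; (iii) check that the chain-level Alexander-Whitney pairing, after passing to cohomology via $\hom(-,A)\tensor\hom(-,B)\to\hom(-\tensor-,A\tensor B)$ and the comparison morphism of the resolution with its tensor square, satisfies the two characterising properties; (iv) invoke the uniqueness part of Theorem \ref{der_cup_prod} to conclude the two products coincide.

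More concretely, I would first make precise that the displayed map of resolutions is a chain map lifting the cap-pairing isomorphism $c_\frakU\bbZ\isoto c_\frakU\bbZ\tensor c_\frakU\bbZ$ of \eqref{cech_cappairing} in degree zero (this is the remark "for $r=s=0$ this pairing induces the isomorphism $c_\frakU\bbZ\isoto c_\frakU\bbZ\tensor c_\frakU\bbZ$"). Because $C_\frakU(\bbZ)\tensor C_\frakU(\bbZ)$ is a complex of projectives and its zeroth homology is $c_\frakU\bbZ\tensor c_\frakU\bbZ\iso c_\frakU\bbZ$ (the higher Tor vanishing because $c_\frakU\bbZ$ is flat, as established in the preceding lemma), this chain map is the essentially unique lift, and applying $\hom(-,A)$, $\hom(-,B)$ and composing with the natural pairing of Hom-groups produces the Alexander-Whitney product at the level of derived functors. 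Then I would verify property \enum 3: for $r=s=0$ the induced map on $\HH^0$ is, by the degree-zero statement just recalled, the identity-cap-pairing, i.e.\ the cup-pairing of the $\cup$-functor $\cGamma_\frakU$ we started with. For property \enum 4 (compatibility with $0\to A'\to A\to A''\to 0$), I would note that the connecting morphism for $\RR^\ast\cGamma_\frakU$ is the standard one coming from the long exact sequence of $\hom(C_\frakU(\bbZ),-)$ applied to the short exact sequence, and that the Alexander-Whitney chain map, being a fixed morphism of resolutions independent of the coefficient sheaf, is automatically compatible with these connecting morphisms and with the sheaf-level pairing $\hom(C^\bullet,A'')\tensor\hom(C^\bullet,B)\to\hom((C\tensor C)^\bullet,A''\tensor B)$; a diagram chase then gives the required commutative square \eqref{cup_exact_seq} (and the sign-commuting one with roles of the exact sequence and $B$ swapped). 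By the final assertion of Theorem \ref{der_cup_prod}, any pairing with these properties is \emph{the} cup product, so the Alexander-Whitney product equals the $\cup$-functor cup product.

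**Main obstacle.** The delicate step is matching the comparison morphism $\alpha$ of \eqref{morph_alpha} — which in the abstract construction enters via $\RR^r f_\ast A\tensor\RR^s g_\ast B\to\HH^{r+s}(\RR f_\ast A\tensorl\RR g_\ast B)$ — with the naive "product of cohomology classes represented by cocycles" that underlies the Alexander-Whitney description. In other words, one must know that for the flat resolution $C_\frakU(\bbZ)$ the edge morphism of the relevant K\"unneth/Tor spectral sequence (Proposition \ref{kunneth} and the remark after it, which says the first edge morphism \emph{is} $\alpha$) is exactly the pairing $\HH^r(\hom(C^\bullet,A))\tensor\HH^s(\hom(C^\bullet,B))\to\HH^{r+s}(\hom(C^\bullet,A)\tensor\hom(C^\bullet,B))$ induced by multiplying cocycles. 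Granting that identification (which is where the remark "the first edge morphism induces the morphism $\alpha$ ... cf.\ \cite{ce}, XVII, \S3, proposition 3.1" does the work), the rest is the uniqueness argument, which is formal. I would therefore structure the proof so that the only non-formal input is this identification of $\alpha$ with cocycle multiplication, and everything else is an appeal to Theorem \ref{der_cup_prod}.
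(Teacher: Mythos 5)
Your proposal is correct, but it concludes by a different mechanism than the paper. The paper's proof is a direct computation: using that $C_\frakU(\bbZ)$ is a projective resolution of $c_\frakU\bbZ$ and that $\cGamma_\frakU=\hom(c_\frakU\bbZ,-)$, it identifies $\RR\cGamma_\frakU F$ with the \Cech\ complex, writes the derived cup product as $\hom(C_\frakU(\bbZ),F)\tensorl\hom(C_\frakU(\bbZ),G)\to\hom(C_\frakU(\bbZ),F\tensorl G)$, replaces $F$ and $G$ by flat resolutions so that the complexes $\hom(C_\frakU(\bbZ),F)$ consist of flat groups and $\tensorl$ may be replaced by $\tensor$, and then reads off the Alexander--Whitney formula from the explicit map $e_{i_0,\ldots,i_{r+s}}\mapsto e_{i_0,\ldots,i_r}\tensor e_{i_r,\ldots,i_{r+s}}$ of resolutions. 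You instead verify that the Alexander--Whitney product satisfies properties \enum{3} and \enum{4} of theorem \ref{der_cup_prod} and appeal to the uniqueness clause; this is legitimate and arguably more robust, since it never requires unwinding the derived-category morphism, at the price of the (standard but sign-sensitive) check that a chain-level diagonal approximation is compatible with connecting homomorphisms in both variables. One remark on your ``main obstacle'' paragraph: the identification of $\alpha$ from \eqref{morph_alpha} with multiplication of representing cocycles (via the K\"unneth edge morphism of proposition \ref{kunneth}) is exactly what the paper's \emph{direct} route needs; on your uniqueness route it is not needed at all, since properties \enum{3} and \enum{4} are checked purely at the cochain level. So you should commit to one of the two endgames rather than invoking both; either one closes the proof.
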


\begin{proof}
The complex $C_\frakU(\bbZ)$ is a projective resolution of $c_\frakU(\bbZ)$. Since
$\cHH^0(\frakU,F)=\hom(c_\frakU(\bbZ),F)$, there are canonical isomorphisms
\begin{equation}\label{Cech_RGamma}
\RR\cGamma_\frakU F\isoto\rhom(c_\frakU(\bbZ),F)\isoto  \hom(C_\frakU(\bbZ),F)\isoto
C(\frakU,F).
\end{equation}
The derived cup product of $\cGamma_\frakU$ can therefore be written as the
following morphism in the derived category:
\begin{equation}\label{cech_hom}
\hom(C_\frakU(\bbZ),F) \tensorl \hom(C_\frakU(\bbZ),G)\longto\hom(C_\frakU(\bbZ),F\tensorl G).
\end{equation}
Let us replace $F$ and $G$ by flat, i.\,e.\ torsion free, resolutions. Then, the
complex $\hom(C_\frakU(\bbZ),F)$ consists of direct sums of flat summands
$F(U_i)$ for morphisms $U_i\to U$. Hence, it consists of flat groups.
Consequently, we can replace the derived tensor product $\tensorl$ by the
ordinary tensor product $\tensor$. Consequently, the morphism is given by the
Alexander-Whitney formula.
\end{proof}

Let us emphasise that equation \eqref{Cech_RGamma} states that the derived
functor $\RR\Gamma_\frakU$ of the \Cech\ functor actually \emph{is} the \Cech\
complex relative to the covering $\frakU$.

\begin{remark}\index{group cohomology}\index{cup product!of group cohomology}
Using the same arguments, we can show that cup products of group cohomology can
equivalently be defined by $\cup$-functors and by the Alexander-Whitney formula
on the Bar resolution, cf.\ \cite{Br}, V.3.
\end{remark}

By \Cech\ cohomology, we refer to the cohomology with respect to the functors
$\cHH^\ast(\frakU,-)$ and $\cHH^\ast(U,-)$. We say that \Cech\ cohomology and
derived functor cohomology agree in dimension $r$ if the edge morphism
\[
\cHH^r(\frakU,F)\longto\HH^r(U,F)\qquad\text{or}\qquad\cHH^r(U,F)\longto\HH^r(U,F)
\]
of the \Cech\ spectral sequence \eqref{cech_ss} is an isomorphism. For example,
this is the case for the latter edge morphism for $r=0,1$ .

\begin{thm}\label{cech_true_cup}
If \Cech\ cohomology agrees with derived functor cohomology, the cup product of
\Cech\ cohomology agrees with the cup product of derived functor cohomology. In
particular, it can be calculated by the Alexander-Whitney-formula.
\end{thm}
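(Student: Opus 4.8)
The plan is to reduce the statement to the already-established naturality properties of the derived cup product. The key observation is that both the \Cech\ cup product (for the functors $\cHH^\ast(\frakU,-)$ and $\cHH^\ast(U,-)$) and the derived functor cup product (for $\HH^\ast(U,-)$) are instances of the cup product of $\cup$-functors from Theorem \ref{der_cup_prod}. Moreover, the inclusion functor $i\colon\scrP\to\scrS$ is itself a $\cup$-functor: it is right adjoint to sheafification $s$, which is exact and (by Corollary \ref{prop_flat1}) preserves flat objects, and $s$ commutes with tensor products so that there is a canonical cup-pairing $i\cup i\to i$. Hence, by the Corollary following Proposition \ref{cup_comp0}, the composition $\cHH^0(\frakU,-)=\cHH^0(\frakU,i(-))$ of the $\cup$-functor ``$\cHH^0(\frakU,-)$ on presheaves'' with $i$ is again a $\cup$-functor, and the compatibility of cup-pairings holds provided the relevant square \eqref{precup_compose0} commutes.

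The first main step is therefore to check that the cup-pairing of the $\cup$-functor ``$\cHH^0(\frakU,-)$ on presheaves'' (equivalently, the functor represented by $c_\frakU\bbZ$), the cup-pairing of $i$, and the cup-pairing of $\cHH^0(\frakU,-)$ on sheaves are compatible in the sense of diagram \eqref{precup_compose0}. This is essentially bookkeeping: all three cup-pairings ultimately come from the canonical isomorphism $c_\frakU\bbZ\isoto c_\frakU\bbZ\tensor c_\frakU\bbZ$ of \eqref{cech_cappairing} and from the fact that $s$ commutes with $\tensor$, so the required square commutes by construction. Invoking Corollary \ref{cup_comp} then yields a pairing of the \Cech\ spectral sequence \eqref{cech_ss} with itself, covering the cup product on its abutment $\HH^\ast(U,F)$: that is, the cup product $\cHH^r(\frakU,A)\tensor\cHH^s(\frakU,B)\to\cHH^{r+s}(\frakU,A\tensor B)$ is compatible, via the edge morphisms of \eqref{cech_ss}, with the derived functor cup product of Theorem \ref{cup_prod}.

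The second step concludes: by hypothesis, \Cech\ cohomology agrees with derived functor cohomology, meaning the edge morphisms $\cHH^r(\frakU,F)\to\HH^r(U,F)$ (respectively $\cHH^r(U,F)\to\HH^r(U,F)$) are isomorphisms in the relevant range. The compatibility of the pairing of spectral sequences from the previous step then says precisely that, under these isomorphisms, the \Cech\ cup product is identified with the derived functor cup product. For the $\cHH^\ast(U,-)$ version one passes to the inductive limit over all coverings $\frakU$, using as before (remark after Proposition \ref{indlim_flat}) that $\indlim$ preserves flatness and hence commutes with the derived tensor product, so that the limit of the compatible pairings of spectral sequences is again compatible. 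Finally, the last assertion---that the cup product can be computed by the Alexander--Whitney formula---is immediate from Proposition \ref{cech_alexwhit}, which identifies the \Cech\ cup product of the $\cup$-functor $\cHH^0(\frakU,-)$ with the Alexander--Whitney formula \eqref{alex_whit} on \Cech\ cocycles.

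The main obstacle is the verification in the first step that the three cup-pairings are compatible, i.e.\ that diagram \eqref{precup_compose0} genuinely commutes in this situation: one must be careful that the canonical cup-pairing of $\cHH^0(\frakU,-)$ on presheaves (coming from $c_\frakU\bbZ\tensor c_\frakU\bbZ\iso c_\frakU\bbZ$), the cup-pairing of $i$ (coming from $s(iA\tensor iB)=A\tensor B$), and the sheaf-level cup-pairing all agree after composition. This is a diagram chase rather than a conceptual difficulty, but it is the point where the identification genuinely gets pinned down; everything else is a formal consequence of the machinery of $\cup$-functors developed in Section \ref{cup_derived} together with the spectral sequence comparison of Corollary \ref{cup_comp}.
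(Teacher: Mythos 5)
Your proposal is correct and follows essentially the same route as the paper: the paper's proof likewise reduces the $\cHH^\ast(\frakU,-)$ case to Corollary \ref{cup_comp} (applied to the factorisation of $\HH^0(U,-)$ through the inclusion $i\colon\scrS\to\scrP$ and the presheaf \Cech\ functor, giving a pairing of the \Cech\ spectral sequence compatible with both cup products) together with Proposition \ref{cech_alexwhit}, and handles $\cHH^\ast(U,-)$ by passing to the limit over coverings. You merely make explicit the verification of diagram \eqref{precup_compose0} that the paper leaves implicit.
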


\begin{proof}
For the case of $\cHH^\ast(\frakU,-)$, this is corollary \ref{cup_comp} and
proposition \ref{cech_alexwhit}. In the case of $\cHH^\ast(U,-)$, we take the
limit over all coverings $\frakU$ over the diagram of proposition
\ref{cup_comp0} and continue as in corollary \ref{cup_comp}.
\end{proof}

For use in the next section, let us note

\begin{cor}\label{cech1_cup}
The cup product $\HH^1(U,A)\times\HH^0(U,B)\to\HH^1(U,A\tensor B)$ can be
computed by \Cech\ cohomology. \qed
\end{cor}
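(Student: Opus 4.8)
The plan is to deduce Corollary \ref{cech1_cup} directly from Theorem \ref{cech_true_cup} together with the standard fact that for the \Cech\ functor $\cHH^\ast(U,-)$ (as opposed to $\cHH^\ast(\frakU,-)$ for a fixed covering), the edge morphism $\cHH^r(U,F)\to\HH^r(U,F)$ is an isomorphism for $r=0$ and $r=1$ --- a fact already recalled in the text immediately before the statement of the corollary.

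First I would observe that, by Theorem \ref{cech_true_cup}, whenever \Cech\ cohomology agrees with derived functor cohomology in the relevant dimensions, the derived-functor cup product is computed by the \Cech\ cup product (hence by the Alexander--Whitney formula on \Cech\ cochains). So it suffices to check that the three cohomology groups appearing in the corollary --- $\HH^1(U,A)$, $\HH^0(U,B)$, and $\HH^1(U,A\tensor B)$ --- are all computable by \Cech\ cohomology, i.e.\ that the \Cech\ spectral sequence \eqref{cech_ss} for $\cHH^\ast(U,-)$ degenerates enough in degrees $0$ and $1$. For degree $0$ this is automatic: $\cHH^0(U,F)=\HH^0(U,F)$ for every sheaf $F$, as already noted in the text ($\cHH^0(\frakU,iF)$ and $\cHH^0(U,iF)$ both coincide with $\HH^0(U,F)$). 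For degree $1$, the edge morphism $\cHH^1(U,F)\to\HH^1(U,F)$ is always an isomorphism; this is the classical statement (Artin, \cite{a62}) that the first \Cech\ cohomology computed over all coverings agrees with the first derived functor cohomology, and it is exactly the example ``$r=0,1$'' singled out just before the corollary.

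Having established that, I would finish as follows: apply Theorem \ref{cech_true_cup} with $U$ the object of the site in question, noting that the agreement of \Cech\ and derived functor cohomology holds in the only degrees that enter the pairing $\HH^1(U,A)\times\HH^0(U,B)\to\HH^1(U,A\tensor B)$ --- namely $r=1$, $s=0$, $r+s=1$. The theorem then says the derived functor cup product in these degrees equals the \Cech\ cup product, which by Proposition \ref{cech_alexwhit} is given by the Alexander--Whitney formula on $1$-cochains paired with $0$-cochains. This is precisely the assertion of the corollary.

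I do not expect any real obstacle here: the corollary is a specialisation of Theorem \ref{cech_true_cup} to one particular degree pattern, combined with the well-known low-degree agreement of \Cech\ and derived functor cohomology. The only mild point to be careful about is that one must use the \emph{limit} \Cech\ functor $\cHH^\ast(U,-)=\indlim_\frakU\cHH^\ast(\frakU,-)$ rather than a single covering, since agreement in degree $1$ requires passing to the colimit over all coverings; but this is already incorporated in the statement of Theorem \ref{cech_true_cup} via the treatment of $\cHH^\ast(U,-)$ in its proof. Hence the corollary follows with no further work, which is why it is stated with a bare \qed.
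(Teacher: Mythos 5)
Your argument is exactly the one the paper intends: the corollary is an immediate specialisation of Theorem \ref{cech_true_cup}, using the fact (recalled in the text just before the statement) that the edge morphism $\cHH^r(U,F)\to\HH^r(U,F)$ of the \Cech\ spectral sequence is an isomorphism for $r=0,1$. The paper gives no separate proof beyond the bare \qedsymbol, and your reconstruction, including the remark about needing the limit functor $\cHH^\ast(U,-)$ over all coverings rather than a fixed covering, matches it.
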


We will see an explicit example for such a computation in the proof of proposition
\ref{cup_description}.

\section{The $p$-Part of Grothendieck's Pairing in the Case of Semistable Reduction}

\subsection{Rigid Uniformisation}

In the following, let $R$ be a complete, discrete valuation ring of equal
characteristic $p\neq 0$ with algebraically closed residue class field $k$. Let
$K\defeq\quot R$ be its field of fractions and let $A_K$ be an abelian variety
over $K$ with dual abelian variety $A_K'$.
\par
In this section, we summarise some results on rigid uniformisation of abelian
varieties as we need them in the following. These results are due to \cite{R},
\cite{bl} for rigid uniformisation and \cite{BX96} for formal N\'eron models.
For the following proposition, it is crucial that $K$ is complete:

\begin{prop}[Rigid Uniformisation]\label{rigid_uniform}\index{rigid uniformisation}
Let $R$ and $K$ be as above and let $A_K$ be an abelian variety over $K$ with
semi\-stable reduction.
\begin {enumprop}\index{good reduction}
  \item There exists a $K$-group scheme $E_K$, which is an extension of an abelian variety $B_K$
    with good reduction by a split torus $T_K$ of dimension $d$, such that in
    the category of rigid $K$-groups the abelian variety $A_K$ is isomorphic to
    a quotient of $E_K$ by a split lattice $M_K$ of rank $d$. This means, $M_K$
    is a closed analytic subgroup of $E_K$, which is isomorphic to the constant
    $K$-group scheme $\bbZ^d$. This data can be summarised by the following
    exact   sequences:
    \begin{align*}    
    0\longto M_K\longto E_K\longto A_K\longto 0 &\quad\text{and} \\
    0\longto T_K\longto E_K\longto B_K\longto 0
   \end{align*}    
   where the morphism $E_K\to A_K$ exists in the category of rigid $K$-groups.
  \item Let $A'_K$ be the dual abelian variety whose uniformisation is denoted by $M'_K$,
    $E'_K$, $T'_K$. Then, $M'_K$ is the character group of $T_K$ (and vice
    versa). The abelian varieties $B_K$ and $B'_K$ are dual to each other.
\qed
\end{enumprop}
\end{prop}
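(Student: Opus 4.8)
The statement is a digest of the non-archimedean (rigid-analytic) uniformisation theory of abelian varieties, so the plan is to assemble it from the references rather than reprove it; the only genuine work is checking that the ``split'' hypotheses are automatic in our situation. For part \enum 1 the starting point is Raynaud's uniformisation theorem in the form of \cite{R}, \cite{bl}: an abelian variety over a complete non-archimedean field with semistable reduction sits in a short exact sequence $0\to M_K\to E_K\to A_K\to 0$ of rigid $K$-groups in which $E_K$ is a semi-abelian variety -- an extension of an abelian variety $B_K$ of good reduction by a torus $T_K$ -- and $M_K$ is a lattice whose rank equals the toric rank $d=\dim T_K$. The word ``split'' is where our hypotheses enter: since $R$ is strictly henselian (we assumed $k$ algebraically closed, in particular separably closed) it has no nontrivial unramified extensions, and semistable reduction over such a base is automatically \emph{split} semistable; hence the torus $T_K$ occurring in the uniformisation is split and the lattice $M_K$ is the constant group scheme $\bbZ^d$. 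I would also recall, following the cited sources, that the morphism $E_K\to A_K$ exists only in the category of rigid $K$-groups, so that the two displayed exact sequences must be read analytically.

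For part \enum 2 the point is the self-duality of the uniformisation ``cross''. I would invoke the compatibility of rigid uniformisation with duality of abelian varieties (\cite{bl}, in the formal N\'eron-model language also \cite{BX96}): the dual $A'_K$ again has split semistable reduction with the \emph{same} toric rank $d$, and one obtains its uniformisation data by transposing the cross of $A_K$. Concretely, $B'_K$ is the abelian variety dual to $B_K$; the lattice $M'_K$ is the character group $\hom(T_K,\bbG_m)$ of the torus of $A_K$, and dually $T'_K$ is the torus whose character group is $M_K$, so that $M_K$ and $T'_K$, respectively $M'_K$ and $T_K$, are each other's character groups. Since everything in sight is split there is no Galois descent to perform, and the assertion becomes a direct application of the cited results; I would present it as such, noting only that this duality is the source of the monodromy pairing $M_K\times M'_K\to\bbZ$ used in the rest of the section.

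The main obstacle is not a difficult argument but the reconciliation of frameworks: uniformisation appears in the literature in several mutually translatable guises (Raynaud's formal picture, the rigid picture of \cite{bl}, the formal N\'eron models of \cite{BX96}), and one has to make sure that the split-torus / split-lattice hypotheses really hold in our setting and that the duality isomorphisms are stated for exactly the objects $E_K$, $T_K$, $M_K$ produced in part \enum 1. Because $k$ is algebraically closed this collapses to the single observation that ``semistable'' already means ``split semistable'' here, after which the cited theorems apply verbatim; accordingly I would keep the proof to a short paragraph of citations rather than a reconstruction.
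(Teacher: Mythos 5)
Your proposal matches the paper exactly: the paper gives no proof of this proposition, simply citing \cite{R}, \cite{bl} for rigid uniformisation and \cite{BX96} for the formal N\'eron-model picture, which is precisely your strategy. Your added observation that the split hypotheses are automatic because $k$ is algebraically closed (so $R$ is strictly henselian) is a correct and harmless supplement to what the paper leaves implicit.
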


The exact sequence of rigid uniformisation extends to the level of formal
N\'eron models as follows:

\begin{prop}\label{uniform_neron}\index{formal N\'eron model}\
\begin{enumprop}
  \item The rigid uniformisation above gives rise to an exact sequence of formal N\'eron models:
   \[
   0\longto M\longto E\longto A\longto 0.
   \]
   The induced morphism $E^0\to A^0$ is an isomorphism.
  \item This sequence induces the following exact sequence of groups of components
   \[
    0\longto M\longto\oldphi_E\longto\oldphi_A\longto 0.
   \]
\end{enumprop}
\end{prop}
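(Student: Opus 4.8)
The plan is to obtain everything by applying the formal N\'eron model functor $\mathrm N$ of \cite{BX96} (see also \cite{bl}, \cite{R}) to the rigid uniformisation of proposition~\ref{rigid_uniform}, and then reading off the statements on identity components and on groups of components by a short diagram chase. Since $\mathrm N$ is right adjoint to the generic fibre functor --- this is its defining universal property in \cite{BX96} --- it preserves kernels, so, writing $A=\mathrm N(A_K)$, $E=\mathrm N(E_K)$, $M=\mathrm N(M_K)$, one has $M=\ker(E\to A)$ as formal $R$-group schemes; as $M_K$ is the constant $K$-group $\bbZ^d$, its formal N\'eron model $M$ is the constant formal $R$-group scheme $\bbZ^d_R$, in particular \'etale over $R$. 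The one input that is \emph{not} formal is that $E\to A$ is an epimorphism of formal group schemes, equivalently that $A$ is the quotient $E/M$; for $A_K$ with semistable reduction this is exactly the rigid uniformisation theorem for formal N\'eron models, \cite{BX96}, which I would simply quote. This yields the short exact sequence $0\to M\to E\to A\to 0$ of \enum{1}, and I expect this to be the only real obstacle: left exactness of $\mathrm N$ is a formality of adjunction, whereas epimorphisms are not preserved by right adjoints, so surjectivity of $E\to A$ genuinely uses the geometry behind rigid uniformisation.

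Granting the short exact sequence, I would prove next that $E^0\to A^0$ is an isomorphism. Since $A$ is separated, $M=\ker(E\to A)$ is closed in $E$, so $M\cap E^0$ is a closed subgroup scheme of the identity component $E^0$; being \'etale over $R$ and $0$-dimensional it is finite over $R$, hence trivial because $M\cong\bbZ^d_R$ is torsion free. Moreover $E\to A$ is the quotient of $E$ by the free action of the \'etale group scheme $M$, hence \'etale; thus the composite $E^0\hookrightarrow E\to A$ is \'etale and, having connected source, factors through the open subscheme $A^0$, so $E^0\to A^0$ is \'etale. On special fibres $E^0_k\to A^0_k$ is a surjective homomorphism of connected algebraic $k$-groups of the same dimension $\dim A_K$ (its image is a closed connected subgroup of full dimension) with trivial kernel $M\cap E^0_k=0$, hence an isomorphism; since $R$ is complete, $E^0\to A^0$ is an isomorphism. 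This completes \enum{1}; alternatively, both halves of \enum{1} may be quoted from \cite{BX96}.

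For \enum{2}, observe that $M^0$ is trivial and $\oldphi_M=M$, as $M$ is \'etale over $R$. Apply the snake lemma to the morphism of short exact sequences formed by the identity-component sequences $0\to G^0\to G\to\oldphi_G\to 0$, for $G=M,E,A$, lying over the short exact sequence $0\to M\to E\to A\to 0$ of \enum{1}; one carries this out for the induced exact sequences of fppf sheaves on $\spec k$, where the snake lemma applies, and then descends to the \'etale $R$-schemes $\oldphi_M,\oldphi_E,\oldphi_A$. All three vertical inclusions $G^0\hookrightarrow G$ are monomorphisms, and by \enum{1} the term $M^0$ is trivial and $E^0\to A^0$ is an isomorphism, so the snake sequence collapses to
\[
0\longto\oldphi_M\longto\oldphi_E\longto\oldphi_A\longto 0,
\]
that is, to $0\to M\to\oldphi_E\to\oldphi_A\to 0$. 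Equivalently, one checks directly that $\oldphi_E\to\oldphi_A$ is surjective because $E\to A$ is, with kernel $(M\cdot E^0)/E^0\cong M/(M\cap E^0)=M$; since $k$ is algebraically closed this is nothing but the exact sequence of abelian groups $0\to\bbZ^d\to\bbZ^d\to\oldphi_A\to 0$ with $\oldphi_A$ finite.
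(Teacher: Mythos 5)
Your proof is correct in substance, but it takes a genuinely different (more self-contained) route than the paper, whose entire proof of this proposition is the citation ``See \cite{BX96}, theorem 2.3 and proposition 5.4'' --- i.e.\ both halves, including the component-group sequence, are simply quoted from Bosch--Xarles. You instead quote \cite{BX96} only for the one non-formal input, the exactness of $0\to M\to E\to A\to 0$ at the level of formal N\'eron models, and then derive the isomorphism $E^0\isoto A^0$ and statement \enum 2 by elementary arguments (triviality of the finite subgroup $M\cap E^0$ of $\bbZ^d$, \'etaleness of $E\to A$, comparison of special fibres, snake lemma). This buys a clear view of the logical dependencies --- in particular it makes visible that \enum 2 is a formal consequence of \enum 1 --- at the cost of two points where you should be a little more careful. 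First, ``$\mathrm N$ is right adjoint, hence preserves kernels'' identifies $\mathrm N(M_K)$ only with the kernel computed in the category of \emph{smooth} formal $R$-groups; to identify this with the scheme-theoretic kernel $\ker(E\to A)$ you should either observe directly that this kernel is \'etale (hence smooth, hence satisfies the N\'eron mapping property for $M_K$) or simply take the full exact sequence from \cite{BX96}, as you yourself note is possible. Second, the snake lemma must be applied in an abelian category; your parenthetical direct verification that $\oldphi_E\to\oldphi_A$ is surjective with kernel $M/(M\cap E^0)=M$ is actually the cleaner way to finish and avoids the issue entirely. Neither point is a gap in the overall argument.
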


\begin{proof}
See \cite{BX96}, theorem 2.3 and proposition 5.4.
\end{proof}

To simplify notation, let us write $M$ for both the N\'eron model of $M_K$ and
the (abstract) group which $M_K$ is associated to.
\par

We will need the following notions of duality: Let $G$ be any (abstract) abelian
group. We define $G^\vee$ to be the group $\hom(G,\bbZ)$, and $G^\ast$ to be the
Pontrjagin dual $\hom(G,\bbQZ)$. If $G$ is finite, there is a canonical
isomorphism $G^\ast\isoto\ext^1(G,\bbZ)$, induced by the $\ext$-sequence of
$0\to\bbZ\to\bbQ\to\bbQZ\to 0$.

\begin{remark}
Taking the uniformisation of the dual abelian variety $A_K'$ into consideration,
there is the bijective map \emph{evaluation of characters}
\begin{equation}\label{eval_char}
\begin{array}{cl}
  \oldphi_{T'} & \longto M^\vee=\hom(\hom(T'_K,\bbG_m),\bbZ),\\
  \bar t & \longmapsto(\phi\mapsto\nu(\phi(t)))
\end{array}
\end{equation}
for an element $\bar t\in\oldphi_{T'}=T'(R)\mod T'^0(R)=T'_K(K)\mod T'^0(R)$ and
the discrete valuation $\nu$ on $K$. This map will play a central role in the
following; we will denote it by $\nu^\ast$.
\end{remark}

\subsection{The Monodromy Pairing}\label{sect_monodromy}\index{monodromy pairing}

In \cite{sga7}, Grothendieck suggested comparing the perfect monodromy pairing
to Grothendieck's pairing of groups of components in order to show that
Grothendieck's pairing is perfect in the case of semistable
reduction\index{semistable reduction}. This is done in \cite{We}. We will sketch
the main ideas without giving proofs. In the following, let $A_K$ be an abelian
variety with semistable reduction. Let $M$, $E$ etc.\ be the objects of rigid
uniformisation we obtained in proposition \ref{uniform_neron}.
\par
We have two exact sequences of component groups 
\begin{equation}\label{comp_groups}
\begin{array}{cl}
0\longto M'\longto\oldphi_{E'}\longto\oldphi_{A'}\longto 0\phantom. &\quad\text{and} \\
0\longto M\longto\oldphi_{E}\longto\oldphi_{A}\longto 0.
\end{array}
\end{equation}
Again, we write $M$ for the abstract group $M_K$ is associated to. There is a
canonical isomorphism $\oldphi_T\isoto\oldphi_E$, \cite{BX96}, theorem 4.11,
and, thus, there are isomorphisms 
\begin{equation}\label{phiET_M'}
\oldphi_E\isoto\oldphi_T\isoto M'^\vee.
\end{equation}
The last map is the map ``evaluation of characters'', $\nu^\ast$, as in
\eqref{eval_char}.

\begin{prop}\label{mp_eval_cha}
The isomorphisms obtained from \eqref{phiET_M'} induce a commutative diagram
\begin{diagram}[width=0.5in,tight]
M & \rTo & \oldphi_{E} & \rTo & \oldphi_{A} & \rTo & 0\phantom. \\
\dTo>{\!\wr} && \dTo>{\!\wr} && \dDashto>\sigma \\
\oldphi_{E'}^\vee & \rTo & M'^\vee & \rTo & \ext^1(\oldphi_{A'},\bbZ) & \rTo & 0.
\end{diagram}
In particular, the induced map $\sigma$ is bijective. Hence, it induces a
perfect pairing $\oldphi_A\times\oldphi_{A'}\to\bbQZ$.
\end{prop}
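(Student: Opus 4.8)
The plan is to display both rows of the diagram as short exact sequences, to pin down the two left‑hand vertical isomorphisms explicitly, to reduce the commutativity of the left square to the symmetry of the monodromy pairing, and then to obtain $\sigma$ and its bijectivity by an elementary diagram argument.

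First I would check that both rows are short exact. The top row is the sequence of \eqref{comp_groups} (equivalently proposition \ref{uniform_neron}(ii)), so $M\to\oldphi_E$ is injective; since $M\iso\bbZ^d$ and $\oldphi_E\iso\oldphi_T\iso M'^\vee\iso\bbZ^d$ are both free of rank $d$, the cokernel $\oldphi_A$ is finite. For the bottom row I would apply $\hom(-,\bbZ)$ to the primed sequence $0\to M'\to\oldphi_{E'}\to\oldphi_{A'}\to 0$ of \eqref{comp_groups}: since $\oldphi_{A'}$ is finite, $\hom(\oldphi_{A'},\bbZ)=0$, and since $\oldphi_{E'}\iso M^\vee$ is free, $\ext^1(\oldphi_{E'},\bbZ)=0$, so the long exact $\ext$‑sequence degenerates to $0\to\oldphi_{E'}^\vee\to M'^\vee\to\ext^1(\oldphi_{A'},\bbZ)\to 0$; by the remark preceding the proposition the third term is $\oldphi_{A'}^\ast=\hom(\oldphi_{A'},\bbQZ)$.

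Next I would fix the vertical arrows. The middle one is \eqref{phiET_M'}, i.e.\ $\oldphi_E\iso\oldphi_T\iso M'^\vee$, the last map being $\nu^\ast$. The left one comes from the primed instance of \eqref{phiET_M'} — which is \eqref{eval_char} — namely $\oldphi_{E'}\iso\oldphi_{T'}\iso M^\vee$: applying $\hom(-,\bbZ)$ gives $\oldphi_{E'}^\vee\iso M^{\vee\vee}$, and since $M\iso\bbZ^d$ is free of finite rank the canonical biduality map $M\iso M^{\vee\vee}$ is an isomorphism, so one takes the composite $M\iso M^{\vee\vee}\iso\oldphi_{E'}^\vee$. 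Here comes the one non‑formal point: one must compute that, after these identifications, the composite $M\into\oldphi_E\iso M'^\vee$ is $m\mapsto(m'\mapsto\langle m,m'\rangle)$, where $\langle\,,\,\rangle\colon M\times M'\to\bbZ$ is the monodromy pairing attached to the uniformisation of $A_K$, while the composite $M\iso\oldphi_{E'}^\vee\to M'^\vee$ (the second arrow being the bottom row) is $m\mapsto(m'\mapsto\langle m',m\rangle')$ with $\langle\,,\,\rangle'\colon M'\times M\to\bbZ$ the monodromy pairing of $A_K'$; this uses $M'_K=X^\ast(T_K)$ and $M_K=X^\ast(T'_K)$ (proposition \ref{rigid_uniform}(ii)) together with compatibility of the isomorphism $\oldphi_T\iso\oldphi_E$ of \cite{BX96}, 4.11, with passage to the dual. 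Hence the left square commutes precisely because $\langle m,m'\rangle=\langle m',m\rangle'$ for all $m,m'$, i.e.\ because the monodromy pairing is symmetric under the canonical identification of the data of $A_K$ and $A_K'$; I would invoke this as a known fact (\cite{bl}; cf.\ also \cite{sga7}, exp.\ IX, and \cite{We}). This symmetry computation is the main obstacle, since it forces one to unwind the functoriality of all these identifications with respect to the duality of $A_K$ and $A_K'$.

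Granting the left square, the remainder is formal: it is a commutative square with isomorphic vertical arrows lying over the inclusions $M\into\oldphi_E$ and $\oldphi_{E'}^\vee\into M'^\vee$, so it induces a well‑defined homomorphism $\sigma\colon\oldphi_A=\coker(M\to\oldphi_E)\to\coker(\oldphi_{E'}^\vee\to M'^\vee)=\ext^1(\oldphi_{A'},\bbZ)$ making the right square commute; by the snake lemma the kernel and cokernel of $\sigma$ are trapped between those of the two vertical isomorphisms, so $\sigma$ is an isomorphism. Finally, composing $\sigma$ with $\ext^1(\oldphi_{A'},\bbZ)\iso\oldphi_{A'}^\ast=\hom(\oldphi_{A'},\bbQZ)$ gives an isomorphism $\oldphi_A\iso\hom(\oldphi_{A'},\bbQZ)$, which — both groups being finite — is the same datum as a perfect pairing $\oldphi_A\times\oldphi_{A'}\to\bbQZ$.
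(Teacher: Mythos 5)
Your proposal is correct and follows essentially the same route as the paper: obtain the bottom row by applying $\hom(-,\bbZ)$ to the primed sequence of \eqref{comp_groups} (using that $\oldphi_{E'}$ is free and $\oldphi_{A'}$ is finite), reduce the commutativity of the left square to the symmetry/compatibility of the monodromy pairing as established in \cite{We}, sections 3 and 4, and then deduce $\sigma$ and its bijectivity formally, identifying $\ext^1(\oldphi_{A'},\bbZ)$ with $\oldphi_{A'}^\ast$. You make explicit the unwinding of the left vertical arrow via biduality and correctly isolate the one non-formal input, which the paper likewise delegates to the literature.
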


\nopagebreak 
We will refer to this pairing of component groups as the \emph{monodromy
pairing}.

\begin{proof}
The exact sequence in the first line is clear. The second one is obtained by
applying $\hom(-,\bbZ)$ to the first sequence of \eqref{comp_groups}; it yields
the exact sequence
\[
\hom(\oldphi_{E'},\bbZ)\longto\hom(M',\bbZ)\longto\ext^1(\oldphi_{A'},\bbZ)\longto\ext^1(\oldphi_{E'},\bbZ).
\]
Since $\oldphi_{E'}$ is free, the latter group is trivial. Hence, we have the
exact sequence of the second line. The commutativity of the first square is
shown in \cite{We}, section 3 and 4. It induces the morphism $\sigma$ that is
clearly bijective. Since the group $\ext^1(\oldphi_{A'},\bbZ)$ is isomorphic to
$\hom(\oldphi_{A'},\bbQZ)$, the isomorphism $\sigma$ induces a perfect pairing
of groups of components.
\end{proof}

\begin{prop}\label{mp_gp}\label{mp_gp2}\index{Grothendieck's pairing}
In the situation of semistable reduction, the monodromy pairing and
Grothen\-dieck's pairing of groups of components coincide up to sign. That is,
the following diagram commutes up to sign
\begin{diagram}
\oldphi_{T} & \rTo & \oldphi_{A}\\
\dTo>{\nu^\ast} && \dTo>\gp \\
M'^\vee & \rTo & \oldphi_{A'}^\ast &.
\end{diagram}
In particular, in the case of an abelian variety with semistable reduction,
Grothen\-dieck's pairing is perfect.
\end{prop}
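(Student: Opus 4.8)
The plan is to identify Grothendieck's pairing with the monodromy pairing of proposition~\ref{mp_eval_cha} by means of the rigid and formal uniformization, following \cite{We}; the work then lies in matching that description of the pairing with the connecting map $\sigma$, signs included. The first step is to recall that Grothendieck's pairing $\oldphi_A\times\oldphi_{A'}\to\bbQZ$ is, by construction, the obstruction to extending the Poincar\'e biextension of $(A_K,A_K')$ by $\bbG_m$ from $A^0\times_R A'^0$ to all of $A\times_R A'$, and then to pull this biextension back along the uniformization morphisms $E_K\to A_K$ and $E_K'\to A_K'$ of proposition~\ref{rigid_uniform}. Under this pullback it becomes the canonical biextension attached to the pair of mutually dual $1$-motives $[M_K\to E_K]$ and $[M_K'\to E_K']$, in which the split torus $T_K$ and the character lattice $M'=X^\ast(T_K)$ play dual roles by proposition~\ref{rigid_uniform}(ii).

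The second step is to localise the obstruction. Because $B_K$ has good reduction, the abelian part of the pulled-back biextension extends over $R$ with no obstruction, and because $E^0\cong A^0$ by proposition~\ref{uniform_neron}(i), the obstruction is concentrated on the split-torus factor; hence everything is governed by $T_K$ together with $M'=X^\ast(T_K)$. For a split torus this is explicit: the N\'eron model of $\bbG_m$ over $R$ has component group $\bbZ$, generated by a uniformizer, and the relevant biextension is the tautological evaluation pairing, so passing to component groups and taking valuations produces exactly the map ``evaluation of characters'' $\nu^\ast\colon\oldphi_T\isoto M'^\vee$ of \eqref{eval_char}. Writing Grothendieck's pairing of $a\in\oldphi_A$ with $a'\in\oldphi_{A'}$ in the standard way --- lift $a$ to $\oldphi_E\cong\oldphi_T\cong M'^\vee$, choose $n\geq 1$ with $na'=0$ lifted from $m'\in M'$, and take $\tfrac{1}{n}$ of the resulting integer pairing --- the torus computation gives $\pm\,\nu^\ast(\text{lift of }a)(m')$, and chasing the diagram of proposition~\ref{mp_eval_cha} identifies this with $\sigma$. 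The sign is forced by the orientation conventions in the biextension obstruction and in the $\ext^1$-sequence of $0\to\bbZ\to\bbQ\to\bbQZ\to 0$.

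The main obstacle is this localisation step: one must check that the biextension obstruction is compatible with pullback along the (non-finite) uniformization and with the induced exact sequences of component groups in proposition~\ref{uniform_neron}, so that ``the obstruction sees only the torus part'' becomes a genuine statement and not merely a slogan --- this is the technical heart of \cite{We}, which I would invoke rather than reprove. Granting $\gp=\pm\sigma$, the concluding assertion is then immediate: $\sigma$ was shown to be bijective in proposition~\ref{mp_eval_cha}, the groups $\oldphi_A$ and $\oldphi_{A'}$ are finite, and a bijective map $\sigma\colon\oldphi_A\isoto\oldphi_{A'}^\ast$ is precisely the datum of a perfect pairing $\oldphi_A\times\oldphi_{A'}\to\bbQZ$.
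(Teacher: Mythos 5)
Your proposal is correct and follows essentially the same route as the paper: the paper's entire proof of this proposition is the citation ``See \cite{We}, proposition 5.1,'' and your sketch is an unpacking of Werner's argument (pulling the Poincar\'e biextension back along the rigid uniformisation, reducing to the split torus, and matching with $\nu^\ast$ and the $\ext^1$-connecting map of proposition~\ref{mp_eval_cha}), with the technical heart still delegated to \cite{We} exactly as the paper does. The concluding deduction of perfectness from the bijectivity of $\sigma$ is also the intended one.
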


\begin{proof}
See \cite{We}, proposition 5.1.
\end{proof}

\subsection{Bester's Pairing}

In \cite{bes}, M.\ Bester constructed Bester's pairing using \emph{local
cohomology}. Let us review the theory of local cohomology in the framework of
\'etale cohomology. (cf.\ \cite{ec} pages 73--78)
\par
The topological space $\spec R$ consists of two points and there are two
immersions
\[
\spec k\Longto i \spec R\Longot j \spec K,
\]
the first being a closed immersion, the second being an open immersion. We
consider the functor $\HH^0_k(R,-)$ on the category of abelian sheaves on $\spec
R$ which is defined by the exact sequence
\begin{equation}\label{loc_coh_def}
0\longto \HH^0_k(R,F)\longto\HH^0(R,F)\longto\HH^0(K,F)
\end{equation}
for a sheaf $F$ on the flat or the \'etale site of $\spec R$. Obviously, it is
left exact so we can define its derived functors
$\HH^r_k(R,-)\defeq\RR^r\HH^0_k(R,-)$. Of course, there is a long exact
cohomology sequence for every exact sequence, but there is another important
sequence for every sheaf $F$:

\begin{prop}[Sequence of Local Cohomology]\label{seq_loc_coh}
\varhspace Let $F$ be a sheaf. Then, sequence \eqref{loc_coh_def} is part of the
\emph{long exact sequence of local cohomology}:
\[
\ldots\longto \HH^i_k(R,F)\longto\HH^i(R,F)\longto\HH^i(K,F)\longto\HH^{i+1}_k(R,F)\longto\ldots
\]
for every $i\geq 0$.
\end{prop}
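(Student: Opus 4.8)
The plan is to realise $\HH^0_k(R,-)$ as the kernel of the natural transformation $\HH^0(R,-)\to\HH^0(K,-)$ coming from restriction along the open immersion $j\colon\spec K\into\spec R$, and then to derive this situation in the derived category $\DD^+\!\scrS$. Writing $\Gamma = \HH^0(R,-) = h_\ast$ and $j_\ast$ for the pushforward along $j$, the composite $j_\ast j^\ast$ is left exact and restriction gives a natural transformation $\id\to j_\ast j^\ast$; applying $\Gamma$ yields $\Gamma\to\Gamma j_\ast j^\ast = \HH^0(K,-)$, and $\Gamma_k$ is by definition (sequence \eqref{loc_coh_def}) the kernel of this map. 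So on the level of honest functors we have a left-exact sequence $0\to\Gamma_k\to\Gamma\to\Gamma j_\ast j^\ast$, and the content of the proposition is that after deriving, the map $\Gamma\to\Gamma j_\ast j^\ast$ becomes surjective "on the nose", i.e. the cofibre of $\RR\Gamma_k F\to\RR\Gamma F$ is exactly $\RR(\Gamma j_\ast j^\ast)F = \RR\Gamma(K,F)$.

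First I would set up the distinguished triangle. One way: for an injective sheaf $I$ on $\spec R$, the sequence $0\to\Gamma_k I\to\Gamma I\to \Gamma(K,I)\to 0$ is exact, because the restriction map $\HH^0(R,I)\to\HH^0(K,I)$ is surjective for injective $I$ — this is the standard fact that the adjunction counit, or rather the surjectivity of sections onto an open, holds for injectives (equivalently $j^\ast$ preserves injectives since $j_!$ is exact, so $\RR^{>0}j_\ast I = 0$ and moreover $I\to j_\ast j^\ast I$ is surjective with kernel the "sections supported on $k$" sheaf $i_\ast i^! I$, which is again injective). Hence, taking an injective resolution $F\to I^\bullet$, the sequence of complexes $0\to\Gamma_k I^\bullet\to\Gamma I^\bullet\to\Gamma(K,I^\bullet)\to 0$ is degreewise short exact, and its associated long exact cohomology sequence is precisely the asserted sequence of local cohomology, with the connecting homomorphism $\HH^i(K,F)\to\HH^{i+1}_k(R,F)$. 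Here I use that $\Gamma_k I^\bullet$ computes $\HH^\ast_k(R,F)$ (by definition of the derived functor via injective resolutions), that $\Gamma I^\bullet$ computes $\HH^\ast(R,F)$, and that $\Gamma(K,I^\bullet)$ computes $\HH^\ast(K,F)$ — the last point because $j^\ast I^\bullet$ is a complex of injective (hence $\HH^0(K,-)$-acyclic) sheaves on $\spec K$, so $\HH^\ast(\Gamma(K,I^\bullet)) = \RR^\ast\Gamma(K,j^\ast F) = \HH^\ast(K,F)$.

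The main obstacle — really the only nontrivial point — is the surjectivity of $\HH^0(R,I)\to\HH^0(K,I)$ for $I$ injective, together with the acyclicity $\RR^{>0}j_\ast I=0$, so that restriction of an injective resolution on $\spec R$ genuinely computes cohomology on $\spec K$. For the étale and flat sites of the strictly henselian (or just any) discrete valuation ring this is available in \cite{ec}, pages 73--78, which is exactly the reference cited; the key structural input is that $j_!$ is exact (lemma \ref{prop_f!}), whence $j^\ast$ preserves injectives, and that an injective abelian sheaf $I$ on $\spec R$ fits in an exact sequence $0\to i_\ast i^!I\to I\to j_\ast j^\ast I\to 0$ of injectives. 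Granting this, everything else is the standard snake/long-exact-sequence formalism, and I would simply invoke the cited pages of \cite{ec} for the geometric input rather than reprove it. (Alternatively, one can phrase the whole argument as: the functor $i_\ast i^!$, $\id$, $j_\ast j^\ast$ on $\DD^+$ sit in a triangle $\RR i_\ast i^! F\to F\to\RR j_\ast j^\ast F\to$, apply $\RR\Gamma$, and identify $\RR\Gamma\,\RR i_\ast i^! F$ with $\RR\Gamma_k(R,F)$ via $\RR\Gamma\circ\RR i_\ast = \RR(\Gamma i_\ast)$ and $\Gamma i_\ast i^! = \Gamma_k$ — using equation \eqref{comp_der} — then take cohomology.)
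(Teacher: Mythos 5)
Your proposal is correct and follows essentially the same route as the paper: the paper proves this by citing the distinguished triangle $\RR\Gamma_{k,R}\to\RR\Gamma_R\to\RR\Gamma_K\to\RR\Gamma_{k,R}[1]$ of lemma \ref{seq_loc_coh_der}, whose construction (``Motif B'' of \cite{rd}) is exactly your argument that $0\to\Gamma_k I\to\Gamma I\to\Gamma(K,I)\to 0$ is degreewise exact on an injective resolution, the crux being the surjectivity of $\HH^0(R,I)\to\HH^0(K,I)$ for injective $I$ via the monomorphism $j_!\bbZ\into\bbZ$. Your parenthetical reformulation via $\RR i_\ast i^!\to\id\to\RR j_\ast j^\ast$ is precisely the triangle the paper invokes.
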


Of course, this works for both the \'etale and the flat topology.

\begin{proof}
Lemma \ref{seq_loc_coh_der} or \cite{bes}, Appendix, p.\ 172.
\end{proof}

As counterpart to local cohomology, Bester constructs a functor $\scrF$. We will
sketch its construction: As always, let $R$ be a complete, discrete valuation
ring of equal characteristic $p\neq 0$ with algebraically closed residue class
field $k$. By the Cohen structure theorem for complete, local rings, \cite{N},
theorem 31.1 and theorem 31.10, the ring $R$ has a uniquely determined field of
coefficients, which is isomorphic to $k$, and $R$ is isomorphic to $k\llbracket
x\rrbracket$.
\par
Bester constructs the functor $\scrF$ for finite, flat $R$-group schemes $N$,
whose order is a power of $p$, as follows: Choose a smooth, connected, formal
resolution of $N$, i.\,e.\ a family of resolutions
\[
\zeta_i\colon 0\longto N_{R_i}\longto A_i\longto B_i\longto 0
\]
of smooth and connected schemes $A_i$ and $B_i$ over $R_i\defeq R\mod\frakm^i$ such that
$\zeta_i=\zeta_{i+1}\tensor_R R_i$ (cf.\
\cite{bes}, 1, lemma 3.1). We define $\scrF(N)$ to be the pro-sheaf over $\spec k$ associated to
\[
\scrF(N)\defeq(\coker(\pi_1(\alpha_{i\ast}A_i)\to\pi_1(\alpha_{i\ast}B_i)))_{i\in\bbN}.
\]
Here, $\pi_1$ denotes the fundamental group\index{fundamental group} in the sense of proalgebraic
groups\index{proalgebraic group}, \cite{o66}, II.7-4.  Due to the Cohen structure theorem, there
exist canonical morphisms $\alpha_i\colon\spec R_i\to\spec k$. They induce the Weil
restriction\index{Weil restriction}\label{weil_green} functors $\alpha_{i\ast}$, that coincide with
the Greenberg
functor\index{Greenberg functor}, \cite{blr}, p.\ 276. By means of the Greenberg functor, we regard
an $R$-grou as a proalgrabaic group over $k$. Bester shows that $\scrF$ is independent of the resolution chosen
and, indeed, defines a functor.\footnote{Later, we will discuss another approach to Bester's functor $\scrF$.}
\par
The Greenberg functor $(\alpha_{i\ast})_{i\in\bbN}$ has some nice properties: it is exact on smooth
group schemes; more precisely, its first derived functor $\RR^1\alpha_{i\ast} G$ is trivial for
smooth groups $G$ and it respects identity components of smooth group schemes (\cite{bes}, 1, lemma 1.1).
In the following, we will omit the functor $\alpha_{i\ast}$ in the notation and write simply $N$
for the proalgrebraic group $(\alpha_{i\ast}N_{R_i})_{i\in\bbN}$.
\par


%

Bester's pairing is built upon Cartier duality,\index{Cartier duality} $N\times
N^D\to\bbG_m$. Since we are interested in groups $N$ whose order is a power of
$p$, the image of this morphism is contained in some $\mu_{p^n,R}$. In this case,
the Cartier pairing can be equivalently written as
\begin{equation}\label{cartier_p}
N\times N^D\longto\indlim\mu_{p^n,R}\fedeq\mu_{p^\infty},
\end{equation}
for any finite, flat group $N$ whose order is a power of $p$.

The main part of \cite{bes} is devoted to the proof of the following theorem:

\begin{thm}[Bester's Pairing]\label{bester_pairing}\index{Bester's pairing}
\varhspace Let $N$ be a finite, flat $R$-group scheme whose order is a power of
$p$. Then, there exists a perfect pairing
\[
\HH^2_k(R,N)\times \scrF(N^D)\longto\bbQZ.
\]
We will call this pairing \emph{Bester's pairing}.
\qed
\end{thm}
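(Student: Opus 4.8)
The plan is to realise the pairing as a cup-product construction with values in the local cohomology of $\mu_{p^\infty}$, to identify the value group with $\bbQ/\bbZ$ via a trace map, and to establish perfectness by d\'evissage in the variable $N$. This is essentially the strategy of \cite{bes}; the bulk of the work lies in a single bridging step, explained below, and the derived-cup-product formalism of Section~\ref{cup_derived} is what makes the construction clean.

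First I would pin down the value group. Feeding the Kummer sequence $0\to\mu_{p^n,R}\to\bbG_{m,R}\to\bbG_{m,R}\to 0$ on the flat site into the long exact sequence of local cohomology (Proposition~\ref{seq_loc_coh}), and using $\pic(R)=\operatorname{Br}(R)=0$ since $R$ is complete local with algebraically closed residue field, one computes $\HH^i_k(R,\mu_{p^n})=0$ for $i\le 1$ and $\HH^2_k(R,\mu_{p^n})\cong\bbZ/p^n$, with the isomorphism induced by the discrete valuation $\nu$ (the class of a uniformiser $t$ mapping to a generator). Passing to the limit yields a canonical isomorphism $\HH^2_k(R,\mu_{p^\infty})\cong\bbQ_p/\bbZ_p\subseteq\bbQ/\bbZ$; this is the trace map, and it is the same ``evaluation of the valuation'' that occurs in \eqref{eval_char}.

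Next, the construction of the pairing. Cartier duality \eqref{cartier_p} gives $N\tensor N^D\to\mu_{p^\infty}$. One cannot simply cup $\HH^2_k(R,N)$ against $\HH^0_k(R,N^D)$: a finite flat group scheme is separated, so $N^D(R)\into N^D(K)$ and hence $\HH^0_k(R,N^D)=0$. Bester's functor $\scrF(N^D)$ is exactly the substitute carrying the information that is lost here, of a \emph{homotopy}-theoretic rather than cohomological nature. Concretely I would fix a smooth connected formal resolution $0\to N^D_{R_i}\to A_i\to B_i\to 0$, pass to the Greenberg realisations via $\alpha_{i\ast}$ over $k$ (connected proalgebraic groups, on whose smooth terms $\RR^1\alpha_{i\ast}$ vanishes), so that $\scrF(N^D)=\projlim_i\coker(\pi_1(\alpha_{i\ast}A_i)\to\pi_1(\alpha_{i\ast}B_i))$. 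The long exact local-cohomology sequence of this resolution represents a class of $\scrF(N^D)$ through $\HH^1_k$ of the smooth groups $A_i,B_i$; cupping those against $\HH^2_k(R,N)$ by the cup products of Theorem~\ref{cup_prod}, and invoking their compatibility with the connecting morphisms (property \enum 4 of Theorem~\ref{der_cup_prod}), produces a well-defined element of $\HH^2_k(R,\mu_{p^\infty})\cong\bbQ/\bbZ$. It then remains to check that the outcome does not depend on the resolution chosen, which is admissible precisely because $\scrF$ itself does not.

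Perfectness I would prove by d\'evissage. Every finite flat $R$-group scheme of $p$-power order has a filtration whose graded pieces are, $k$ being algebraically closed, among the elementary group schemes $\bbZ/p$, $\mu_p$ and $\alpha_p$; Cartier duality is exact and swaps the two arguments of the pairing, and the pairing is compatible with the long exact sequences in both variables (Theorem~\ref{der_cup_prod}). A five-lemma argument therefore reduces everything to the elementary pieces, for which both sides are computed by hand --- the local cohomology $\HH^2_k(R,-)$ from the sequences $0\to\mu_p\to\bbG_m\to\bbG_m\to 0$, $0\to\bbZ/p\to\bbG_a\to\bbG_a\to 0$, $0\to\alpha_p\to\bbG_a\to\bbG_a\to 0$, and $\scrF$ of the Cartier-dual piece from the Greenberg realisations of $\bbG_a$ and $\bbG_m$ --- and one verifies the trace pairing between them is perfect. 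The main obstacle, and the reason most of \cite{bes} is devoted to it, is the bridging step: getting the combinatorially defined $\scrF$ (Greenberg functor, pro-systems over $\spec k$, Serre's $\pi_1$ of proalgebraic groups) to interact with local cohomology through a cup product, and proving non-degeneracy on the elementary pieces. This forces the finer properties of the Greenberg functor (exactness and preservation of identity components on smooth groups, \cite{bes}, 1, lemma 1.1) and Serre's structure theory of proalgebraic groups into play, since the naive cohomological invariant in the relevant degree vanishes and the detour through proalgebraic homotopy is unavoidable. Everything else --- the trace isomorphism, the d\'evissage, the uniqueness --- is then comparatively formal.
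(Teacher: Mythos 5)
The paper does not actually prove this theorem: it is imported wholesale from \cite{bes} (``the main part of \cite{bes} is devoted to the proof of the following theorem''), so your sketch has to be measured against Bester's argument rather than against anything in the text. Your identification of the value group, $\HH^2_k(R,\mu_{p^n,R})\iso\bbZ\mod p^n$ via the valuation, is correct and agrees with lemma \ref{local_coh_lemma}, and your diagnosis that $\HH^0_k(R,N^D)=0$ forces the detour through $\pi_1$ of proalgebraic groups is exactly the right motivation for $\scrF$. The construction step is stated too loosely, however: you speak of representing a class of $\scrF(N^D)$ ``through $\HH^1_k$ of the smooth groups $A_i,B_i$'' and cupping that against $\HH^2_k(R,N)$, which does not typecheck --- $\scrF$ is a $\pi_1$-construction, not an $\HH^1_k$-construction, and the degrees would land in $\HH^3_k$. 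The actual mechanism (a cup product for $f\colon R_\fl\to R_\et$, followed by Bester's morphism $d$ from $\rshom(\RR f_\ast N^D,\RR f_\ast\mu)$ into the $\rshom$ of the $\RR\scrF$-images, then $\RR\Gamma\!_{k,R}$) is elaborated in the proof of proposition \ref{bester_eval}; this part of your plan is repairable along those lines.

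The genuine gap is the d\'evissage. It is false that every finite, flat $R$-group scheme of $p$-power order admits a filtration with graded pieces among $\bbZ\mod p$, $\mu_p$ and $\alpha_p$: that list classifies the simple objects over the algebraically closed residue field $k$, not over $R=k\llbracket t\rrbracket$. By the Oort--Tate classification in equal characteristic, the group schemes $G_n=\spec\bigl(R[x]\mod(x^p-t^nx)\bigr)$, $n\geq 1$, are finite and flat of order $p$ (hence simple), generically \'etale but not constant, with special fibre $\alpha_p$; they are isomorphic to none of your three elementary pieces, and neither are their Cartier duals, which interpolate between $\mu_p$ and $\alpha_p$. Your five-lemma reduction therefore never reaches a base case for these groups. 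This is precisely why Bester does not argue object by object: his reduction is to the \'etale part, the multiplicative part, and \emph{arbitrary} group schemes of height one, the last being treated uniformly via the identification of $\RR^1f_\ast N$ inside $\Lie N\tensor\Omega^1_{R/k}$ and the residue pairing on differentials (the ingredients quoted at the end of section 3.3, with $\res\circ\dlog=\nu$). Without that replacement for your finite list of elementary pieces, the perfectness argument does not close.
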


\pagebreak[2]

\begin{remarks}
 \item Let $\tilde G$ be the constant group scheme\index{constant group scheme}
  which is associated to an abstract group $G$. Then, we have $\scrF(\tilde
  G)=G$, \cite{bes}, 1, lemma 3.7 and remark
  3.8.
 \item Let $N$ be any finite, flat $R$-group scheme. If the order of $N$ is
   prime to $p$, then $N$ is \'etale. Since $R$ is strictly henselian, $N$ is
   constant. Thus, $\scrF(N)$ is isomorphic to $N(R)=N(K)$, regarded as a
   constant group scheme, and $\HH^2_k(R,N^D)$ is isomorphic to $\HH^1(K,N^D)$
   by proposition \ref{seq_loc_coh} and lemma \ref{local_coh_lemma}, \enum 1. We
   can define a perfect pairing
   \[
   \HH^2_k(R,N)\times \scrF(N^D)\longto\bbQZ
   \]
   for all finite, flat $R$-group schemes by means of the cup product for the
   prime-to-$p$-part and Bester's pairing for the $p$-part of $N$. Later, we
   will see that Bester's pairing is the well suited extension of the cup
   product to the $p$-part.
\end{remarks}

In the following, we want to study the pairing for tori\index{torus}; more
precisely, we want to study Bester's pairing for the group $\mu_{p^n,R}$ and its
Cartier dual $\bbZ\mod p^n$. Let $\scrG$ denote the N\'eron model of
$\bbG_{m,K}$. It is isomorphic to $\scrG\iso\bigcup_{r\in\bbZ} \pi^r\bbG_{m,R}$
for a uniformising element $\pi\in R$. We will encounter the exact sequence
\begin{equation}\label{n_mult_G}
0\longto\mu_{n,R}\longto\scrG\Longto n n\scrG\longto 0
\end{equation}
for $n\in\bbN_{\geq 1}$. Since the $n$-power map of $\bbG_{m,R}$ is epimorphic
in the flat topology, the subgroup $n\scrG$ of $\scrG$ is given by
$n\scrG=\bigcup_{r\in n\bbZ}\pi^r\bbG_{m,R}$. It is clear that this group scheme
is smooth, too.
\par
We need some cohomological facts:

\begin{lemma}\label{local_coh_lemma}
Let $G$ be a smooth $R$-group scheme.
\begin{enumprop}
  \item $\HH^i(R,G)=0$ for all $i\geq 1$.
  \item $\HH^1_k(R,G)=G(K)\mod G(R)$. In particular we have
  \item $\HH^1_k(R,\bbG_{m,R})=\bbZ$, $\HH^1_k(R,\scrG)=0$.
  \item $\HH^2_k(R,\mu_{n,R})=\HH^1_k(R,n\,\scrG)=\bbZ\mod n$ for all $n\in\bbN$.
  \item $\HH^1_k(R,A^0)=\oldphi_A$.
\end{enumprop}
\end{lemma}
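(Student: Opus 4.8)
The plan is to prove the five statements in order, the whole argument running on two ingredients: the long exact sequence of local cohomology of Proposition~\ref{seq_loc_coh}, and the vanishing of cohomology of smooth group schemes over the strictly henselian base $\spec R$. For (i): since $k$ is algebraically closed, $R$ is strictly henselian, so the small \'etale topos of $\spec R$ is punctual and $\HH^i_\et(R,-)$ vanishes in positive degrees; because $G$ is smooth, flat and \'etale cohomology agree on $G$ (\cite{ec}), hence $\HH^i(R,G)=0$ for $i\geq1$ in the flat topology as well. This is the only place where the flat/\'etale comparison enters. For (ii): feed $G$ into the local cohomology sequence
\[
0\longto \HH^0_k(R,G)\longto G(R)\longto G(K)\longto \HH^1_k(R,G)\longto \HH^1(R,G).
\]
The first map is injective because $R\into K$, so $\HH^0_k(R,G)=0$; and $\HH^1(R,G)=0$ by (i); hence $\HH^1_k(R,G)=G(K)/G(R)$.

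The remaining parts are applications of (ii) combined with explicit descriptions of the groups involved. For (iii): $\HH^1_k(R,\bbG_{m,R})=K^\times/R^\times\cong\bbZ$ via the valuation $\nu$, while the N\'eron mapping property gives $\scrG(R)=\bbG_{m,K}(K)=K^\times=\scrG(K)$, so $\HH^1_k(R,\scrG)=0$. For (iv), which I would carry out entirely in the flat topology since $\mu_{n,R}$ is not \'etale when $p\mid n$: the group $n\scrG$ is smooth with generic fibre $\bbG_{m,K}$, and under the identification $\scrG(R)\cong K^\times$ one has $(n\scrG)(R)=\{\,x\in K^\times:\nu(x)\in n\bbZ\,\}$, so (ii) gives $\HH^1_k(R,n\scrG)\cong\bbZ\mod n$; then I would first note $\HH^2_k(R,\scrG)=0$ -- the local cohomology sequence of $\scrG$ squeezes it between $\HH^1(K,\scrG)=\HH^1(K,\bbG_{m,K})=\pic K=0$ and $\HH^2(R,\scrG)=0$ -- and then run the local cohomology sequence attached to \eqref{n_mult_G}, in which the neighbouring terms $\HH^1_k(R,\scrG)$ and $\HH^2_k(R,\scrG)$ both vanish, to conclude $\HH^1_k(R,n\scrG)\isoto\HH^2_k(R,\mu_{n,R})$. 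For (v): $A^0$ is smooth with $(A^0)_K=A_K$, so $A^0(K)=A_K(K)$, which the N\'eron mapping property identifies with $A(R)$; since $A^0(R)$ is exactly the subgroup of $R$-sections reducing into the identity component of $A_k$, part (ii) gives $\HH^1_k(R,A^0)=A(R)/A^0(R)$, and this is $\oldphi_A$ because $A(R)\to A_k(k)$ is surjective with kernel contained in $A^0(R)$ and $k$ is algebraically closed.

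The only genuine obstacle is (i): everything downstream of it is bookkeeping with the local cohomology sequence and the N\'eron mapping property, whereas the vanishing $\HH^i(R,G)=0$ for smooth $G$ in the flat topology rests on the comparison theorem between flat and \'etale cohomology for smooth group schemes, which I would simply cite. Two further points deserve a little care, both in (iv): keeping the flat topology throughout, as already noted, and checking that $\ker(n\colon\scrG\to\scrG)=\mu_{n,R}$, so that \eqref{n_mult_G} really is the short exact sequence one needs.
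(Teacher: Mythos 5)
Your proof is correct and follows essentially the same route as the paper: part (i) rests on the flat/\'etale comparison for smooth groups over the strictly henselian base (the paper cites \cite{DixExp}, theorem 11.7 for exactly this), (ii) is the same excerpt of the local cohomology sequence, and (iii)--(v) are the same explicit identifications via the N\'eron mapping property. The only cosmetic difference is in (iv), where you compute $(n\scrG)(R)=\set{x\in K^\ast : \nu(x)\in n\bbZ}$ directly and read off $K^\ast\mod(n\scrG)(R)\iso\bbZ\mod n$ via the valuation, while the paper packages the same computation into a commutative diagram comparing $n\scrG$ and $\scrG$ with cokernels $i_\ast n\bbZ$ and $i_\ast\bbZ$; both arguments are equivalent.
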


\begin{proof}
\enum 1 is shown in \cite{DixExp}, theorem 11.7. Using this, we can show \enum
2: Let $G$ be a smooth $R$-group scheme. We have the following part of the
sequence of local cohomology:
\[
\HH^0(R,G)\longto\HH^0(K,G)\longto\HH^1_k(R,G)\longto\HH^1(R,G).
\]
Since the latter term is trivial by \enum 1, this implies \enum 2. Since both
$\bbG_{m,R}$ and $\scrG$ are smooth, we get \enum 3 by \enum 2. Let us prove
\enum 4. Consider the exact sequence of \eqref{n_mult_G}. It induces the
following exact cohomology sequence
\[
\HH^1_k(R,\scrG) \longto \HH^1_k(R,{n}\scrG) \longto \HH^2_k(R,\mu_{n,R}) \longto \HH^2_k(R,\scrG).
\]
The group $\HH^1_k(R,\scrG)$ is trivial by \enum 3. Furthermore, the last term
is trivial: The sequence of local cohomology induces the exact sequence
\[
\HH^1(K,\bbG_{m,K})\longto\HH^2_k(R,\scrG)\longto\HH^2(R,\scrG).
\]
We have used $\scrG\tensor_R K=\bbG_{m,K}$. The outer terms of this sequence are
trivial due to Hilbert 90 in the case of $\HH^1(K,\bbG_{m,K})$ and due to \enum
1 in the case $\HH^2(R,\scrG)$. Thus, we have the equality
$\HH^2_k(R,\mu_{n,R})=\HH^1_k(R,n\scrG)$ of \enum 4. Furthermore, since $n\scrG$
is smooth, we can conclude by means of \enum 2 that $\HH^1_k(R,n\scrG)$ is
isomorphic to $(n\scrG)(K)\mod(n\scrG)(R)=K^\ast\mod (n\scrG)(R)$.
\par
Now, consider the diagram
\begin{diagram}
0 & \rTo & \bbG_{m,R} & \rTo & n\scrG & \rTo & i_\ast n\bbZ & \rTo & 0\phantom.  \\
  &      & \dEqual    &      & \dTo  &      & \dTo \\
0 & \rTo & \bbG_{m,R} & \rTo & \scrG & \rTo & i_\ast\bbZ & \rTo & 0.
\end{diagram}
Since $\bbG_{m,R}$ is smooth, this diagram induces a diagram with exact lines of
$R$-valued points. In particular, the cokernel of the second vertical morphism
is $K^\ast\mod (n\scrG)(R)=\HH^1_k(R,n\scrG)$. Obviously, it is isomorphic to
$\bbZ\mod n$.
\par
Finally, \enum 5 is a direct consequence of \enum 2.
\end{proof}

\begin{lemma}\label{coh_f_lemma}
Let $f\colon R_\fl\to R_\et$ denote the canonical morphism of sites induced by
the identity map. Then we have:
\begin{enumprop}
  \item $\RR^rf_\ast G=0$ for a smooth group $G$ and every $r\geq 1$.
  \item $\RR^1f_\ast\mu_{p^n,R}=f_\ast\bbG_{m,R}\mod (f_\ast\bbG_{m,R})^{p^n}$.
    The sheaves $\RR^rf_\ast\mu_{p^n,R}$ are trivial for $r\neq 1$.
  \item $\HH^r_{\et}(R,\RR^1f_\ast\mu_{p^n,R})=\HH^{r+1}_{\fl}(R,\mu_{p^n,R})$.
    These groups are trivial for $r\neq 0$.
  \item
    $\HH^r_{k,\et}(R,\RR^1f_\ast\mu_{p^n,R})=\HH^{r+1}_{k,\fl}(R,\mu_{p^n,R})$.
    For $r=1$, this group is isomorphic to $\bbZ\mod p^n$ and it is trivial for
    $r\neq 1$.
\end{enumprop}
\end{lemma}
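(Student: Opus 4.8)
The plan is to establish the four assertions in order, each from the previous ones and from the cohomological facts collected in lemma \ref{local_coh_lemma}. For \enum 1 I would compute $\RR^r f_\ast G$ through its stalks: $\RR^r f_\ast G$ is the \'etale sheaf associated to the presheaf $U\mapsto\HH^r_\fl(U,G)$, so its stalks are the flat cohomology groups $\HH^r_\fl(-,G)$ of the strict henselizations of $R$ --- namely $\HH^r_\fl(R,G)$ at the closed point, the complete ring $R=k\llbracket x\rrbracket$ being already strictly henselian, and $\HH^r_\fl(K^{\mathrm{sep}},G)$ at the generic point. Both vanish for $r\geq 1$: since $G$ is smooth, flat and \'etale cohomology coincide for $G$ (the comparison theorem for smooth group schemes), and the \'etale cohomology of a strictly henselian local ring, resp.\ of a separably closed field, is trivial in positive degrees. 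Hence $\RR^r f_\ast G=0$ for $r\geq 1$.

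For \enum 2 I would apply $\RR f_\ast$ to the flat Kummer sequence $0\to\mu_{p^n,R}\to\bbG_{m,R}\Longto{p^n}\bbG_{m,R}\to 0$. Since $R$ and everything \'etale over it is reduced of characteristic $p$, the sheaf $f_\ast\mu_{p^n,R}$ vanishes and multiplication by $p^n$ on $f_\ast\bbG_{m,R}$ is injective, while $\RR^r f_\ast\bbG_{m,R}=0$ for $r\geq 1$ by \enum 1. The long exact sequence of $\RR f_\ast$ therefore collapses to the short exact sequence $0\to f_\ast\bbG_{m,R}\Longto{p^n}f_\ast\bbG_{m,R}\to\RR^1 f_\ast\mu_{p^n,R}\to 0$ together with $\RR^r f_\ast\mu_{p^n,R}=0$ for $r\neq 1$, which is exactly \enum 2.

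For \enum 3 and \enum 4 the mechanism is the same. Since $f^\ast$ is exact (cf.\ proposition \ref{cup_functor_sheaf}), the functor $f_\ast$ preserves injectives, so by \eqref{comp_der} there are Leray spectral sequences $\HH^r_\et(R,\RR^s f_\ast F)\ssto\HH^{r+s}_\fl(R,F)$ and --- because the functor of sections with support in the closed point commutes with $f_\ast$ --- also $\HH^r_{k,\et}(R,\RR^s f_\ast F)\ssto\HH^{r+s}_{k,\fl}(R,F)$, for every flat sheaf $F$. By \enum 2 the complex $\RR f_\ast\mu_{p^n,R}$ is $(\RR^1 f_\ast\mu_{p^n,R})[-1]$, so both spectral sequences degenerate and give the asserted isomorphisms $\HH^r_\et(R,\RR^1 f_\ast\mu_{p^n,R})\cong\HH^{r+1}_\fl(R,\mu_{p^n,R})$ and $\HH^r_{k,\et}(R,\RR^1 f_\ast\mu_{p^n,R})\cong\HH^{r+1}_{k,\fl}(R,\mu_{p^n,R})$.

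It then remains to evaluate the flat, resp.\ local, cohomology of $\mu_{p^n,R}$ on the right. From the flat Kummer sequence over $\spec R$, together with $\HH^i_\fl(R,\bbG_{m,R})=0$ for $i\geq 1$ (lemma \ref{local_coh_lemma}, \enum 1; note $\HH^1_\fl=\pic(\spec R)=0$) and $\mu_{p^n}(R)=1$, one sees that $\HH^j_\fl(R,\mu_{p^n,R})$ vanishes for $j\neq 1$; this is \enum 3. The same computation over $\spec K$ --- using $\HH^i_\fl(K,\bbG_{m,R})=0$ for $i\geq 1$ (Hilbert 90, triviality of $\mathrm{Br}(K)$ since $K=k((x))$ is $C_1$, and the standard vanishing in higher degrees) --- gives $\HH^j_\fl(K,\mu_{p^n,R})=0$ for $j\neq 1$ and $\HH^1_\fl(K,\mu_{p^n,R})=K^\ast\mod(K^\ast)^{p^n}$. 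Feeding both into the long exact sequence of local cohomology (proposition \ref{seq_loc_coh}) and using $K^\ast\mod(K^\ast)^{p^n}\cong(\bbZ\mod p^n)\times\bigl(R^\ast\mod(R^\ast)^{p^n}\bigr)$ via the valuation, one obtains $\HH^j_{k,\fl}(R,\mu_{p^n,R})=0$ for $j\neq 2$ and $\HH^2_{k,\fl}(R,\mu_{p^n,R})\cong\bbZ\mod p^n$ (the latter value is also lemma \ref{local_coh_lemma}, \enum 4), which with the displayed isomorphism gives \enum 4. The only non-formal ingredient is the flat--\'etale comparison used in \enum 1; after that the argument is a chain of Kummer sequences, one degenerate Leray spectral sequence and the long exact sequence of local cohomology, and the one point requiring genuine care is ruling out $\HH^j_{k,\fl}(R,\mu_{p^n,R})$ in every degree but $2$ in the last step.
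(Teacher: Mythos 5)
Your proposal is correct and follows essentially the same route as the paper: the Kummer sequence for \enum 2, the two degenerate Leray spectral sequences for \enum 3 and \enum 4, and the long exact sequence of local cohomology together with the valuation to isolate $\HH^2_{k,\fl}(R,\mu_{p^n,R})\iso\bbZ\mod p^n$. The only difference is that for \enum 1 the paper simply cites Grothendieck (\cite{DixExp}, lemma 11.1 with theorem 11.7) where you sketch the standard stalk-wise argument via the smooth flat--\'etale comparison, and you spell out the computation over $K$ that the paper compresses into ``by the same arguments.''
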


\begin{proof}
\enum 1 is \cite{DixExp}, lemma 11.1 with theorem 11.7. \statement 2 is an easy
consequence of the Kummer sequence\index{Kummer sequence}
$0\to\mu_{p^n,R}\to\bbG_{m,R}\stackrel{\tiny p^n}\to\bbG_{m,R}\to 0$ on the flat
site of $\spec R$. It yields the following long exact sequence on the \'etale
site of $\spec R$:
\begin{equation}\label{res_mu_q}
0\longto f_\ast\bbG_{m,R}\Longto{p^n}f_\ast\bbG_{m,R}\longto
\RR^1f_\ast\mu_{p^n,R}\longto\RR^1f_\ast\bbG_{m,R}.
\end{equation}
It is exact on the left, since $f_\ast\mu_{p^n,R}=0$. Now, \enum 1 implies \enum 2.
\par
Statements \enum 3 and \enum 4 follow with the Leray spectral sequences
\begin{align*}
\HH^r_{\et}(R,\RR^sf_\ast\mu_{p^n,R}) & \ssto\HH^{r+s}_{\fl}(R,\mu_{p^n,R}), \\
\HH^r_{k,\et}(R,\RR^sf_\ast\mu_{p^n,R}) & \ssto\HH^{r+s}_{k,\fl}(R,\mu_{p^n,R})
\end{align*}
and with \enum 2: The spectral sequences are degenerate with $E_2^{r,s}=0$ for
$s\neq 1$, hence
$\HH^r_\et(R,\RR^1f_\ast\mu_{p^n,R})=\HH^{r+1}_\fl(R,\mu_{p^n,R})$. Again, the
Kummer sequence induces the long exact cohomology sequence
\[
\HH^r(R,\bbG_{m,R})\Longto{p^n}\HH^r(R,\bbG_{m,R})\longto\HH^{r+1}(R,\mu_{p^n,R})\longto\HH^{r+1}(R,
\bbG_{m,R}).
\]
Since $\HH^r(R,\bbG_{m,R})=0$ for $r\geq 1$, we have $\HH^r(R,\mu_{p^n,R})=0$
for $r\geq 2$.
\par
By the same arguments applied to the sequence of local cohomology, it follows
\enum 4 in the case $r\neq 1$. The case $r=1$ is lemma \ref{local_coh_lemma}.
\end{proof}

In the following, we will often drop the indices ${}_\et$ and ${}_\fl$. However,
it will be unambiguous relative to which site we compute the cohomology groups:
When working relative to the \'etale site, we will use the functor $f_\ast$, to
indicate that $f_\ast N$ refers to the sheaf, defined by the group scheme $N$ on
the small \'etale site. When working on the flat site, we identify the scheme
$N$ with the appropriate sheaf by means of the Yoneda lemma. 
\par
Using these cohomological facts, we want to study Bester's pairing for the
torus $T_K$ and its N\'eron model $T$ of the rigid uniformisation of abelian
varieties. Using lemma \ref{local_coh_lemma}, the group $\HH^1_k(R,T^0)$ is
isomorphic to the group of components of $T$.
\par
Since $T_K$ is split, we can reduce to the case of $T_K\iso\bbG_{m,K}$; thus its
N\'eron model $T$ is isomorphic to $\scrG$ with its identity component
$T^0\iso\bbG_{m,R}$. The exact sequence
\[
0\longto\bbG_{m,R}\Longto\iota p^n\scrG\longto i_\ast p^n\bbZ\longto 0
\]
induces the following exact sequence
\[
\HH^1_k(R,\bbG_{m,R})\Longto\iota\HH^1_k(R,p^n\scrG)\longto\HH^1_k(R,i_\ast p^n\bbZ).
\]
The latter term is trivial: it fits into the se\-quence of local cohomology
(proposition \ref{seq_loc_coh}):
\[
0=\HH^0(K,i_\ast p^n\bbZ)\longto\HH^1_k(R,i_\ast p^n\bbZ)\longto\HH^1(R,i_\ast
p^n\bbZ)=\HH^1(k,p^n\bbZ)=0.
\]
Due to lemma \ref{local_coh_lemma}, there is an epimorphism
\[
\HH^1_k(R,\bbG_{m,R})\Longonto\iota\HH^1_k(R,p^n\scrG)=\HH^2_k(R,\mu_{p^n,R}).
\]
\par
Let $M'=\hom(\bbG_{m,K},\bbG_{m,K})\iso\bbZ$ denote the character group of the
torus $T=\bbG_{m,K}$. Evaluation of characters $\bar
x\mapsto(\phi\mapsto\nu(\phi(x)))$ yields a bijective map
\[
\HH^1_k(R,\bbG_{m,R})=K^\ast\mod R^\ast\Longto{\nu^\ast} M'^\vee.
\]
\par
Finally, the exact sequence $0\to \bbZ\stackrel {p^n}\to\bbZ\to\bbZ\mod p^n\to
0$ induces the connecting morphism of the long $\ext$-sequence
\[
\hom(\bbZ,\bbZ)\Longto\delta\ext^1(\bbZ\mod p^n,\bbZ).
\]
The last group is isomorphic to the Pontrjagin dual of $\bbZ\mod p^n$, hence, we
can write this map as $M'^\vee\to(\bbZ\mod p^n)^\ast$. Since $\scrF(\bbZ\mod
p^n)=\bbZ\mod p^n$, these maps fit into the following diagram:
\begin{equation}\label{bester_eval_diag}
\begin{diagram}[l>=0.4in,midshaft]
\HH^1_k(R,\bbG_{m,R}) & \rOnto^\iota & \HH^2_k(R,\mu_{p^n,R})\phantom. \\
\dTo<{\nu^\ast}             &        & \dTo>{\bp} \\
M'^\vee                & \rTo^\delta & \scrF(\bbZ\mod p^n)^\ast.
\end{diagram}
\end{equation}

To simplify notation, let us write $q\defeq p^n$. The most important result of
this section is the following proposition. It will be crucial to the proof that
Grothendieck's pairing can be described by Bester's pairing.

\begin{prop}\label{bester_eval}\index{Bester's pairing}
Diagram \eqref{bester_eval_diag} commutes, i.\,e.\ Bester's pairing is
compatible with evaluation of characters.
\end{prop}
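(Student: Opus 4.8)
The plan is to unwind Bester's pairing in the single concrete case $N=\mu_{q,R}$ with Cartier dual $N^D=\bbZ\mod q$ (so $q=p^n$), and to recognise it, via the Kummer sequence, as the cup product of local cohomology built in \S\ref{cup_derived}. The ingredients I would use are: $\scrF(\bbZ\mod q)=\bbZ\mod q$ by the first remark after Theorem \ref{bester_pairing}, hence $\scrF(\bbZ\mod q)^\ast=(\bbZ\mod q)^\ast\cong\ext^1(\bbZ\mod q,\bbZ)$; the Cartier pairing \eqref{cartier_p} of $\mu_{q,R}$ with $\bbZ\mod q$ is the tautological map $\mu_{q,R}\tensor(\bbZ\mod q)\to\mu_{q,R}\subset\mu_{p^\infty}$, $\zeta\tensor\bar a\mapsto\zeta^a$; $\HH^1_k(R,\bbG_{m,R})=K^\ast\mod R^\ast\cong\bbZ$ via the valuation $\nu$ (Lemma \ref{local_coh_lemma}\,(iii)), under which $\nu^\ast$ is the canonical identification $K^\ast\mod R^\ast\cong M'^\vee$; and $\HH^2_k(R,\mu_{q,R})\cong\bbZ\mod q$ by Lemma \ref{local_coh_lemma}\,(iv), in such a way that the map $\iota$ of \eqref{bester_eval_diag} is reduction modulo $q$ of $\nu$. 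If it is more convenient to argue \'etale-locally, Lemma \ref{coh_f_lemma}\,(iv) replaces $\HH^2_{k,\fl}(R,\mu_{q,R})$ by $\HH^1_{k,\et}(R,\RR^1 f_\ast\mu_{q,R})$.

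First I would show that, for $N=\mu_{q,R}$, Bester's pairing is the composite of the cup product
\[
\HH^2_k(R,\mu_{q,R})\tensor\HH^0(R,\bbZ\mod q)\longto\HH^2_k\bigl(R,\mu_{q,R}\tensor(\bbZ\mod q)\bigr)\longto\HH^2_k(R,\mu_{q,R})
\]
— the first arrow being the cup product of Theorem \ref{der_cup_prod} for the module cup-pairing $\Gamma_k\cup\Gamma\to\Gamma_k$, which exists just as the cup-pairings of Proposition \ref{cup_functor_sheaf} do, by Definition \ref{cup_fn} and Lemma \ref{pair_copair} (a local section supported on $k$ tensored with a global section is again supported on $k$), the second arrow being induced by Cartier duality — followed by a trace isomorphism $\HH^2_k(R,\mu_{p^\infty})\cong\bbQ_p\mod\bbZ_p\into\bbQZ$. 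Granting this, one evaluates the cup product: by properties (iii)--(iv) of Theorem \ref{der_cup_prod} the cup product with $\HH^0(R,\bbZ)$ is the $\bbZ$-module structure, the map $\HH^0(R,\bbZ)\to\HH^0(R,\bbZ\mod q)$ is surjective, and the composite on coefficients $\mu_{q,R}=\mu_{q,R}\tensor\bbZ\to\mu_{q,R}\tensor(\bbZ\mod q)\to\mu_{q,R}$ (last arrow Cartier) is the identity; hence Bester's pairing comes out as $(\xi,\bar a)\mapsto a\cdot\tr(\xi)\in\tfrac1q\bbZ\mod\bbZ\subset\bbQZ$, where $a\in\bbZ$ is any lift of $\bar a$ and $\tr$ is the composite $\HH^2_k(R,\mu_{q,R})\cong\bbZ\mod q\into\bbQZ$ — provided that Bester's trace is $\indlim_n$ of the isomorphism of Lemma \ref{local_coh_lemma}\,(iv).

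It then remains to compare the two routes around \eqref{bester_eval_diag}. On the bottom-left, $\nu^\ast(\bar x)=\nu(x)$ in $M'^\vee\cong\bbZ$, so its image under the $\ext$-connecting morphism $\delta$ is, via $\ext^1(\bbZ\mod q,\bbZ)\cong(\bbZ\mod q)^\ast$, the character $\bar a\mapsto\tfrac{a\,\nu(x)}{q}\bmod\bbZ$. On the top-right, by Lemma \ref{local_coh_lemma}\,(iv) the class $\iota(\bar x)$ corresponds to $\nu(x)\bmod q$ in $\bbZ\mod q$, so by the previous paragraph
\[
\langle\iota(\bar x),\bar a\rangle_{\bp}=a\cdot\tr(\iota(\bar x))=\tfrac{a\,\nu(x)}{q}\bmod\bbZ=\langle\delta(\nu^\ast(\bar x)),\bar a\rangle,
\]
which is exactly the commutativity of \eqref{bester_eval_diag}.

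The main obstacle is the identification made in the second paragraph: $\scrF$ and Bester's pairing are defined not through local cohomology but through Greenberg/Weil-restriction functors and pro-algebraic fundamental groups, so one must carry the Cartier pairing and — above all — Bester's normalising trace through that formalism in order to see that, for $N=\mu_{q,R}$, the pairing really is ``cup product then $\HH^2_k(R,\mu_{p^\infty})\cong\bbQ_p\mod\bbZ_p$''; here one uses that a smooth formal resolution of $\mu_{q,R}$ is provided by the Kummer sequence $0\to\mu_{q,R}\to\bbG_{m,R}\Longto q\bbG_{m,R}\to0$ with $\bbG_{m,R}$ smooth, together with the uniqueness clause of Theorem \ref{der_cup_prod} to know that Bester's cup product coincides with the $\cup$-functor one. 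A secondary nuisance is sign bookkeeping across the connecting morphisms involved (the Kummer boundary of $\bbG_{m,R}$, the $\ext$-boundary of $\bbZ$, and the local-cohomology boundary $\HH^1(K,-)\to\HH^2_k(R,-)$), which forces one to invoke the ``commutes up to sign'' clause of Theorem \ref{der_cup_prod}\,(iv); but since \eqref{bester_eval_diag} is a square of maps between cyclic groups and both composites are surjections $K^\ast\mod R^\ast\twoheadrightarrow\scrF(\bbZ\mod q)^\ast$, it is enough in the end to fix the sign on a single uniformiser.
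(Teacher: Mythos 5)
Your overall strategy is the right one and is, in outline, the paper's: reduce Bester's pairing for $\mu_{q,R}$ and $\bbZ\mod q$ to the cup product, evaluate that cup product explicitly (\Cech/Alexander--Whitney gives $(\bar x,\bar i)\mapsto\overline{x^i}$, and the local-cohomology boundary $\HH^1(K,\mu_{q})\to\HH^2_k(R,\mu_{q})$ is reduction of the valuation $\nu$), and then compare the two routes around the square by a computation in cyclic groups. Your final comparison of $\delta\circ\nu^\ast$ with $\bp\circ\iota$ is correct as far as it goes. But the step you yourself flag as ``the main obstacle'' --- that Bester's pairing for $\mu_{q,R}$ \emph{is} the composite ``cup product, then trace'' --- is not a nuisance to be deferred; it is the substance of the proof, and your proposed substitute does not close it. The uniqueness clause of Theorem \ref{der_cup_prod} characterises pairings $\RR^rf_\ast A\tensor\RR^sg_\ast B\to\RR^{r+s}h_\ast(A\tensor B)$ that agree with a given cup-pairing in degree $0$ and are $\delta$-compatible in both variables for \emph{all} coefficients; Bester's pairing is not of this form. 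It is defined only for finite flat $p$-power-order groups, takes values in $\hom(\scrF(N^D),\bbQZ)$ rather than in a derived functor of a tensor product, and is built from the Greenberg functor, $\pi_1$ of proalgebraic groups, the auxiliary morphism $d$ of \eqref{F_morph}, and an edge map of an $\ext$-spectral sequence. There is no degree-$0$ agreement to anchor a uniqueness argument on, and no functorial extension to arbitrary coefficients against which to run the dimension-shifting induction.

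What actually has to be done --- and what the paper spends most of its proof on --- is to open up Bester's construction and exhibit the cup product inside it: write the derived cup product for $f\colon R_\fl\to R_\et$ as $\RR f_\ast\mu_{q,R}\to\rshom(\RR f_\ast\bbZ\mod q,\RR f_\ast\mu_{q,R})$, compose with $d$, identify $\RR\scrF\circ\RR f_\ast\mu_{q,R}$ with $\scrF(\mu_{q,R})$ (using $\pi_0(\RR^1f_\ast\mu_{q,R})=0$ and $\pi_1(\RR^1f_\ast\mu_{q,R})=\scrF(\mu_{q,R})$, Lemma \ref{pi_1_f_lemma}), commute $\RR\Gamma\!_{k,R}$ past $\rshom(\bbZ\mod q,-)$ via the ``two of three'' argument on the triangle coming from $0\to\bbZ\to\bbZ\to\bbZ\mod q\to 0$, and then chase the morphism of local-cohomology triangles to see that the boundary $\nu$ intertwines the cup product over $K$ with $\bp$. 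Your Kummer-sequence remark is relevant (it is how one computes $\RR^1f_\ast\mu_{q,R}$ and its connectedness), but by itself it does not substitute for this unwinding. Relatedly, your ``trace'' $\HH^2_k(R,\mu_{p^\infty})\iso\bbQ_p\mod\bbZ_p$ and the mixed cup-pairing $\Gamma\!_k\cup\Gamma\to\Gamma\!_k$ are plausible but are additional unverified identifications riding on the same missing step. As written, the proposal proves the easy outer comparison while assuming the hard inner identity.
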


We will prove this proposition in the setting of derived categories. To do so,
let us recall some basic facts on the derived category. Moreover, we will
translate some previous results into the setting of derived categories. We start
with the following basic ``two of three'' properties for triangulated
categories. They can be seen as a generalisation of the 5-lemma of homological
algebra.

\begin{lemma}
Let
\begin{diagram}
X    & \rTo & Y    & \rTo & Z    & \rTo & X[1] \\
\dTo &      & \dTo &      &      &      & \dTo  \\
X'   & \rTo & Y'   & \rTo & Z'    & \rTo & X'[1]
\end{diagram}
be two distinguished triangles with morphisms $X\to X'$ and $Y\to Y'$ such that
the resulting square commutes. Then there exists a morphism $Z\to Z'$ such that
the resulting diagram is a morphism of triangles. If two of these three
morphisms are isomorphisms, so is the third.
\end{lemma}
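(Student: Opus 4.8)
The plan is to treat the two assertions of the lemma separately. The existence of a morphism $Z\to Z'$ completing the commuting square to a morphism of triangles is literally one of the axioms of a triangulated (equivalently, derived) category: the axiom asserting that any commuting square between the bases of two distinguished triangles extends to a morphism of the full triangles. So for this first part I would simply invoke that axiom and proceed; there is nothing to compute.

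For the second assertion I would first reduce to a single case by rotation. Write $f\colon X\to X'$, $g\colon Y\to Y'$, $h\colon Z\to Z'$ for the three vertical maps. Rotating both triangles, and the morphism between them (which remains a morphism of triangles because the shift functor is additive and functorial), replaces $(f,g,h)$ by $(g,h,f[1])$, and rotating in the other direction gives $(h[-1],f,g)$. Since the shift functor is an equivalence, a morphism is an isomorphism if and only if its shift is. Hence it suffices to prove the single implication ``if the first two vertical maps of a morphism of distinguished triangles are isomorphisms, then so is the third,'' and the other two cases of the lemma follow by applying this to the two rotated morphisms of triangles.

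So I would assume $f$ and $g$ are isomorphisms and show $h$ is. The tool is that for any object $W$ the functor $\hom(W,-)$ is homological, i.e.\ it sends distinguished triangles to long exact sequences. Applying it to both triangles and to the given morphism between them yields a commutative ladder of abelian groups with exact rows, in which the five consecutive vertical maps flanking and including $\hom(W,Z)\to\hom(W,Z')$ are induced by $f$, $g$, $h$, $f[1]$, $g[1]$; of these, the first, second, fourth and fifth are isomorphisms because $f$ and $g$ are. The five-lemma then gives that $\hom(W,h)\colon\hom(W,Z)\to\hom(W,Z')$ is an isomorphism, and since this holds for every object $W$, the Yoneda lemma upgrades it to $h$ being an isomorphism. (One could equally use the contravariant functor $\hom(-,W)$.)

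As for the main obstacle: honestly there is no substantive one, since the whole argument is formal. The only points that need a little care are checking that a rotated morphism of triangles is again a morphism of triangles (immediate from functoriality of the shift) and keeping track of the fact that the shift is an equivalence, so that a conclusion of the form ``$f[1]$ is an isomorphism'' can be converted back to ``$f$ is an isomorphism''; this is precisely what lets the single-case reduction cover all three cases.
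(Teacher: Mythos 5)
Your proposal is correct. The paper's own ``proof'' consists of two citations to Hartshorne's \emph{Residues and Duality} (axiom [TR3] for the existence of $Z\to Z'$, and Proposition~1.1 there for the two-of-three claim); your argument invokes the same axiom for the first part and then carries out exactly the standard proof of the cited proposition (rotation to reduce to one case, the homological functor $\hom(W,-)$ plus the five-lemma, and Yoneda), so it is essentially the same approach with the reference unpacked.
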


\begin{proof}
The first assertion is the axiom [TR3] of \cite{rd}, \S1, the second assertion
is [ibid] proposition 1.1.
\end{proof}

We use the following notations:
\[
  \Gamma\!_{k,R}\defeq \HH^0_k(R,-),\qquad
  \Gamma\!_{R}\defeq \HH^0(R,-),\qquad
  \Gamma\!_{K}\defeq  \HH^0(K,-),
\]
with the corresponding derived functors $\RR\Gamma\!_{k,R}$, $\RR\Gamma\!_R$ and
$\RR\Gamma\!_K$. Since these functors are $\cup$-functors by proposition
\ref{cup_functor_sheaf}, they preserve injective sheaves. This ensures the
existence of various spectral sequences involving those functors.
\par
Let us prove some cohomological facts we need in order to prove proposition
\ref{bester_eval}: There is a ``derived version'' of lemma \ref{coh_f_lemma}:

\begin{lemma}\label{coh_f_der_lemma}\
\begin{enumprop}
  \item $\RR f_\ast\mu_{q,R}  =(\RR^1f_\ast\mu_{q,R})[-1]$.
  \item $\RR\Gamma\!_{k,R,\et}\circ\RR f_\ast\mu_{q,R}=\RR\Gamma\!_{k,R,\fl} \mu_{q,R}
=\HH^2_k(R,\mu_{q,R})[-2]$.
  \item $\RR\Gamma\!_{R,\et}\circ\RR
f_\ast\mu_{q,R}=\RR\Gamma\!_{R,\fl}\mu_{q,R}=\HH^1(R,\mu_{q,R})[-1]$ and analogous over $K$.
  \item $\RR f_\ast\bbZ\mod q=\bbZ\mod q$.
\end{enumprop}
We regard the cohomology objects as a complex concentrated in degrees $1$, $2$,
$1$ and $0$ respectively.
\end{lemma}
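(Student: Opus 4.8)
The plan is to derive all four statements from the ``non-derived'' lemma \ref{coh_f_lemma} (together with lemma \ref{local_coh_lemma} and the composition formula \eqref{comp_der}), using throughout the elementary fact that a bounded-below complex whose cohomology is concentrated in a single degree $n$ is isomorphic, in the derived category, to that cohomology object placed in degree $n$ --- concretely, for such an $X$ the truncation maps $\tau_{\le n}X\to X$ and $\tau_{\le n}X\to\tau_{\ge n}\tau_{\le n}X$ are quasi-isomorphisms. Granting this, \enum 1 is immediate: by lemma \ref{coh_f_lemma}, \enum 2, the sheaves $\RR^rf_\ast\mu_{q,R}$ vanish for $r\neq 1$, so $\RR f_\ast\mu_{q,R}$ has cohomology only in degree $1$. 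Likewise \enum 4 is immediate: the constant sheaf $\bbZ\mod q$ is represented by the constant $R$-group scheme $\bbZ\mod q$, which is finite \'etale and in particular smooth, so $\RR^rf_\ast(\bbZ\mod q)=0$ for $r\geq 1$ by lemma \ref{coh_f_lemma}, \enum 1, while $f_\ast(\bbZ\mod q)$ is the \'etale sheaf represented by the same scheme, namely the constant sheaf $\bbZ\mod q$.

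For \enum 2 and \enum 3 I would first identify the composite on the left with cohomology computed on the flat site. For each $\Gamma\!_\bullet$ among $\Gamma\!_{k,R}$, $\Gamma\!_R$, $\Gamma\!_K$ one has $\Gamma\!_{\bullet,\et}\circ f_\ast=\Gamma\!_{\bullet,\fl}$, because $(f_\ast F)(\spec R)=F(\spec R)$ and $(f_\ast F)(\spec K)=F(\spec K)$ --- the immersion $\spec K\into\spec R$ being open and hence an object of the small \'etale site --- and because $\Gamma\!_{k,R}$ is defined by the same left-exact sequence in either topology. Since $f^\ast$ is exact, $f_\ast$ preserves injective objects, so \eqref{comp_der} applies and yields $\RR\Gamma\!_{\bullet,\et}\circ\RR f_\ast=\RR\Gamma\!_{\bullet,\fl}$. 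Evaluating at $\mu_{q,R}$, it then remains to locate the cohomology of $\RR\Gamma\!_{\bullet,\fl}\mu_{q,R}$: by lemma \ref{coh_f_lemma}, \enum 4, the groups $\HH^j_{k,\fl}(R,\mu_{q,R})$ vanish for $j\geq 1$, $j\neq 2$, and $\HH^0_{k,\fl}(R,\mu_{q,R})\subseteq\mu_q(R)=0$ since $q=p^n$ and $\chr K=p$; hence this complex is concentrated in degree $2$ (with $\HH^2_k(R,\mu_{q,R})\iso\bbZ\mod q$ by lemma \ref{local_coh_lemma}, \enum 4), which is \enum 2. Similarly, over $R$, lemma \ref{coh_f_lemma}, \enum 3, gives $\HH^j_\fl(R,\mu_{q,R})=0$ for $j\geq 2$, and $\HH^0_\fl(R,\mu_{q,R})=\mu_q(R)=0$, so the complex is concentrated in degree $1$, giving the first half of \enum 3.

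The hard part --- and the only place where I must go beyond lemma \ref{coh_f_lemma} --- is the ``analogous over $K$'' half of \enum 3: I need $\RR\Gamma\!_{K,\fl}\mu_{q,R}$ concentrated in degree $1$, i.e.\ $\HH^j_\fl(K,\mu_{q,K})=0$ for $j\neq 1$. Here I would run the flat Kummer sequence $0\to\mu_{q,K}\to\bbG_{m,K}\Longto q\bbG_{m,K}\to 0$ through its long exact cohomology sequence, using $\HH^j_\fl(K,\bbG_{m,K})=\HH^j_\et(K,\bbG_{m,K})$ for the smooth group $\bbG_{m,K}$ together with the vanishing of $\HH^j_\et(K,\bbG_{m,K})$ for $j\geq 1$ --- for $j=1$ this is Hilbert~90, and for $j\geq 2$ it holds because $K=k\llbracket x\rrbracket[x^{-1}]$ is complete with algebraically closed residue field of equal characteristic, hence a $C_1$-field of cohomological dimension $\leq 1$ (so in particular its Brauer group vanishes). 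Combined with $\HH^0_\fl(K,\mu_{q,K})=\mu_q(K)=0$, the Kummer sequence then forces $\HH^j_\fl(K,\mu_{q,K})=0$ for all $j\neq 1$, and the composite identification $\RR\Gamma\!_{K,\et}\circ\RR f_\ast=\RR\Gamma\!_{K,\fl}$ is again \eqref{comp_der}. Everything else in the argument is routine bookkeeping with \eqref{comp_der} and with truncations in the derived category.
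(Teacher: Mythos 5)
Your proposal is correct and follows essentially the same route as the paper: invoke \eqref{comp_der} (legitimate because $f_\ast$ preserves injectives) to identify the composites with flat cohomology, then observe via lemma \ref{coh_f_lemma} that each complex has cohomology in a single degree and conclude by the standard truncation argument. The one place you go beyond the paper is the ``analogous over $K$'' half of \enum 3, which lemma \ref{coh_f_lemma} indeed does not cover and which the paper's proof passes over in silence; your Kummer-sequence argument using Hilbert~90 and the vanishing of the higher cohomology of $\bbG_{m,K}$ (Tsen--Lang, $K=k(\!(x)\!)$ being $C_1$ with $k$ algebraically closed), together with $\mu_q(K)=0$ in characteristic $p$, is the right way to close that gap.
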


\begin{proof}
In the cases of \enum 2 and \enum 3 keep in mind that $f_\ast$ respects
injective sheaves, i.\,e.\ we can use the fundamental equation $\RR\Gamma\circ
\RR f_\ast=\RR(\Gamma\circ f_\ast$), for the appropriate functor $\Gamma_R$ etc.
\par
In all four cases, we have a complex $X$ which has non trivial cohomology in one
level, only (namely $\RR f_\ast\mu_{q,R}$ in level $1$,
$\RR\Gamma\!_{k,R}\mu_{q,R}$ in level 2, etc., see lemma \ref{coh_f_lemma}). By
standard arguments in the derived category, the claimed equality follows.
\end{proof}


There is the following counterpart of the sequence of local cohomology, lemma
\ref{seq_loc_coh}\index{local cohomology}, in the derived world:

\begin{lemma}\label{seq_loc_coh_der}
There is a distinguished triangle
\[
\RR\Gamma\!_{k,R}(X)\longto\RR\Gamma\!_{R}(X)\longto\RR\Gamma\!_{K}(X)\longto\RR\Gamma\!_{k,R}(X)[1]
\]
for every complex $X$.
\end{lemma}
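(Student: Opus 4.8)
The plan is to exhibit the triangle as the one attached to a short exact sequence of complexes of abelian groups. First I would fix a quasi-isomorphism $X\to I\udot$ with $I\udot$ a bounded-below complex of injective sheaves on the chosen site over $\spec R$; then $\RR\Gamma\!_{k,R}(X)=\Gamma\!_{k,R}(I\udot)$ and $\RR\Gamma\!_R(X)=\Gamma\!_R(I\udot)$ directly from the definition of the derived functor via injective resolutions. The one point worth a remark is that the same resolution also computes $\RR\Gamma\!_K(X)$: writing $j\colon\spec K\to\spec R$ for the open immersion, one has $\Gamma\!_K=\Gamma_{\spec K}\circ j^\ast$ with $j^\ast$ exact, and $j^\ast$ preserves injectives because its left adjoint $j_!$ (extension by zero, lemma \ref{prop_f!}) is exact; hence $j^\ast I\udot$ is a complex of injectives on $\spec K$ and $\Gamma\!_K(I\udot)=\RR\Gamma_{\spec K}(j^\ast X)=\RR\Gamma\!_K(X)$.

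Next I would verify that, in each degree $n$, the sequence $0\to\Gamma\!_{k,R}(I^n)\to\Gamma\!_R(I^n)\to\Gamma\!_K(I^n)\to0$ is exact. Exactness on the left and in the middle is immediate from the definition \eqref{loc_coh_def} of $\Gamma\!_{k,R}$ as the kernel of the restriction $\Gamma\!_R\to\Gamma\!_K$. The remaining surjectivity of $\Gamma\!_R(I^n)\to\Gamma\!_K(I^n)$ is the place where injectivity of $I^n$ enters, and it is the one genuinely non-formal step. I would obtain it by observing that, functorially in $F$, the restriction $\Gamma\!_R(F)\to\Gamma\!_K(F)$ is the map $\hom(\tilde\bbZ,F)\to\hom(j_!\tilde\bbZ,F)$ gotten by applying $\hom(-,F)$ to the canonical monomorphism $j_!\tilde\bbZ\into\tilde\bbZ$ of sheaves on $\spec R$ (here $\tilde\bbZ$ is the constant sheaf and $j_!\tilde\bbZ$ its extension by zero from $\spec K$); since $I^n$ is injective, $\hom(-,I^n)$ carries this monomorphism to an epimorphism. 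Equivalently: an injective, hence flasque, sheaf $I$ surjects onto $j_\ast j^\ast I$, so $0\to i_\ast i^{!}I\to I\to j_\ast j^\ast I\to0$ is a short exact sequence of injective sheaves and stays exact after $\Gamma\!_R$.

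Granting this, $0\to\Gamma\!_{k,R}(I\udot)\to\Gamma\!_R(I\udot)\to\Gamma\!_K(I\udot)\to0$ is a short exact sequence of complexes of abelian groups, and the standard passage from such a sequence to a distinguished triangle produces $\Gamma\!_{k,R}(I\udot)\to\Gamma\!_R(I\udot)\to\Gamma\!_K(I\udot)\to\Gamma\!_{k,R}(I\udot)[1]$; by the identifications of the first paragraph this is precisely the asserted triangle, and it is functorial in $X$ because injective resolutions are functorial up to homotopy. Taking cohomology then gives back proposition \ref{seq_loc_coh}. As indicated, the only step I expect to require any real care is the surjectivity in the second paragraph — the ``flasqueness along $\spec K\hookrightarrow\spec R$'' of injective sheaves — while everything else is bookkeeping with injective resolutions and the translation of a short exact sequence of complexes into a triangle.
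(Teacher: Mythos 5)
Your proof is correct and is essentially the argument the paper invokes by citing \cite{rd}, IV, \S1, ``Motif B'': resolve by injectives, use that $j^\ast$ preserves injectives (via the exact left adjoint $j_!$) and that $\hom(-,I^n)$ turns the monomorphism $j_!\tilde\bbZ\into\tilde\bbZ$ into the surjection $\Gamma\!_R(I^n)\to\Gamma\!_K(I^n)$, and pass from the resulting termwise short exact sequence of complexes to a distinguished triangle. You have simply written out the details that the paper leaves to the reference.
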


\begin{proof}
As in \cite{rd}, IV, \S 1, ``Motif B'' (p.\ 218).
\end{proof}

If we take cohomology of this triangle, we obtain the exact sequence of local
cohomology, proposition \ref{seq_loc_coh}.

\begin{lemma}\label{coh_F_lemma}\label{pi_1_f_lemma}\
There are canonical isomorphisms
\begin{enumprop}
  \item $\HH^2_k(R,\scrF(\mu_{q,R}))=\bbZ\mod q$.
  \item $\pi_1(\RR^1f_\ast\mu_{q,R})=\scrF(\mu_{q,R})$.
\end{enumprop}
\end{lemma}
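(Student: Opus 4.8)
The plan is to prove part \enum 2 first and then deduce part \enum 1 from it, the bridge being the description of $\RR^1f_\ast\mu_{q,R}$ in lemma \ref{coh_f_lemma} together with the standard degree shift attached to Serre's fundamental group of proalgebraic groups.

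For part \enum 2, I would start from the Kummer sequence $0\to\mu_{q,R}\to\bbG_{m,R}\Longto{q}\bbG_{m,R}\to 0$ on the flat site. Base changing it along $\spec R_i\to\spec R$, where $R_i\defeq R\mod\frakm^i$, gives a compatible family $0\to\mu_{q,R_i}\to\bbG_{m,R_i}\to\bbG_{m,R_i}\to 0$ of short exact sequences with smooth, connected middle and right-hand terms, hence a smooth connected formal resolution of $\mu_{q,R}$ in the sense of \cite{bes}; by the definition of Bester's functor and its independence of the chosen resolution this gives $\scrF(\mu_{q,R})=\bigl(\coker(\pi_1(\alpha_{i\ast}\bbG_{m,R_i})\Longto{q}\pi_1(\alpha_{i\ast}\bbG_{m,R_i}))\bigr)_{i}$, the middle map being multiplication by $q$ by functoriality of $\pi_1$. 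On the other hand, lemma \ref{coh_f_lemma}, \enum 2, identifies $\RR^1f_\ast\mu_{q,R}$ with the cokernel of the $q$-power map on $f_\ast\bbG_{m,R}$; since $f_\ast\mu_{q,R}=0$ and $\RR^1f_\ast\bbG_{m,R}=0$ by lemma \ref{coh_f_lemma}, \enum 1, this is a short exact sequence $0\to f_\ast\bbG_{m,R}\Longto{q}f_\ast\bbG_{m,R}\to\RR^1f_\ast\mu_{q,R}\to 0$ of \'etale sheaves, which under the Greenberg dictionary --- $f_\ast\bbG_{m,R}$ corresponding to $(\alpha_{i\ast}\bbG_{m,R_i})_i$, and the Greenberg functor being exact on smooth groups --- is a short exact sequence of proalgebraic $k$-groups. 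Now I would feed this into Serre's exact homotopy sequence for $(\pi_0,\pi_1)$: since $f_\ast\bbG_{m,R}$ is connected (the Greenberg functor preserving identity components of smooth groups, \cite{bes}, 1, lemma 1.1), $\pi_0(f_\ast\bbG_{m,R})=0$, so the sequence collapses to $\pi_1(f_\ast\bbG_{m,R})\Longto{q}\pi_1(f_\ast\bbG_{m,R})\to\pi_1(\RR^1f_\ast\mu_{q,R})\to 0$. Comparing this with the displayed formula for $\scrF(\mu_{q,R})$ yields $\pi_1(\RR^1f_\ast\mu_{q,R})=\scrF(\mu_{q,R})$.

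For part \enum 1, I substitute part \enum 2 and set $G\defeq\RR^1f_\ast\mu_{q,R}$, so $\HH^2_k(R,\scrF(\mu_{q,R}))=\HH^2_k(R,\pi_1(G))$. Serre's universal covering sequence gives a short exact sequence $0\to\pi_1(G)\to\tilde G\to G\to 0$ whose simply connected term $\tilde G$ is acyclic; under Bester's dictionary between local cohomology on $\spec R$ and the homological algebra of proalgebraic $k$-groups this acyclicity is exactly what makes $\scrF$ behave like a shifted object, cf.\ \cite{bes}. The long exact local cohomology sequence then gives $\HH^{i}_k(R,\pi_1(G))\iso\HH^{i-1}_k(R,G)$; in particular $\HH^2_k(R,\scrF(\mu_{q,R}))\iso\HH^1_k(R,\RR^1f_\ast\mu_{q,R})$, which is $\bbZ\mod q$ by lemma \ref{coh_f_lemma}, \enum 4. (Equivalently, in the derived category the covering sequence reads $\RR\Gamma_{k,R}(\pi_1(G))=\RR\Gamma_{k,R}(G)[-1]$, while $\RR\Gamma_{k,R}(\RR^1f_\ast\mu_{q,R})=(\bbZ\mod q)[-1]$ by lemma \ref{coh_f_der_lemma}, so $\RR\Gamma_{k,R}(\scrF(\mu_{q,R}))=(\bbZ\mod q)[-2]$.)

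The step I expect to be the real obstacle is not either of the exact-sequence computations but the careful passage between the two settings in which everything is phrased: \'etale sheaves and local cohomology over $\spec R$ versus Serre's proalgebraic $k$-groups with their homotopy groups and universal covers. One must check that $f_\ast\bbG_{m,R}$ really is the proalgebraic group $(\alpha_{i\ast}\bbG_{m,R_i})_i$, that the Kummer short exact sequence remains exact after passing to proalgebraic groups (exactness of the Greenberg functor on smooth groups), and that the universal cover $\tilde G$ is genuinely acyclic for $\HH^\ast_k(R,-)$ under this dictionary --- all of which is in \cite{bes}, but checking that these identifications are compatible with the functoriality used in the homotopy exact sequence is the delicate bookkeeping. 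Everything else is formal.
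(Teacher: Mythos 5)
Your argument for \enum 2 is essentially sound and is a more self-contained variant of the paper's: the paper cites \cite{bes}, 1, lemma 3.7 for $\scrF(\mu_{q,R})=\pi_1(\RR^1\alpha_\ast\mu_{q,R})$ and then identifies $\RR^1\alpha_\ast\mu_{q,R}$ with $\RR^1f_\ast\mu_{q,R}$ via \cite{adt} and the Leray spectral sequence, whereas you re-derive that lemma in this special case from the Kummer resolution and Serre's homotopy sequence. Modulo the Greenberg-dictionary bookkeeping you yourself flag, this is fine.

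The proof of \enum 1, however, has a genuine gap. The paper simply cites the proof of \cite{bes}, 2, lemma 6.2; your replacement rests on the claim that the universal covering sequence $0\to\pi_1(G)\to\bar G\to G\to 0$ yields $\HH^i_k(R,\pi_1(G))\iso\HH^{i-1}_k(R,G)$. This requires $\bar G$ to be acyclic for $\HH^\ast_k(R,-)$, which you assert but do not prove, and --- more seriously --- it conflates two different objects called $G$. The covering sequence lives in the category of proalgebraic $k$-groups, so all three of its terms are supported on the closed point of $\spec R$. But for any abelian sheaf of the form $i_\ast F$ one has $\HH^j_k(R,i_\ast F)\iso\HH^j(k,F)=0$ for $j\geq 1$ (local cohomology sequence together with $j^\ast i_\ast F=0$ and $k$ algebraically closed), so the long exact local cohomology sequence of the pushed-forward covering sequence can only ever produce $0$ in degree $2$; it cannot produce $\bbZ\mod q$. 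The group $\HH^1_k(R,\RR^1f_\ast\mu_{q,R})=\bbZ\mod q$ of lemma \ref{coh_f_lemma}, \enum 4, is nonzero precisely because the sheaf $\RR^1f_\ast\mu_{q,R}$ on $\spec R$ has nontrivial generic fibre $K^\ast\mod (K^\ast)^q$, and that contribution is lost the moment you replace it by its incarnation as a proalgebraic $k$-group, which is what forming $\bar G\to G$ forces you to do. So the degree shift does not come from the universal covering sequence, and \enum 1 is not established; you would need to reproduce (or cite) the actual computation of $\HH^2_k(R,\scrF(\mu_{q,R}))$ from \cite{bes}, 2, lemma 6.2.
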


\begin{proof}
\enum 1 is shown in the proof of \cite{bes}, 2, lemma 6.2. Statement \enum 2 is
shown for a group of height $1$ in \cite{bes}, 2, lemma 5.13, with lemma 4.7.
For the general case, use the isomorphism
$\scrF(\mu_{q,R})=\pi_1(\RR^1\alpha_\ast\mu_{q,R})$, \cite{bes}, 1, lemma 3.7,
where $\alpha_\ast$ denotes the Greenberg functor, see the introduction to this
section. Using \cite{adt}, remark after theorem III, 10.4, the pro-algebraic
group scheme $\RR^1\alpha_\ast\mu_{q,R}$ can be identified with
$\HH^1(R,\mu_{q,R})$, regarded as a pro-algebraic group scheme. Due to the Leray
spectral sequence, this object can be identified with $\RR^1f_\ast\mu_{q,R}$.
\end{proof}

Finally, we need to evaluate the cup product induced by Cartier duality. Let $S$
be $R$ or $K$. It is easy to see that the Cartier pairing is given by
\index{Cartier duality}
\begin{equation}\label{cartier_map}
\begin{array}{ccl}
\mu_{n,S}\times\bbZ\mod n & \longto & \mu_{n,S}\sub\bbG_{m,S} \\
(\zeta,\bar i) & \longmapsto & \zeta^i.
\end{array}
\end{equation}

The Cartier pairing induces the following cup product:

\begin{lemma}\index{Cartier duality}
Let $N$ be a finite, flat group of order $n$ over $S$. Then, Cartier duality $N\times
N^D\to\mu_{n,S}$
induces  cup products
\begin{align*}
\RR^rf_\ast N\times\RR^sf_\ast N^D & \longto \RR^{r+s}f_\ast\mu_{n,S} \\
\HH^r(S,N)\times\HH^s(S,N^D) & \longto\HH^{r+s}(S,\mu_{n,S})
\end{align*}
for the morphism $f\colon S_\fl\to S_\et$.
\end{lemma}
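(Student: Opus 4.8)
The plan is to deduce the lemma from the $\cup$-functor formalism of Section~\ref{cup_derived}; the only genuine input is that the Cartier pairing is a biadditive pairing of sheaves whose image lands in $\mu_{n,S}$, everything else being formal.

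First I would record that Cartier duality $N\times N^D\to\bbG_{m,S}$, with $N^D=\shom(N,\bbG_{m,S})$, is nothing but the evaluation morphism $N\tensor N^D\to\bbG_{m,S}$ in the category of abelian sheaves on $S_\fl$ (the counit of the tensor--$\shom$ adjunction); explicitly it is the pairing written down in \eqref{cartier_map}. Since a finite flat commutative group scheme of order $n$ is annihilated by $n$ (Deligne), the factor $N^D$, hence the tensor product $N\tensor N^D$, is killed by $n$, so this morphism factors through $\bbG_{m,S}[n]=\mu_{n,S}$; when $\ord N$ is a power of $p$ this is the statement recalled in \eqref{cartier_p}. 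Thus Cartier duality furnishes a morphism of abelian sheaves $c_N\colon N\tensor N^D\to\mu_{n,S}$. The only half of this that needs a word is the passage from the ``bilinear'' pairing of group schemes to a morphism out of the sheaf tensor product, and this is exactly the adjunction $\hom(A\tensor B,C)=\bil(A\times B,C)$ for abelian sheaves, combined with the annihilation input above.

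Now $f_\ast$ is a $\cup$-functor by Proposition~\ref{cup_functor_sheaf}, so Theorem~\ref{cup_prod} (the sheaf-theoretic version of Theorem~\ref{der_cup_prod}) provides, for all abelian sheaves $A,B$ on $S_\fl$ and all $r,s\in\bbN$, a cup product $\RR^rf_\ast A\tensor\RR^sf_\ast B\to\RR^{r+s}f_\ast(A\tensor B)$. Specialising to $A=N$, $B=N^D$ and post-composing with $\RR^{r+s}f_\ast$ applied to $c_N$ yields
\[
\RR^rf_\ast N\tensor\RR^sf_\ast N^D\longto\RR^{r+s}f_\ast(N\tensor N^D)\longto\RR^{r+s}f_\ast\mu_{n,S},
\]
which is the first pairing. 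For the second I would argue identically with the $\cup$-functor $\HH^0(S,-)$, again one of the functors of Proposition~\ref{functor_sheaf} and hence a $\cup$-functor by Proposition~\ref{cup_functor_sheaf}: its cup product $\HH^r(S,N)\tensor\HH^s(S,N^D)\to\HH^{r+s}(S,N\tensor N^D)$, composed with $\HH^{r+s}(S,c_N)$, lands in $\HH^{r+s}(S,\mu_{n,S})$; alternatively one obtains it from the sheaf-level statement by applying $\RR\Gamma\!_S$ and invoking the compatibility of composition of $\cup$-functors with cup products (the corollary to Proposition~\ref{cup_comp0}).

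No real obstacle arises here: the substantive content — existence, functoriality, and compatibility of these cup products with connecting homomorphisms and with the $r=s=0$ pairing — is precisely Theorem~\ref{der_cup_prod}, so beyond identifying the Cartier pairing as the sheaf morphism $c_N$ there is nothing to compute. The one point to be a little careful about, should later compatibilities require it, is that the flat-cohomology cup product and the $\RR f_\ast$-cup product match up under the Leray spectral sequence for $f$; but the present lemma asserts only the existence of the two pairings, for which the argument above suffices.
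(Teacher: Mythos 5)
Your proposal is correct and follows essentially the same route as the paper: the paper's proof simply observes that Cartier duality induces a sheaf morphism $N\tensor N^D\to\mu_{n,S}$ and then composes with the cup product of theorem \ref{cup_prod} for the $\cup$-functors $f_\ast$ and $\HH^0(S,-)$. You merely supply additional detail (the tensor--$\shom$ adjunction and the annihilation of $N^D$ by $n$) that the paper leaves implicit.
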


\begin{proof}
The Cartier pairing $N\times N^D\to\bbG_{m,S}$ induces a morphism $N\tensor
N^D\to\mu_{n,S}$ . By composition with the cup product, theorem \ref{cup_prod},
this induces the desired cup product.
\end{proof}

The description of \eqref{cartier_map} extends to the cup product in the
following sense:

\begin{lemma}\label{cup_description}
Let $S$ be $R$ or $K$.
\begin{enumprop}
  \item $\HH^1(S,\mu_{n,S})=S^\ast\mod(S^\ast)^n$.
  \item The cup product for $r=1$, $s=0$ is given by
    \[
    \begin{array}{ccl}
    \HH^1(S,\mu_{n,S})\times\HH^0(S,\bbZ\mod n) & \longto & \HH^1(S,\mu_{n,S}) \\
    (\bar \zeta,\bar i) & \longmapsto & \overline{\zeta^i}
    \end{array}
  \]
  for an element $\bar\zeta\in\HH^1(S,\mu_n)=S^\ast\mod (S^\ast)^n$
  and $\bar i\in\bbZ\mod n$.
\end{enumprop}
\end{lemma}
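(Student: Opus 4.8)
The plan is to treat the two assertions separately: the first is the classical Kummer identification, the second a direct \Cech-cocycle computation via the Alexander-Whitney formula, using corollary \ref{cech1_cup} and theorem \ref{cech_true_cup}.

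For the first assertion I would take the Kummer sequence
\[
0\longto\mu_{n,S}\longto\bbG_{m,S}\Longto{n}\bbG_{m,S}\longto 0
\]
on the flat site of $\spec S$, which is exact there for every $n\in\bbN$ irrespective of $\chr k$. Its long exact cohomology sequence contains $S^\ast=\HH^0(S,\bbG_{m,S})\To{n}S^\ast\longto\HH^1(S,\mu_{n,S})\longto\HH^1(S,\bbG_{m,S})$, and $\HH^1(S,\bbG_{m,S})=0$ — by Hilbert~90 if $S=K$, and by lemma \ref{local_coh_lemma}, \enum 1, if $S=R$ (the smooth group $\bbG_{m,R}$ has vanishing higher cohomology). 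Hence the connecting homomorphism induces the canonical isomorphism $S^\ast\mod(S^\ast)^n\isoto\HH^1(S,\mu_{n,S})$, which is the first assertion.

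For the second assertion I would invoke corollary \ref{cech1_cup}: the cup product
\[
\HH^1(S,\mu_{n,S})\times\HH^0(S,\bbZ\mod n)\longto\HH^1(S,\mu_{n,S}\tensor\bbZ\mod n)\longto\HH^1(S,\mu_{n,S}),
\]
the last arrow being induced by the Cartier morphism of \eqref{cartier_map}, may be computed in \Cech\ cohomology, where by theorem \ref{cech_true_cup} it is given by the Alexander-Whitney formula \eqref{alex_whit}. So I would fix $\zeta\in S^\ast$ lifting $\bar\zeta$ and use the covering $\frakU=\{U\to\spec S\}$ with $U=\spec S[T]\mod(T^n-\zeta)$; this $U$ is finite locally free of rank $n$, hence a flat covering, and over $U$ the image $\tau$ of $T$ satisfies $\tau^n=\zeta\vert_U$. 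Writing $p_1,p_2\colon U\times_S U\to U$ for the two projections, the element $c\defeq p_1^\ast\tau\cdot(p_2^\ast\tau)^{-1}$ of $\bbG_{m,S}(U\times_S U)$ satisfies $c^n=p_1^\ast(\zeta\vert_U)\cdot(p_2^\ast(\zeta\vert_U))^{-1}=1$ since $\zeta$ is defined over $S$; thus $c$ is a \Cech\ $1$-cocycle with values in $\mu_{n,S}$ representing $\delta(\zeta)$, hence $\bar\zeta$ under the isomorphism of the first part. Now $\bar i\in\HH^0(S,\bbZ\mod n)$ is the constant $0$-cocycle $i$, and applying \eqref{alex_whit} with $r=1$, $s=0$ on the one-element covering $\frakU$ followed by the pairing morphism $\mu_{n,S}\tensor\bbZ\mod n\to\mu_{n,S}$ of \eqref{cartier_map} produces the $1$-cocycle $c\tensor i\mapsto c^i$. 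But $c^i=p_1^\ast(\tau^i)\cdot(p_2^\ast(\tau^i))^{-1}$ and $(\tau^i)^n=\zeta^i\vert_U$, so $c^i$ is exactly the \Cech\ cocycle representing $\delta(\zeta^i)=\overline{\zeta^i}$; hence $\bar\zeta\cup\bar i=\overline{\zeta^i}$. (One could also reduce to $i=1$ by bi-additivity of the cup product, but the \Cech\ computation treats all $i$ at once.)

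The step I expect to need the most care is matching normalisations: verifying that the explicit $1$-cocycle $c$ on $\frakU$ really represents the Kummer class $\bar\zeta$ for the connecting morphism $\delta$ attached to the Kummer sequence (signs and conventions for \eqref{alex_whit} and for $\delta$), and that corollary \ref{cech1_cup} genuinely applies with this particular covering — i.e.\ that $\bar\zeta$ lifts to $\cHH^1(\frakU,\mu_{n,S})$, that the cup product is compatible with refinement of coverings, and that the passage $\cHH^1(\spec S,-)\to\HH^1(S,-)$ is the expected edge map. All of this is routine, but it is precisely the ``explicit example of a \Cech\ computation'' promised after corollary \ref{cech1_cup}, so I would spell it out in full here.
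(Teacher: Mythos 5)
Your proposal is correct and follows essentially the same route as the paper: the Kummer sequence together with Hilbert~90 and lemma \ref{local_coh_lemma} for the identification $\HH^1(S,\mu_{n,S})=S^\ast\mod(S^\ast)^n$, and corollary \ref{cech1_cup} with the Alexander--Whitney formula \eqref{alex_whit} for the cup product. The only difference is cosmetic: where the paper raises a generic \Cech\ cocycle $(\zeta_{j_0,j_1})$ to the $i$-th power and reads off the result through the Kummer isomorphism, you make the same computation explicit on the Kummer covering $\spec S[T]\mod(T^n-\zeta)$, which is a legitimate (indeed slightly more self-contained) way of carrying out the identical calculation.
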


\begin{proof}
\enum 1 follows from the Kummer sequence and the triviality of $\HH^1(K,\bbG_m)$
and $\HH^1(R,\bbG_m)$ due to Hilbert 90 and lemma \ref{local_coh_lemma}.
\par
Due to corollary \ref{cech1_cup}, we can describe the cup product in terms of
\Cech\ cohomology: An element of $\cHH^1(S,F)$ can be represented by some
element of $\cHH^1(\frakU,F)$ for some covering \mbox{$\frakU=(U_i\to S)_{i\in
I}$.} Without loss of generality, we can assume that every $U_i$ is connected
such that $(\bbZ\mod n)(U_i)=\bbZ\mod n$. The cup product of \enum 2 on \Cech\
cohomology\index{Cech cohomology} is induced by the Alexander-Whitney formula on
\Cech\ cochains, cf.\
\eqref{alex_whit}\index{Alexander-Whitney formula}
\[
\begin{array}{ccc}\displaystyle
C^1(\frakU,\mu_{n,S}) \times  C^0(\frakU,\bbZ\mod n) & \longto & C^1(\frakU,\mu_{n,S}\tensor\bbZ\mod
n) \\
((\zeta_{j_0,j_1})_{j_0,j_1}, (i_{j_0})_{j_0})         & \longmapsto & \zeta\cup i,
\end{array}
\]
with $(\zeta\cup i)_{j_0,j_1}=\zeta_{j_0,j_1}\tensor i_{j_1}$. The canonical map
$\cHH^1(\frakU,\mu_{n,S}\tensor \bbZ\mod n)\to\cHH^1(\frakU,\mu_{n,S})$ is given
by $\zeta\cup i\mapsto \zeta^i$ with
$(\zeta^i)_{j_0,j_1}=\zeta_{j_0,j_1}^{i_{j_1}}$. The composition of both maps
yields the description of \enum 2.
\end{proof}

Using these results, we can prove the commutativity of \eqref{bester_eval_diag}
in the setting of derived categories:

\begin{proofof}{proposition \ref{bester_eval}}
In a first step, let us elaborate on the construction of Bester's pairing as it
is sketched in \cite{bes}, 2, lemma 6.3. Doing so, we see in which way Bester's
pairing is built upon the cup product for $f\colon R_\fl\to R_\et$. In a second
step, we use this description to evaluate it for $\mu_{q,R}$ and its Cartier
dual $\bbZ\mod q$.
\par
Let $f\colon R_\fl\to R_\et$ be the canonical morphism of sites. In the derived
category, the cup product for $f_\ast$ and $\mu_{q,R}$, $\mu_{q,R}^D=\bbZ\mod q$
is a morphism
\[ \index{cup product}
\RR f_\ast\mu_{q,R}\longto\RR\!\shom(\RR f_\ast\bbZ\mod q,\RR f_\ast\mu_{q,R}).
\]
\par
Bester shows that there is a morphism
\begin{equation}\label{F_morph}
d\colon \rshom(\RR f_\ast\bbZ\mod q,\RR f_\ast\mu_{q,R})\longto \RR\!\shom(\RR\scrF\circ\RR
f_\ast\bbZ\mod q,\RR\scrF\circ\RR f_\ast\mu_{q,R}),
\end{equation}
where $\RR\scrF$ is defined\footnote{Despite of the notation, $\RR\scrF$ is not
the derived functor of $\scrF$. In fact, $\scrF$ defines an exact functor.
Bester chooses this notation because $\pi_1(\RR^1f_\ast N)=\scrF(N)$. Cf.\ lemma
\ref{pi_1_f_lemma}.} to be
$\RR\pi_1[1]\circ\RR\alpha_\ast=\LL\pi_0\circ\RR\alpha_\ast$ for the Greenberg
functor $\alpha_\ast$, cf.\ introduction on page \pageref{weil_green}. Again, we
will omit the functor $\RR a_\ast$ in the notation. By composition with the
derived cup product morphism, this gives rise to a morphism 
\[
\RR f_\ast\mu_{q,R}\longto\RR\!\shom(\RR\scrF\circ\RR f_\ast\bbZ\mod q,\RR\scrF\circ\RR
f_\ast\mu_{q,R}).
\]
\par
As $\RR f_\ast\bbZ\mod q=\bbZ\mod q$ and $\pi_1(\bbZ\mod q)=0$ and $\pi_0(\bbZ\mod q)=\bbZ\mod
q=\scrF(\bbZ\mod q)$, we can conclude that $\RR\scrF\circ\RR f_\ast\bbZ\mod p$ equals $\bbZ\mod q$.
To evaluate $\RR\scrF\circ\RR f_\ast\mu_{q,R}$ we need more preparations: We know the following:
\begin{enumprop}
  \item $\RR f_\ast\mu_{q,R}=\RR^1f_\ast\mu_{q,R}[-1]$, cf.\ Lemma \ref{coh_f_der_lemma},
  \item $\pi_1(\RR^1f_\ast N)=\scrF(N)$,  by lemma \ref{pi_1_f_lemma}, and
  \item $\pi_0(\RR^1f_\ast\mu_{q,R})=0$. This group is trivial since the sheaf
     $\RR^1f_\ast\mu_{q,R}$ fits into an exact sequence
    \[
    0\longto f_\ast\bbG_{m,R}\Longto q f_\ast\bbG_{m,R}\longto\RR^1f_\ast\mu_{q,R}\longto 0,
    \]
   cf.\ \eqref{res_mu_q}. As the (pro-)scheme $f_\ast\bbG_{m,R}$ is connected, so is
   the pro-scheme $\RR^1f_\ast\mu_{q,R}$.
\end{enumprop}
Using arguments similar to those in the proof of lemma \ref{coh_f_der_lemma}, we
can conclude that $\RR\scrF\circ\RR f_\ast\mu_{q,R}=\scrF(\mu_{q,R})$.
\par
Applying $\RR\Gamma\!_{k,R}$, we get
\begin{equation*}\label{bester_derived_0}
\RR\Gamma\!_{k,R}\circ\RR f_\ast\mu_{q,R}\longto\RR\Gamma\!_{k,R}\circ\RR\!\shom(\scrF(\bbZ\mod
q),\scrF(\mu_{q,R})).
\end{equation*}
\par
Again, with lemma \ref{coh_f_der_lemma}, this morphism can be written as
\begin{equation}\label{bester_derived_1}
\HH^2_k(R,\mu_{q,R})[-2] \longto \RR\Gamma\!_{k,R}\circ\RR\!\shom(\scrF(\bbZ\mod
q),\scrF(\mu_{q,R})).
\end{equation}
\par
Next, we want to show that in this very situation the functor
$\RR\Gamma\!_{k,R}$ commutes with $\RR\!\shom$. The exact sequence
$0\to\bbZ\to\bbZ\to\bbZ\mod q\to 0$ induces a triangle
\begin{eqnarray*}
\RR\Gamma\!_{k,R}\circ\rshom(\bbZ,\scrF(\mu_{q,R}))\longto %
\RR\Gamma\!_{k,R}\circ\rshom(\bbZ,\scrF(\mu_{q,R}))\longto \\%
\RR\Gamma\!_{k,R}\circ\rshom(\bbZ\mod q,\scrF(\mu_{q,R}))\longto %
\RR\Gamma\!_{k,R}\circ\rshom(\bbZ,\scrF(\mu_{q,R}))[1].
\end{eqnarray*}
Since the functors $\shom(\bbZ,-)$ (for sheaves) and $\hom(\bbZ,-)$ (for
abstract groups) are isomorphic to the identity, we have
\[
\RR\Gamma\!_{k,R}\circ\RR\!\shom(\bbZ,-)=\rhom(\bbZ,\RR\Gamma\!_{k,R}-).
\]
Therefore, this triangle is isomorphic to the triangle
\begin{eqnarray*}
\rhom(\bbZ,\RR\Gamma\!_{k,R}\scrF(\mu_{q,R}))\longto  %
\rhom(\bbZ,\RR\Gamma\!_{k,R}\scrF(\mu_{q,R}))\longto \\ %
\RR\Gamma\!_{k,R}\circ\rshom(\bbZ\mod q,\scrF(\mu_{q,R}))\longto
\rhom(\bbZ,\RR\Gamma\!_{k,R}\scrF(\mu_{q,R}))[1] %
\end{eqnarray*}
Due to the ``two of three'' properties of triangulated categories, the first
term is isomorphic to $\rhom(\bbZ\mod q,\RR\Gamma\!_{k,R}\scrF(\mu_{q,R}))$.
Since $\scrF(\bbZ\mod q)=\bbZ\mod q$, we can write the cup product morphism
\eqref{bester_derived_1} as
\begin{equation}\label{bester_derived_2}
\beta\colon\HH^2_k(R,\mu_{q,R})[-2]\longto\rhom(\scrF(\bbZ\mod
q),\RR\Gamma\!_{k,R}\scrF(\mu_{q,R})).
\end{equation}
Let us take hypercohomology\index{hypercohomology} of the second term. The
corresponding spectral sequence is
\[
\ext^r(\scrF(\bbZ\mod q),\HH^s_k(R,\scrF(\mu_{q,R})))\ssto\HH^{r+s}(\rhom(\scrF(\bbZ\mod
q),\RR\Gamma\!_{k,R}\scrF(\mu_{q,R}))),
\]
and it induces the edge morphism $E^2\to E_2^{0,2}$:
\[
\eta\colon\HH^2(\rhom(\scrF(\bbZ\mod q),\RR\Gamma\!_{k,R}\scrF(\mu_{q,R})))
\longto\hom(\scrF(\bbZ\mod q),\HH^2_k(R,\scrF(\mu_{q,R}))).
\]
Since $\HH^2_k(R,\scrF(\mu_{q,R}))=\bbZ\mod q$ (lemma \ref{coh_F_lemma}), we can
compose these morphisms to obtain Bester's paring
\[
\bp=\eta\circ\HH^2(\beta)\colon \HH^2_k(R,\mu_{q,R})\longto\hom(\scrF(\bbZ\mod q),\bbZ\mod
q)=\scrF(\bbZ\mod q)^\ast.
\]
\par
Next, we want to show that Bester's pairing is compatible with ordinary cup
products. Therefore, consider the triangle of lemma \ref{seq_loc_coh_der}.
Together with the cup product morphism, it induces the following morphism of
distinguished triangles. The lower right morphism is the morphism $d$ of
\eqref{F_morph}:
\begin{diagram}[width=1.0in,tight]
\RR\Gamma\!_{k,R}\circ\RR f_\ast\mu_{q,R}    & \rTo & \RR\Gamma\!_{k,R}\circ\RR\shom(\bbZ\mod q,\RR
f_\ast\mu_{q,R}) \\
\dTo                                   &      & \dTo \\
\RR\Gamma\!_{R}\circ\RR f_\ast\mu_{q,R}      & \rTo & \RR\Gamma\!_R\circ\RR\shom(\bbZ\mod q,\RR
f_\ast\mu_{q,R}) \\
\dTo                                   &      & \dTo \\
\RR\Gamma\!_{K}\circ\RR f_\ast\mu_{q,R}      & \rTo & \RR\Gamma\!_K\circ\RR\shom(\bbZ\mod q,\RR
f_\ast\mu_{q,R}) \\
\dTo                                   &      & \dTo \\
\RR\Gamma\!_{k,R}\circ\RR f_\ast\mu_{q,R}[1] & \rTo & \RR\Gamma\!_{k,R}\circ\RR\shom(\bbZ\mod q,\RR
f_\ast\mu_{q,R})[1] \\
                                       &      & \dTo>d \\
                                       &      & \RR\Gamma\!_{k,R}\circ\RR\shom(\scrF(\bbZ\mod
q),\RR\scrF\circ\RR f_\ast\mu_{q,R})[1].
\end{diagram}
By the the same ``two of three'' argument as above, the functors $\Gamma\!_R$,
$\Gamma\!_K$ and $\Gamma\!_{k,R}$ commute with $\shom$. Using lemma
\ref{coh_f_der_lemma}, we can write this diagram as
\begin{diagram}[width=1.0in,tight]
\HH^2_k(R,\mu_{q,R})[-2] & \rTo & \rhom(\bbZ\mod q,\HH^2_k(R,\mu_{q,R})[-2]) \\
\dTo                 &      & \dTo \\
\HH^1(R,\mu_{q,R})[-1]   & \rTo & \rhom(\bbZ\mod q,\HH^1(R,\mu_{q,R})[-1]) \\
\dTo                 &      & \dTo \\
\HH^1(K,\mu_{q,R})[-1]   & \rTo & \rhom(\bbZ\mod q,\HH^1(K,\mu_{q,R})[-1]) \\
\dTo                 &      & \dTo \\
\HH^2_k(R,\mu_{q,R})[-1] & \rTo & \rhom(\bbZ\mod q,\HH^2_k(R,\mu_{q,R})[-2])[1] \\
                                    &      & \dTo>d \\
                                    &      & \rhom(\scrF(\bbZ\mod
q),\RR\Gamma\!_{k,R}(\scrF(\mu_{q,R}))[1].
\end{diagram}
Due to corollary \ref{cup_comp} the functors $\RR\Gamma$ and $\RR\Gamma\!_k$
transform the cup product of $f_\ast$ into the cup product for $\HH^0$ or
$\HH^0_k$ on the small \'etale site. Now, we take hypercohomology. Using lemma
\ref{coh_F_lemma} and the edge morphisms $E^1\to E_2^{0,1}$, this induces the following diagram:
\begin{equation*}
\begin{diagram}[width=0.2in,l>=0.5in]
0 \\
\dTo \\
R^\ast\mod (R^\ast)^q & \:\:=\:\: & \HH^1(R,\mu_{q,R})      & \;\;\times\; & \bbZ\mod q  & \rTo^\cup
& \HH^1(R,\mu_{q,R}) \\
\dTo                  &   & \dTo                &            & \dEqual     &           & \dTo \\
K^\ast\mod (K^\ast)^q & = & \HH^1(K,\mu_{q,R})      & \;\;\times     & \bbZ\mod q  & \rTo^\cup &
\HH^1(K,\mu_{q,R}) \\
\dTo>\nu              &   & \dTo                &            & \dEqual     &           & \dTo \\
\bbZ\mod q            & = & \HH^2_k(R,\mu_{q,R})    & \;\;\times     & \bbZ\mod q  & \rTo^\cup &
\HH^2_{k,\fl}(R,\mu_{q,R})  \\
\dTo                  &   & \dEqual             &            & \dEqual     &          & \dTo^d    \\
0                     &   & \HH^2_k(R,\mu_{q,R})    & \times     & \scrF(\bbZ\mod q)  & \rTo^\bp &
\HH^2_{k,\fl}(R,\scrF(\mu_{q,R})) &  =\bbZ\mod q. %
\end{diagram}
\end{equation*}
This diagram shows that Bester's pairing is compatible with the cup product. To
analyse the cup products, we can use the cup product defined on \Cech\
cocycles.\index{Cech cohomology} Lemma \ref{cup_description} shows that it is
given by $(\bar x,\bar i)\mapsto \overline{x^i}$, for $\bar x\in K^\ast\mod
(K^\ast)^{q}$ and $\bar i\in\bbZ\mod q$.
\par
Consequently, Bester's pairing can be described as follows: Lift an element
$\zeta\in\HH^2_k(R,\mu_{q,R})$ to an element $\bar x\in K^\ast\mod
(K^\ast)^{q}$. Then, $\langle \zeta, i\rangle_\bp=\overline{\nu(x^i)}$ in
$\bbZ\mod q$. Consequently, the diagram of pairings
\begin{diagram}[LaTeXeqno]\label{bester_eval_pairing}
\HH^1_k(R,\bbG_{m,R}) & \;\;\;\times & M' & \rTo & \bbZ \\
\dTo              &              & \dTo &    & \dTo \\
\HH^2_k(R,\mu_{q,R})  & \;\;\;\times & \bbZ\mod q & \rTo & \bbZ\mod q & \sub\bbQZ
\end{diagram}
commutes. The upper pairing is given by $(\bar x,n)\mapsto\nu(x^n)$, for $\bar
x\in K^\ast\mod R^\ast$, $n\in M'=\bbZ$. The lower pairing is given by $(\bar
x,\bar n)\mapsto \overline{\nu(x^n)}$ for $\bar x\in K^\ast\mod (K^\ast)^q$,
$\bar n\in\bbZ\mod q$. Thus, diagram \eqref{bester_eval_diag} commutes.
\end{proofof}

\begin{remark}\label{remark_cup_bester}
If we replace the integer $q$ in diagram \eqref{bester_eval_pairing} by an
integer $n$ which is prime to $p$ then we can infer that the cup product
$\HH^2_k(R,\mu_n)=\HH^1(K,\mu_n)\times\HH^0(K,\bbZ\mod n)\to\bbZ\mod n$ is
compatible with the map evaluation of characters, $\nu^\ast$. See proposition
\ref{cup_description}. We will use this observation to combine Bester's pairing
for the $p$-part of a groups and the cup product for the prime-to-$p$-part of a
group to a new pairing that is compatible with the map $\nu^\ast$.
\end{remark}

In \cite{bes}, Bester gives a description of Bester's pairing for groups of
height $1$. We can find the result of proposition \ref{bester_eval} in this
description as follows: Crucial for Bester's description is the exact sequence
\[
0\longto\RR^1f_\ast N\longto\Lie N\tensor\Omega^1_{R/k}\longto \Lie N\tensor\Omega^1_{R/k}\longto 0
\]
(cf.\ \cite{bes}, 2, lemma 4.3). In the case of $N=\mu_{p,R}$, it reads
\[
0\longto f_\ast\bbG_m\mod (f_\ast\bbG_m)^p \mu_{p,R}\Longto\dlog\Omega^1_{R/k}\longto
\Omega^1_{R/k}\longto 0
\]
(cf.\ \cite{adt}, III, example 5.9). The morphism $\dlog\colon x\mapsto
\frac{dx}{x}$ of this sequence is part of the diagram (cf.\ \cite{bes}, bottom
of p. 164)
\begin{diagram}[l>=0.4in,midshaft]
0 & \rTo & \shom(\scrF(\bbZ\mod p),\bbZ\mod p) & \rTo       & \shom(\scrO_k,\scrO_k) \\
  &      & \uTo>\bp                            &            & \uTo>{i_1=\res^\ast} \\
0 & \rTo & \HH^2_k(R,\mu_{p,R})                    & \rTo^\dlog & \HH^1_k(R,\Omega^1_{R/k})
\end{diagram}
where $i_1$ is induced by the residue map\index{residue map} (\cite{bes},
section 2.5). Since $\res\circ\dlog$ equals the valuation on $R$ (\cite{se59},
II, no. 12, proposition 5$'$), we can conclude that Bester's pairing for
$\mu_{p,R}$ and $\bbZ\mod p$ is given by the evaluation map.

\subsection{Bester's Pairing vs.\ Grothendieck's Pairing}

From now on, let $A_K$ be an abelian variety with semistable
reduction\index{semistable reduction} and N\'eron model $A$. Let $M_K$, $E_K$
etc.\ denote the data of rigid uniformisation as in proposition
\ref{rigid_uniform} and \ref{uniform_neron}. 
\par
In the last section, we gave an overview of Bester's pairing for finite, flat
$R$-group schemes. Unfortunately, the kernel of $n$-multiplication $A_n$ of the
N\'eron model of an abelian variety with semistable reduction is known to be
only a quasi-finite, flat group scheme. In \cite{be03}, lemma 14, it is shown
that it is sufficient to consider only the \emph{finite part}\index{finite part
of a group scheme}$A_n^f$ of $A_n$ to obtain a suitable duality result for $A_n$
and $A_n'$. Recall that every quasi-finite $R$-group scheme $N$ is the disjoint
union of open and closed subschemes $N^f$ and $N'$ where $N^f$ is finite and
$N'$ has an empty special fibre (\cite{blr}, 2.3, proposition 4).
\par
Since we are interested in a comparison between Bester's pairing and
Grothendieck's pairing, we assume $n\in\bbN$ to be big enough to kill
$\oldphi_A$ and $\oldphi_{A'}$. To simplify notation, we will just write
$\oldphi$ instead of $i_\ast\oldphi$.
\par
In this situation, we have several important exact sequences:

\begin{prop}\ \label{comp_groups_n}
\begin{enumprop}
  \item There are exact sequences
  \[
  \begin{array}{cl}
  \HH^2_k(R,T_n)\Longto\phi\HH^2_k(R,A_n)\longto\HH^2_k(R,(E'_n)^D) \longto 0 & \quad\text{and}\smallskip \\
  0\longto \scrF(E'_n)\longto \scrF(A'_n)\Longto\psi \scrF(M'\mod nM'). 
  \end{array}
  \]
  where $\scrF(A_n)$ is defined to be $\scrF(A_n^f)$.
 \item The image of $\phi$ is $\oldphi_A$, and
 \item the image of $\psi$ is $\oldphi_{A'}$.
\end{enumprop}
\end{prop}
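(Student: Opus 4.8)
The plan is to obtain both exact sequences by applying the kernel-of-$n$-multiplication functor and Cartier duality to the exact sequences of rigid uniformisation (Propositions~\ref{rigid_uniform} and~\ref{uniform_neron}), and then to read off the images of $\phi$ and $\psi$ from the component-group sequences~\eqref{comp_groups} together with the monodromy pairing of Proposition~\ref{mp_eval_cha}. Throughout one uses that a quasi-finite flat $R$-group scheme with empty special fibre has vanishing local cohomology $\HH^\bullet_k(R,-)$, so that $\HH^i_k(R,A_n)=\HH^i_k(R,A_n^f)$ and likewise for $E'_n$; thus everything may be phrased in terms of finite parts, in accordance with the conventions $\scrF(A_n)=\scrF(A_n^f)$, etc.

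For the second sequence I would start from $0\to M'\to E'\to A'\to 0$ (Proposition~\ref{uniform_neron}). As $M'\cong\bbZ^d$ is constant and torsion free, $M'_n=0$ and $M'/nM'\cong(\bbZ/n)^d$ is again constant, so the snake lemma for multiplication by $n$ gives an exact sequence $0\to E'_n\to A'_n\to M'/nM'$ of quasi-finite flat groups. Passing to finite parts yields $0\to (E'_n)^f\to (A'_n)^f\to M'/nM'$ (note $M'/nM'$ is already finite), and applying the exact functor $\scrF$, together with $\scrF(\widetilde{M'/nM'})=M'/nM'$, produces exactly the second sequence, with $\psi$ induced by $A'_n\to M'/nM'$.

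For the first sequence I would Cartier-dualise the generic-fibre version $0\to E'_K[n]\to A'_K[n]\to M'_K/nM'_K\to 0$ of the previous step; since $M_K$ is the character group of $T'_K$ (Proposition~\ref{rigid_uniform}(ii)) and $A_K$ is dual to $A'_K$, the Weil pairing rewrites the dual as $0\to T_K[n]\to A_K[n]\to (E'_K[n])^D\to 0$. Extending to $R$ on finite parts gives an exact sequence $0\to T_n\to A_n^f\to ((E'_n)^f)^D\to 0$ of finite flat $R$-group schemes, to which I apply $\HH^\bullet_k(R,-)$. Since $T_n=\mu_n^d$, Lemmas~\ref{local_coh_lemma} and~\ref{coh_f_lemma}, together with the fact that $\spec R$ has cohomological dimension $\le 1$, give $\HH^3_k(R,T_n)=0$; the long exact sequence therefore terminates as $\HH^2_k(R,T_n)\Longto\phi\HH^2_k(R,A_n)\to\HH^2_k(R,(E'_n)^D)\to 0$, which is the first sequence, and exactness at the middle term already shows $\im\phi=\ker\big(\HH^2_k(R,A_n)\to\HH^2_k(R,(E'_n)^D)\big)$.

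Finally, for (ii) and (iii) I would identify the images with groups of components. On the $\scrF$-side, chasing the snake diagram of $0\to M'\to E'\to A'\to 0$ against the component sequence $0\to M'\to\oldphi_{E'}\to\oldphi_{A'}\to 0$ and using that $n$ kills $\oldphi_{A'}$ shows that $\psi$ has image the copy of $\oldphi_{A'}\cong M^\vee/M'$ sitting inside $M'/nM'$ via $x\mapsto nx$; alternatively, one deduces this by Pontryagin-dualising the first sequence through the perfect Bester pairing (Theorem~\ref{bester_pairing}) and comparing with the perfect monodromy pairing $\oldphi_A\times\oldphi_{A'}\to\bbQZ$ of Proposition~\ref{mp_eval_cha}. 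Statement (ii) is the Cartier-dual assertion: $\oldphi_A=\HH^1_k(R,A^0)$ (Lemma~\ref{local_coh_lemma}(v)) maps into $\HH^2_k(R,A_n)$ via the boundary map of the Kummer sequence of $A^0$ (using that $A^0$ has no unipotent part by semistability, so $n\colon A^0\to A^0$ is fppf-surjective and $n\oldphi_A=0$), and a diagram chase identifies this subgroup with $\im\phi$. The principal obstacle is the passage to finite parts over $R$: the finite-part functor is not exact, and the exactness of $0\to (E'_n)^f\to (A'_n)^f\to M'/nM'$ and of $0\to T_n\to A_n^f\to ((E'_n)^f)^D\to 0$ is precisely the delicate point analysed in \cite{be03}, lemma~14, which I would invoke rather than reprove; a secondary difficulty is keeping the various boundary maps compatible with the component-group sequences~\eqref{comp_groups} when pinning down the images of $\phi$ and $\psi$.
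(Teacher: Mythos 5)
The paper offers no argument of its own here: its proof is the bare citation ``\cite{be03}, lemmas 11, 12 and 13'', and your sketch reconstructs precisely the content of those lemmas along the expected route (snake lemma applied to multiplication by $n$ on the uniformisation sequences, Cartier duality over $K$, passage to finite parts over $R$, the long exact sequence of local cohomology terminated by $\HH^3_k(R,T_n)=0$, and identification of the images of $\phi$ and $\psi$ via the component-group sequences \eqref{comp_groups}). You also correctly isolate the one genuinely delicate step --- the exactness of the finite-part sequences over $R$ --- and defer it to \cite{be03}, which is consistent with what the paper itself does wholesale.
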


\begin{proof}
\cite{be03}, lemmas 11, 12 and 13.
\end{proof}

As $n$ does not have to be a power of $p$, the groups $A_n$, $E_n$ etc.\ can
have non-trivial prime-to-$p$-parts. As indicated before, we use the perfect cup
product to extend Bester's pairing from the $p$-part of the groups to the whole
group and call this combined pairing Bester's pairing again. In this spirit, you
have to read the following:
\par
Since $E_n$ is a finite, flat $R$-group scheme, we can apply Bester duality as
in \cite{bes} and we get the following diagram
\begin{equation}\label{bes_def}
\begin{diagram}
\HH^2_k(R,T_n)         & \rTo & \HH^2_k(R,A_n)   & \rTo & \HH^2_k(R,(E'_n)^D) & \rTo & 0\phantom.\\
\dTo>{\!\wr}           &      & \dTo>\Gamma    &      & \dTo>{\!\wr} \\
\scrF(M'\mod nM')^\ast & \rTo & \scrF(A'_n)^\ast & \rTo & \scrF(E'_n)^\ast    & \rTo & 0
\end{diagram}
\end{equation}
for the morphism $\Gamma$ as defined in \cite{be03}, Lemma 14. Essentially, it
is induced by the closed immersion $A_n'^f\into A'_n$ and Bester's pairing for
the finite, flat $R$-group $(A_n'^f)^D$, cf.\ \cite{be03}, proof of  lemma 14.
\par
Since the images of the first horizontal arrows are isomorphic to $\oldphi_A$
and $\oldphi_{A'}^\ast$, the first square of this diagram induces the following
diagram:
\begin{equation}\label{bp_tori_diag}
\begin{diagram}
\HH^2_k(R,T_n)         & \rOnto & \oldphi_A         & \rInto & \HH^2_k(R,A_n)\\
\dTo>{\bp}             &        & \dDashto>{\bp}        &    & \dTo>{\Gamma} \\
\scrF(M'\mod nM')^\ast & \rOnto & \oldphi_{A'}^\ast & \rInto & \scrF(A'_n)^\ast.
\end{diagram}
\end{equation}

We call the morphism between component groups to be induced by Bester's pairing
(as it is the kernel of the right square of \eqref{bes_def}).

\begin{prop}\label{Gamma_bij}
The morphisms $\Gamma$ and $\bp\colon\oldphi_A\to\oldphi_{A'}^\ast$ are
bijective.
\end{prop}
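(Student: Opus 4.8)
The plan is to run a diagram chase on \eqref{bes_def} — whose rows are exact and whose two outer vertical maps are isomorphisms — and then to settle the one direction the chase cannot reach by a cardinality count.

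First I would record the two structural facts the chase relies on. The top row of \eqref{bes_def} is the first exact sequence of Proposition \ref{comp_groups_n}. The bottom row is obtained by applying $\hom(-,\bbQZ)$ — an exact functor, since $\bbQZ$ is an injective abelian group — to the second sequence of Proposition \ref{comp_groups_n}; here one must use that $\oldphi_{A'}=\im\psi$, so that $0\to\scrF(E'_n)\to\scrF(A'_n)\to\oldphi_{A'}\to 0$ is short exact with $\oldphi_{A'}\into\scrF(M'\mod nM')$, which is precisely what makes the dualized sequence $\scrF(M'\mod nM')^\ast\to\scrF(A'_n)^\ast\to\scrF(E'_n)^\ast\to 0$ exact at the middle term with image $\oldphi_{A'}^\ast$. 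The outer verticals are isomorphisms because they are Bester's pairing (Theorem \ref{bester_pairing}), combined on the prime-to-$p$ part with the cup product, for the finite flat $R$-groups $T_n$ and $(E'_n)^D$.

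Next I would prove $\Gamma$ surjective by the ``right half of the five lemma''. Given $y\in\scrF(A'_n)^\ast$, push it into $\scrF(E'_n)^\ast$, lift back along the right isomorphism to $\HH^2_k(R,(E'_n)^D)$, lift further along the surjection $\HH^2_k(R,A_n)\to\HH^2_k(R,(E'_n)^D)$ to an element $x$, and then correct $x$ by the image of a suitable $v\in\HH^2_k(R,T_n)$ — obtained from the left isomorphism and exactness of the bottom row at $\scrF(A'_n)^\ast$ — so that $\Gamma(x-\phi(v))=y$. Running this same chase on elements $y\in\oldphi_{A'}^\ast\sub\scrF(A'_n)^\ast$, which map to $0$ in $\scrF(E'_n)^\ast$ and therefore yield an $x$ in $\ker(\HH^2_k(R,A_n)\to\HH^2_k(R,(E'_n)^D))=\im\phi=\oldphi_A$, shows that the induced map $\bp\colon\oldphi_A\to\oldphi_{A'}^\ast$ of \eqref{bp_tori_diag} is surjective too.

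For injectivity I would first observe that $\ker\Gamma\sub\oldphi_A$: if $\Gamma(x)=0$ then, by commutativity of the right square and bijectivity of the right vertical map, $x$ dies in $\HH^2_k(R,(E'_n)^D)$, hence lies in $\im\phi=\oldphi_A$ by Proposition \ref{comp_groups_n}; since moreover $\Gamma$ restricted to $\oldphi_A$ lands in $\oldphi_{A'}^\ast$ and equals the map $\bp$ of \eqref{bp_tori_diag}, we get $\ker\Gamma=\ker\bp$. Then I would invoke that the monodromy pairing is perfect (Proposition \ref{mp_eval_cha}), so that $\oldphi_A$ and $\oldphi_{A'}$ are finite of the same order; a surjective homomorphism of finite groups of equal order is bijective, so $\bp$ is bijective, $\ker\Gamma=\ker\bp=0$, and hence $\Gamma$ — and with it $\bp$ — is an isomorphism. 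I expect the genuine content to sit entirely in the two structural facts above — correctly identifying $\oldphi_{A'}$ as $\im\psi$ when dualizing the bottom row, and the perfectness of Bester's pairing for the two auxiliary finite flat groups — while the chase and the counting step are routine; a self-contained alternative to Proposition \ref{mp_eval_cha} would be to run the whole argument symmetrically in $A$ and $A'$ to obtain surjections $\oldphi_A\to\oldphi_{A'}^\ast$ and $\oldphi_{A'}\to\oldphi_A^\ast$, which forces equality of orders, but spelling out that symmetry is longer than citing the monodromy pairing.
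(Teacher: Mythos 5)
Your proof is correct and follows essentially the same route as the paper: the left vertical isomorphism of \eqref{bes_def} induces a surjection $\oldphi_A\to\oldphi_{A'}^\ast$ between the images of the left horizontal arrows, equality of cardinalities forces this to be bijective, and a diagram chase (the paper cites the snake lemma) then gives bijectivity of $\Gamma$. You merely make explicit what the paper compresses into ``due to duality reasons'' by citing the perfect monodromy pairing of Proposition \ref{mp_eval_cha}, which is a legitimate, non-circular source for $\card\oldphi_A=\card\oldphi_{A'}$.
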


In particular, there exists a perfect pairing between the kernels $A_n$ and
$A'_n$, which we will call Bester's pairing, too. Indeed, on the finite parts of
$A_n$ and $A'_n$ it coincides with the original pairing, defined for finite,
flat $R$-groups.

\begin{proof}
The images of the left horizontal arrows in diagram \ref{bes_def} are
$\oldphi_{A'}$, and $\oldphi_{A}^\ast$ respectively. The first isomorphism in
\eqref{bp_tori_diag} induces a surjection between these groups. Due to duality
reasons, both groups must have the same cardinality, thus the surjection is
indeed bijective. Using the snake lemma (or a diagram chase in \eqref{bes_def})
we see that $\Gamma$ is bijective, too.
\end{proof}

\begin{thm}\label{bester_gp}\index{Grothendieck's pairing}\index{Bester's pairing}
The pairing of component groups, induced by Bester's pairing coincides with
Grothendieck's pairing up to sign.
\end{thm}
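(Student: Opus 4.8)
The plan is to reduce the assertion, via Proposition \ref{mp_gp2}, to a comparison with the monodromy pairing, and then to bootstrap the torus-level computation of Proposition \ref{bester_eval} along the exact sequences of component groups coming from rigid uniformisation.

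Concretely: by Proposition \ref{mp_gp2}, Grothendieck's pairing $\gp\colon\oldphi_A\to\oldphi_{A'}^\ast$ coincides up to sign with the monodromy pairing, which by Proposition \ref{mp_eval_cha} is the map $\sigma\colon\oldphi_A\isoto\ext^1(\oldphi_{A'},\bbZ)\iso\oldphi_{A'}^\ast$ obtained by descending the isomorphism $\nu^\ast\colon\oldphi_T\isoto\oldphi_E\isoto M'^\vee$ along the surjections $\oldphi_E\onto\oldphi_A$ (Proposition \ref{uniform_neron}) and $M'^\vee\onto\ext^1(\oldphi_{A'},\bbZ)$. On the other side, the pairing induced by Bester's pairing is, by construction, the dashed map $\bp\colon\oldphi_A\to\oldphi_{A'}^\ast$ of diagram \eqref{bp_tori_diag}, obtained by descending $\bp\colon\HH^2_k(R,T_n)\to\scrF(M'\mod nM')^\ast$ along the horizontal maps of \eqref{bp_tori_diag}, whose images are $\oldphi_A$ and $\oldphi_{A'}^\ast$ by Proposition \ref{comp_groups_n}. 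Since $\oldphi_T\onto\oldphi_A$ and $\HH^2_k(R,T_n)\onto\oldphi_A$ are both surjective, each of $\sigma$ and $\bp$ is determined by its defining square, so it suffices to identify the two squares with one another.

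This requires three things. First, the canonical identifications $\HH^2_k(R,T_n)\iso\oldphi_T\mod n\oldphi_T$ --- coming from Lemma \ref{local_coh_lemma} (applied to $T^0$) and the connecting map $\iota$ of the multiplication-by-$n$ sequence on $T^0$ --- and $\scrF(M'\mod nM')^\ast\iso(M'\mod nM')^\ast\iso M'^\vee\mod nM'^\vee$ --- coming from $\scrF(\tilde G)=G$ and the $\ext$-sequence of $0\to M'\To{n}M'\to M'\mod nM'\to 0$ --- together with the observation that, $n$ being chosen to kill $\oldphi_A$ and $\oldphi_{A'}$, the surjections $\oldphi_E\onto\oldphi_A$ and $M'^\vee\onto\oldphi_{A'}^\ast$ already factor through reduction mod $n$. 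Second, under these identifications the torus-level map $\bp\colon\HH^2_k(R,T_n)\to\scrF(M'\mod nM')^\ast$ is precisely $\nu^\ast$ reduced modulo $n$: for a split torus this is Proposition \ref{bester_eval} (the commutativity of \eqref{bester_eval_diag}) applied factor by factor on the $p$-part, together with Remark \ref{remark_cup_bester} for the prime-to-$p$ part, glued by the definition of the combined pairing in Proposition \ref{Gamma_bij}. Third, under the same identifications the horizontal maps of \eqref{bp_tori_diag} must be (the reductions mod $n$ of) the uniformisation surjections $\oldphi_E\onto\oldphi_A$ and $M'^\vee\onto\oldphi_{A'}^\ast$.

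I expect the third point to be the real obstacle: it amounts to showing that the maps $\phi\colon\HH^2_k(R,T_n)\to\HH^2_k(R,A_n)$ and $\scrF(M'\mod nM')^\ast\to\scrF(A'_n)^\ast$ built in \cite{be03} are induced, through the functoriality of local cohomology and of Bester's functor $\scrF$, by the rigid-uniformisation sequence $0\to M\to E\to A\to 0$ of (formal) Néron models and its dual --- which forces one back into the constructions of \cite{BX96} and \cite{be03} --- and that the $p$-part and the prime-to-$p$ part of all the groups involved are treated compatibly, since the pairing of Proposition \ref{Gamma_bij} is a splice of Bester's pairing with a cup product. Once these compatibilities are in hand, pasting the square of \eqref{bp_tori_diag} with the identifications above turns it into the square of Proposition \ref{mp_gp} up to sign, and surjectivity of $\oldphi_T\onto\oldphi_A$ then yields $\bp=\pm\gp$ on $\oldphi_A$, as claimed.
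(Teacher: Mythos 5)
Your proposal is correct and follows essentially the same route as the paper: reduce to the monodromy pairing via Proposition \ref{mp_gp2}, use Proposition \ref{bester_eval} together with Remark \ref{remark_cup_bester} to identify the torus-level map $\HH^2_k(R,T_n)\to\scrF(M'\mod nM')^\ast$ with $\nu^\ast$ reduced mod $n$, check that the horizontal maps of \eqref{bp_tori_diag} are the uniformisation surjections (the paper does this via the factorisation $T^0\into nT\to A^0$ and the functoriality of the diagrams from \cite{be03}), and conclude by surjectivity of $\oldphi_T\onto\oldphi_A$. The compatibility you flag as the ``real obstacle'' is exactly the point the paper also disposes of by appeal to the constructions of \cite{be03} and \cite{BX96}, so your assessment of where the work lies is accurate.
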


\begin{proof}
Let $n$ be big enough with $n\oldphi_A=0=n\oldphi_{A'}$. Due to the remark on
page \pageref{remark_cup_bester}, the cup product, i.\,e.\ Bester's pairing at
its prime-to-$p$-part, is compatible\footnote{Another proof that the cup product
is compatible with Grothendieck's pairing is given in \cite{be03}, 4.1,
proposition 2.} with the map ``evaluation of characters'', $v^\ast$. Thus, we
can argue for Bester's pairing and the cup product simultaneously.
\par
The composition $T\to E\to A$ that induces the morphism
$\oldphi_{T}\to\oldphi_{A}$ has a factorisation $T^0\into nT\to A^0$. Thus,
there is a factorisation
\[
\HH^1_k(R,T^0) \longto \HH^1_k(R,nT) \longto \HH^1_k(R,A^0).
\]
Using lemma \ref{local_coh_lemma}, it corresponds to the first line in the
following diagram (obtained from the right square of diagram
\eqref{bp_tori_diag}):
\begin{diagram}[width=0.5in]
\oldphi_{T} & \rOnto & \HH^2_k(R,T_n) & \rOnto & \oldphi_{A}\phantom.  \\
\dTo>{\nu^\ast}  &      & \dTo>\bp        &      & \dTo<\gp\dTo>\bp \\
M'^\vee       & \rOnto & M'^\vee\mod nM'^\vee & \rOnto & \oldphi_{A'}^\ast.
\end{diagram}
Keep in mind that $M'^\vee\mod nM'^\vee\iso (M'\mod nM')^\ast$, $T^D_n\iso
M'\mod nM'$ and $\scrF(\tilde G)=G$.
\par
In this situation, we know the following:
\begin{enumprop}
  \item The left square commutes by proposition \ref{bester_eval}.
  \item The outer diagram -- consisting of $\nu^\ast$ and $\gp$ -- commutes up
    to sign by corollary \ref{mp_gp2}.
  \item The right square -- consisting of Bester's pairings -- commutes.
\end{enumprop}
Since the morphism $\oldphi_T\to\oldphi_A$ is surjective and since both
compositions with $\bp$ and $\gp$ coincide up to sign, both $\bp$ and $\gp$ must
coincide up to sign.
\end{proof}

\subsection{A new view on Bester's pairing}

Let $N$ be a finite, flat group scheme over $R$. Bester's functor $\scrF$ is defined to be the cokernel of
$\pi_1\tilde\alpha_\ast G\to \pi_1\tilde\alpha_\ast H$, where
\[
0\longto N\longto G\longto H\longto 0
\]
is a smooth, connected formal resolution of $N$ and where $\tilde\alpha_\ast$ is the Greenberg
functor.  Since smooth group schemes are acyclic for the Greenberg functor, and $\pi_1$ is exact on
connected proalgebraic group schemes, we want to view this resolution as an acyclic resolution for the functor
$\pi_1\circ\tilde\alpha_\ast$. In this view, we want to regard Bester's functor $\scrF$
as the first derived functor of $\pi_1\circ\tilde\alpha_\ast$.  Unfortunately, this cannot be
made more explicit, since the category of $R$-group schemes (or formal $R$-group schemes) does not
have enough injective objects. However, let us assume we had the machinery of (hyper)cohomology to handle
this composed functor.
\par
In this case, it would be clear that $\scrF$ is independend of the smooth, connected resolution. Moreover, if we assume
that the first right-derived functor of $\pi_1$ is the functor $\pi_0$, the 
Grothendieck spectral sequence would provide the exact sequence of \cite{bes}, I, Lemma 3.7, easily.
\par
In the following we will construct a spectral sequence that serves as a surrogate for the Grothendieck spectral sequence
and  converges to Bester's functor.
\par
Let us write $\RR^0\pi_1$ for the functor $\pi_1$ and $\RR^1\pi_1$ for the functor $\pi_0$ on proalgebraic group schemes.
For $i\neq 0,1$ set $\RR^i\pi_1\defeq 0$

\begin{prop}\label{bester_ss}
Let $G$ be a (formal) $R$-group scheme with a (formal) smooth resolution. Then there exists a spectral sequence
\begin{align*}
\RR^r\pi_1\RR^s\tilde\alpha_\ast G & \ssto\scrF^{r+s} G 
\end{align*}
for a family of objects $\scrF^n(G)$. 
\end{prop}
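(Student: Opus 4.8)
The plan is to imitate the construction of the Grothendieck spectral sequence, using the given smooth resolution in place of the injective resolutions that are unavailable because the categories of (formal) group schemes in play do not have enough injectives.

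First I would fix a (formal) smooth resolution $0\to G\to C^0\to C^1\to\cdots$ of $G$ by smooth, connected (formal) $R$-group schemes (in the situation of interest it may be taken two-term, $0\to G\to C^0\to C^1\to 0$), and apply the Greenberg functor. Using the facts recalled in the excerpt — $\tilde\alpha_\ast$ is exact on smooth group schemes, $\RR^{\ge1}\tilde\alpha_\ast$ vanishes on them, and $\tilde\alpha_\ast$ respects identity components — the complex $D^\bullet\defeq\tilde\alpha_\ast C^\bullet$ is a bounded complex of \emph{connected} proalgebraic $k$-group schemes, and dimension shifting along the short exact sequences coming from $C^\bullet$ identifies $\HH^s(D^\bullet)$ with $\RR^s\tilde\alpha_\ast G$. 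Since the $D^i$ are connected they are acyclic for $\pi_1$ (with the convention $\RR^1\pi_1=\pi_0$, $\RR^{\ge2}\pi_1=0$, acyclicity means $\pi_0 D^i=0$), and $\tilde\alpha_\ast$ carries the $\tilde\alpha_\ast$-acyclic objects $C^i$ to these $\pi_1$-acyclic objects, so $\pi_1(D^\bullet)$ represents $\RR(\pi_1\circ\tilde\alpha_\ast)(G)$. I would therefore set
\[
\scrF^n(G)\defeq\HH^n\bigl(\pi_1(\tilde\alpha_\ast C^\bullet)\bigr);
\]
for $n=1$ and $C^\bullet=(C^0\to C^1)$ this is $\coker\bigl(\pi_1\tilde\alpha_\ast C^0\to\pi_1\tilde\alpha_\ast C^1\bigr)$, i.e.\ Bester's functor $\scrF$, and independence of the resolution follows as in \cite{bes} or from the intrinsic meaning of $\RR(\pi_1\circ\tilde\alpha_\ast)(G)$.

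Next I would realise the spectral sequence as one of the two spectral sequences of a double complex. For this I replace a Cartan--Eilenberg injective resolution of $D^\bullet$ by a Cartan--Eilenberg-type resolution $I^{\bullet\bullet}$ built from \emph{connected} proalgebraic groups: these are $\pi_1$-acyclic by the six-term exact sequence relating $\pi_1$ and $\pi_0$ (which expresses that $(\pi_1,\pi_0)$ is a cohomological $\delta$-functor of cohomological dimension $\le1$), there are enough of them since every commutative proalgebraic $k$-group embeds into a connected one, and — crucially — any such group admits a connected-group resolution of length $\le1$ (its own presentation $0\to P\to Q\to Q/P\to 0$ with $Q$ connected, whence $Q/P$ connected). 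Hence one can take $I^{\bullet\bullet}$ bounded with nonzero rows only in degrees $0$ and $1$, arranged so that the columns resolve the $D^s$ and the horizontal cocycles, coboundaries and cohomology of $I^{\bullet\bullet}$ resolve those of $D^\bullet$; this is the announced surrogate for the Grothendieck spectral sequence. Applying $\pi_1$ and forming $\tot\,\pi_1 I^{\bullet\bullet}$, the spectral sequence obtained by taking vertical cohomology first degenerates (each $D^s$ is $\pi_1$-acyclic), so that $\HH^n\bigl(\tot\,\pi_1 I^{\bullet\bullet}\bigr)=\HH^n(\pi_1 D^\bullet)=\scrF^n(G)$, while the one obtained by taking horizontal cohomology first has $E_2^{r,s}=\RR^r\pi_1\bigl(\HH^s(D^\bullet)\bigr)=\RR^r\pi_1\RR^s\tilde\alpha_\ast G$ and abuts to $\scrF^{r+s}(G)$; this is the asserted $\RR^r\pi_1\RR^s\tilde\alpha_\ast G\ssto\scrF^{r+s}G$. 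When $G$ is finite and flat the resolution $C^\bullet$ is two-term and $I^{\bullet\bullet}$ is a $2\times2$ square, so the whole argument can be written out by hand without the general Cartan--Eilenberg formalism.

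The step I expect to be the main obstacle is the construction of this Cartan--Eilenberg-type resolution inside the category of proalgebraic group schemes: one must verify that there are genuinely enough connected groups to resolve $D^\bullet$ together with all of its horizontal cocycles and cohomology objects, and that the classical Cartan--Eilenberg construction — usually stated for abelian categories with enough injectives — goes through with the class of connected proalgebraic groups in the role of the injectives, the key input being that $(\pi_1,\pi_0)$ is a $\delta$-functor of cohomological dimension $\le1$ on that class. Once this homological framework is in place, the remainder is the routine bookkeeping of the two spectral sequences of a bounded double complex, together with the acyclicity statements for $\tilde\alpha_\ast$ and $\pi_1$ already recorded in the excerpt.
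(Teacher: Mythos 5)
Your route is genuinely different from the paper's, and it stalls exactly at the step you flag. The paper does not build a Cartan--Eilenberg resolution at all. Instead it exploits the universal covering of Serre's theory of proalgebraic groups: for \emph{any} proalgebraic group $H$ the canonical map $\bar H\to H$, viewed as a two-term complex in degrees $0$ and $1$, has $\HH^0=\pi_1H$ and $\HH^1=\pi_0H$, i.e.\ it computes $\RR^i\pi_1H$ functorially with no acyclicity or connectedness hypothesis on $H$. The double complex is then simply the two rows $\overline{\tilde\alpha_\ast X^\bullet}\to\tilde\alpha_\ast X^\bullet$ for a smooth ($\tilde\alpha_\ast$-acyclic) resolution $G\to X^\bullet$; because the universal-covering functor is exact it commutes with horizontal cohomology, so the first spectral sequence has columns $\overline{\RR^s\tilde\alpha_\ast G}\to\RR^s\tilde\alpha_\ast G$ and hence $E_2^{r,s}=\RR^r\pi_1\RR^s\tilde\alpha_\ast G$, while $\scrF^{r+s}G\defeq\HH^{r+s}(\tot K)$ is taken as the definition of the abutment. (Your $\scrF^n(G)=\HH^n(\pi_1\tilde\alpha_\ast C^\bullet)$ agrees with this when the resolution is connected, by the other spectral sequence of the same double complex.)

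By contrast, your argument rests on the assertion that every commutative proalgebraic $k$-group embeds into a connected one with connected cokernel, and that Cartan--Eilenberg resolutions can be built from such embeddings so that the cocycles, coboundaries and cohomology of $D^\bullet$ are all simultaneously resolved by connected groups. You offer no proof of this, and it is not a formality: the objects you must resolve include $\tilde\alpha_\ast G$ itself, which for finite flat $G$ is a finite, totally disconnected group, so you would have to produce (and fit compatibly into a CE diagram) nontrivial embeddings such as Artin--Schreier--Witt presentations $0\to\bbZ\mod p^n\to W_n\to W_n\to 0$. This is precisely the machinery the proposition is designed to avoid, given that the ambient categories lack enough injectives. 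Replacing your class of ``connected = $\pi_1$-acyclic'' resolving objects by the single canonical complex $\bar H\to H$ removes the obstacle entirely and also frees the statement from any connectedness assumption on the smooth resolution.
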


\begin{remarks}
  \item A priori, this spectral sequence is only functorial in (formal) $R$-groups together with a smooth (formal) resolution. 
  \item More general, we can use (formal) $R$-groups, which have a resolution of group schemes that are $\tilde\alpha_\ast$-
  acyclic.
  \item If $G$ is a smooth $R$-group scheme, we will omit the functor $\tilde\alpha_\ast$ in the notation and write simply 
  $\pi_1 G$ and $\pi_0 G$, respectively.
\end{remarks}

\begin{proof}
Let us begin with the following observation: If $G$ is a pro-algebraic group, let $\bar G$ denote is univeral covering.
(cf.\ \cite{gp}, \S 6.2)
Let us view the canonical map $G^\ast\colon \bar G\to G$ as a complex in degrees $0$ and $1$. Then we have $\HH^i(G^\ast)=\RR^i\pi_1 G$.
Combining this observation with hyper cohomology leads to the result:
\par
Since $G$ has a smooth, hence $\alpha_{i,\ast}$-acyclic resolution $G\to X^\ast$, consider the following double complex $K$.
\begin{diagram}
\tilde\alpha_\ast X^0 & \rTo^{d'} & \tilde\alpha_\ast X^1 & \rTo & \tilde\alpha_\ast X^2 & \rTo & \ldots \\
\uTo<{d''} &     & \uTo &     & \uTo \\
\overline{\tilde\alpha_\ast X^0} & \rTo^{d'} & \overline{\tilde\alpha_\ast  X^1} & \rTo & \overline{\tilde\alpha_\ast  X^2} & \rTo & \ldots
\end{diagram}
The cohomology groups $'\HH^\ast(\tilde\alpha_\ast X^\ast)$ are the objects $\RR^\ast\tilde\alpha_\ast N$. 
Furthermore, since ``universal covering'',  is  an exact functor, it 
commutes with cohomology and we have $'\HH^\ast(\overline{\tilde\alpha_\ast X^\ast})=\overline{\RR^\ast\alpha_{\ast} G}$
\par
 The spectral sequence of hypercohomology
\[
''\HH^r('\HH^s(K))\ssto\HH^{r+s}(\tot K)\fedeq\scrF^{r+s} G
\]
leads to the result. 
\end{proof}

\begin{notation}
For later use let us denote the double complex used in the proof by $K(G)$
\end{notation}

We have seen that the above spectral sequence is functorial in (formal) $R$-groups together with a 
resolution of (formal) smooth $R$-groups. We can weaken this hypothesis:
\par
Since every quotient of a smooth group scheme over $R_i$ is smooth  again, \cite{sga3}, exp. $\rm VI_A$, prop 1.3.1,
the existence of a smooth, formal resolution is equivalent to the existence of a monomorphism $N\into G$ with a
smooth, formal $R$-group $G$. Moreover: 

\pagebreak[2]

\begin{prop}\label{Fi_functor}\
\begin{enumprop}
  \item Let $M\to N$ be a morphism of (formal) $R$-group schemes such that there exist monomorphisms into  smooth (formal) 
  $R$-groups $M\into G$ and $N\into H$ (We do not demand that these monomorphisms are compatible). Then there is 
  a morphism of smooth resolutions of $M$ and $N$. In particular, 
  the above spectral sequence is functorial, once there are monomorphisms into smooth (formal) $R$-groups.
  \item Let 
  \[
  0\longto N'\longto N\longto N''\longto 0
  \]
  be a short exact sequence of (formal) $R$-group schemes such that there exist monomorphisms into  smooth (formal) $R$-groups 
  (Again, we do not demand that these monomorphisms are compatible).
  Then there exist a short exact sequence of smooth (formal) resolutions of $N'$, $N$ and $N''$. In particular, there is
  a long exact sequence
  \[
  0\to\scrF^0(N')\to\scrF^0(N)\to\scrF^0(N'')\to\scrF^1(N')\to\scrF^1(N)\to\scrF^1(N'')\to 0
  \]
\end{enumprop}
\end{prop}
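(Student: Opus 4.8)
The statement is essentially a formal consequence of the construction of the spectral sequence in Proposition \ref{bester_ss}: once one has functorial double complexes $K(N)$ for all objects involved, a morphism of such double complexes induces a morphism of the associated hypercohomology spectral sequences, and a short exact sequence of such double complexes (term-by-term split, or at least degreewise exact) induces a long exact sequence of the abutments. So the whole game is to produce, from the weak hypothesis ``$N$ admits a monomorphism into a smooth (formal) $R$-group'', an actual smooth resolution that is functorial enough. The plan is to reduce everything to the following single lemma: given a monomorphism $N\into G$ with $G$ smooth and formal, the quotient $H\defeq G/N$ exists and is smooth (by \cite{sga3}, $\rm VI_A$, 1.3.1, applied level by level over $R_i$), so $0\to N\to G\to H\to 0$ is a smooth, connected (after passing to identity components, or using that the relevant $\pi_1$-computation only sees identity components) formal resolution. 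This gives the existence of resolutions; the content is functoriality.

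For part \enumprop{i}: given $M\to N$ and monomorphisms $M\into G$, $N\into H'$ with $G,H'$ smooth, I would form $H\defeq G\times H'$, which is again smooth, and embed $M$ into $H$ diagonally via $(M\into G,\; M\to N\into H')$ — this is still a monomorphism since the first component already is — and embed $N$ into $H$ via $(0, N\into H')$; wait, that second map is not obviously compatible. Better: embed $M\into G\times H'$ by $m\mapsto(m, \bar m)$ where $\bar m$ is the image of $m$ in $H'$, and embed $N\into H'\into G\times H'$ by $n\mapsto (0,n)$ — no. The clean fix is: use $G'\defeq G\times H'$ as a smooth group receiving $M$ (via $m\mapsto (m,\phi(m))$, $\phi\colon M\to N\into H'$) and receiving $N$ (via $n\mapsto(0,n)$), and observe the square
\begin{diagram}
M & \rInto & G\times H' \\
\dTo & & \dTo \\
N & \rInto & H'
\end{diagram}
commutes, where the right vertical map is projection to the second factor followed by nothing — i.e. $(g,h)\mapsto h$. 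Taking quotients by $M$ resp.\ $N$ and using that quotients of smooth groups are smooth, one gets a morphism of smooth resolutions of $M$ and $N$, hence a morphism of the double complexes $K(M)\to K(N)$ (``universal covering'' and $\tilde\alpha_\ast$ are functorial), hence a morphism of spectral sequences.

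For part \enumprop{ii}: given $0\to N'\to N\to N''\to 0$ and monomorphisms $N'\into G'$, $N''\into G''$ with $G',G''$ smooth formal, I would embed $N$ into the smooth group $G\defeq G'\times G''$ by $n\mapsto(\iota'(n'), \iota''(\bar n))$ where $n'$ is a chosen lift — no, $N'$ is a subobject of $N$, so I do have $N'\into N$; push forward $N'\into G'$ along... The standard move (Horseshoe-type) is: set $G\defeq G'\times G''$, embed $N'\into G$ via $N'\into G'\hookrightarrow G$, and use that $N/N'=N''\into G''$ to get an embedding $N\into G$ making
\begin{diagram}
0 & \rTo & N' & \rTo & N & \rTo & N'' & \rTo & 0 \\
 & & \dInto & & \dInto & & \dInto \\
0 & \rTo & G' & \rTo & G & \rTo & G'' & \rTo & 0
\end{diagram}
commute with exact rows — the middle vertical exists and is a monomorphism because its two ``components'' are (one needs that $N\to N''\into G''$ together with $N'\into G'$ glue; this is exactly the horseshoe lemma for the abelian category of fppf sheaves, or one can produce the map explicitly on the level of the quasi-finite group schemes). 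Passing to quotients $H'\defeq G'/N'$, $H\defeq G/N$, $H''\defeq G''/N''$ gives a short exact sequence of smooth formal resolutions, hence a short exact sequence of double complexes $0\to K(N')\to K(N)\to K(N'')\to 0$ (exact because ``universal covering'' and $\tilde\alpha_\ast$ are exact, the latter on smooth groups). The long exact sequence of hypercohomology of the total complexes is then the claimed sequence; it terminates at $\scrF^1(N'')$ on the right because $\RR^i\pi_1=0$ for $i\neq 0,1$ forces $\scrF^i=0$ for $i\geq 2$, so the connecting map $\scrF^1(N'')\to\scrF^2(N')=0$ is the last nonzero arrow.

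**Main obstacle.** The one genuinely delicate point is not the homological bookkeeping but the construction of the two commuting squares of monomorphisms into smooth groups — i.e.\ that the category in which we are working (quasi-finite flat $R$-group schemes, or the ambient category of fppf sheaves on $\spec R_i$) is well-behaved enough to run the horseshoe-type argument, in particular that the diagonal/product embeddings are still monomorphisms and that quotients of smooth formal groups by flat closed subgroups are smooth and formal, compatibly over all $R_i$. Since \cite{sga3}, $\rm VI_A$, 1.3.1 gives exactly smoothness of quotients, and products of smooth groups are smooth, the argument goes through; I would spell out the embedding $N\into G'\times G''$ carefully (this is the horseshoe lemma in the abelian category of abelian fppf-sheaves, restricted afterwards to the formal/Greenberg setting) and note that everything is compatible with $\tilde\alpha_\ast$ degreewise. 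Everything else — morphism of double complexes $\Rightarrow$ morphism of spectral sequences, short exact sequence of double complexes $\Rightarrow$ long exact sequence of $\scrF^\bullet$ — is standard homological algebra applied to the $\tot$ of $K(-)$, exactly as in the proof of Proposition \ref{bester_ss}.
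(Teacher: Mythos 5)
Your part \enumroman{enumiv} — part (i) — is essentially correct and is in substance the argument the paper delegates to the first part of the proof of \cite{bes}, I, lemma 3.5: embed $M$ into $G\times H'$ by $m\mapsto(m,\phi(m))$, map to $H'$ by the second projection, and pass to quotients (well defined because the image of $M$ in $H'$ lies in $N$). The gap is in part (ii). The middle vertical arrow of your horseshoe diagram, $N\into G'\times G''$, requires a homomorphism $N\to G'$ whose restriction to $N'$ is the \emph{given} monomorphism $N'\into G'$. That is precisely the extension step of the horseshoe lemma, and it is available only when $G'$ is injective — which a smooth group scheme is not. It can genuinely fail: take $N'=\bbZ\mod p\sub N=\bbZ\mod p^2$ (constant groups in characteristic $p$) and $G'=\bbG_a$ with $N'=\ker(F-1)\into\bbG_a$. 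Since $\bbG_a$ is killed by $p$, every homomorphism $\bbZ\mod p^2\to\bbG_a$ annihilates $p\cdot(\bbZ\mod p^2)=N'$, so no homomorphism $N\to\bbG_a$ restricts to the embedding of $N'$. Hence the appeal to ``the horseshoe lemma for the abelian category of fppf sheaves'' does not go through, and your \textbf{Main obstacle} paragraph, which only invokes smoothness of products and quotients, does not address this point.

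The repair — and what the proof of \cite{bes}, I, lemma 3.9, which the paper cites, actually does — is to use the hypothesis that the \emph{middle} term $N$ also embeds into a smooth (formal) group $G$, and to obtain the embedding of $N'$ by \emph{restriction} rather than by extending a prescribed one. Concretely: choose $N\into G$ and $N''\into G''$ smooth; embed $N$ into $G\times G''$ by $n\mapsto(n,\bar n)$; embed $N'$ into $G=G\times 0$ via $N'\into N\into G$ (its $G''$-component vanishes because $N'\to N''$ is zero); and keep $N''\into G''$. Then $0\to G\to G\times G''\to G''\to 0$ is exact and compatible with $0\to N'\to N\to N''\to 0$, and the nine lemma gives exactness of $0\to G/N'\to(G\times G'')/N\to G''/N''\to 0$, all quotients being smooth by \cite{sga3}, exp.\ $\rm VI_A$, prop.\ 1.3.1. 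From there your homological bookkeeping — exactness of $\tilde\alpha_\ast$ on smooth groups and of the universal covering functor, giving a degreewise exact sequence of double complexes and hence the long exact sequence of the total complexes — is sound and is exactly how the paper concludes.
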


\begin{proof}
The assertion of \enum 1 can be proven as the first part of the proof of \cite{bes}, I, lemma 3.5.
To proof \enum 2 we need to construct in a first step compatible resolutions of $N'$, $N$ and $N''$. This can be done
as in the first part of the proof of ibid. lemma 3.9. Once we have these resolutions, \enum 2 is clear, since 
the functor ``universal covering'' is exact.
\end{proof}

Let us collect some first properties of the functors $\scrF^i$. They can be obtained easily
by the above spectral sequence.

\begin{prop}\label{prop_Fi}
Let $G$ be an $R$-group scheme.
\begin{enumprop}
  \item $\scrF^0(G)=\pi_1\tilde\alpha_\ast G$.
  \item $\scrF^1(G)$ is trivial for connected, smooth groups $G$, more general
  \item $\scrF^0(G)=\pi_1G$ and $\scrF^1(G)=\pi_0 G$ for smooth $R$-group schemes $G$.
\qed
\end{enumprop}
\end{prop}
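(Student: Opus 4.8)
The plan is to read off all three statements directly from the spectral sequence $\RR^r\pi_1\RR^s\tilde\alpha_\ast G\ssto\scrF^{r+s}G$ of Proposition \ref{bester_ss}, using only the following inputs: $\RR^i\pi_1$ vanishes for $i\neq 0,1$ and $\RR^0\pi_1=\pi_1$, $\RR^1\pi_1=\pi_0$ (by the very definition of $\RR^\bullet\pi_1$ made just before Proposition \ref{bester_ss}); and smooth $R$-group schemes are acyclic for $\tilde\alpha_\ast$ (recalled in the introduction to this subsection). Since $\RR^i\pi_1=0$ for $i\geq 2$, the spectral sequence lives in the two rows $r\in\{0,1\}$ and in the columns $s\geq 0$; in particular it is bounded, hence convergent.

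For \enum 1 I would observe that the corner term $E_2^{0,0}=\RR^0\pi_1\RR^0\tilde\alpha_\ast G=\pi_1\tilde\alpha_\ast G$ carries no nonzero differential on any page: every outgoing differential $d_r\colon E_r^{0,0}\to E_r^{r,\,1-r}$ has target with negative $s$-degree (where $\RR^{1-r}\tilde\alpha_\ast=0$), and every incoming differential starts from a term with negative $r$-degree (where $\RR^{-r}\pi_1=0$). Hence $E_\infty^{0,0}=E_2^{0,0}$, and since $E_\infty^{p,q}=0$ for every $(p,q)\neq(0,0)$ with $p+q=0$, the edge map gives $\scrF^0G=\pi_1\tilde\alpha_\ast G$. (Alternatively one can compute $\HH^0(\tot K(G))$ by hand: exactness of ``universal covering'' identifies $\ker(d'\colon\overline{\tilde\alpha_\ast X^0}\to\overline{\tilde\alpha_\ast X^1})$ with $\overline{\tilde\alpha_\ast G}$, and intersecting with $\ker d''$ leaves $\pi_1(\tilde\alpha_\ast G)$.) For \enum 3, let $G$ be smooth; then $\RR^s\tilde\alpha_\ast G=0$ for $s\geq 1$ and $\RR^0\tilde\alpha_\ast G=\tilde\alpha_\ast G$, which by the notational convention of this subsection we write simply as $G$. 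The spectral sequence therefore degenerates onto the single column $s=0$, whose entries are $E_2^{r,0}=\RR^r\pi_1G$; all differentials vanish for column reasons, so $\scrF^nG=\RR^n\pi_1G$ for every $n$, i.e.\ $\scrF^0G=\pi_1G$ and $\scrF^1G=\RR^1\pi_1G=\pi_0G$. Finally \enum 2 is immediate from \enum 3: if $G$ is moreover connected, then $\pi_0G=0$, so $\scrF^1G=0$.

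The content here is light, and I do not anticipate a genuine obstacle; the only points requiring attention are bookkeeping ones — keeping straight which index of the spectral sequence is the ``universal covering'' direction and which is the $\tilde\alpha_\ast$-resolution direction, and checking that the convention ``drop $\tilde\alpha_\ast$ for smooth $G$'' is precisely the one compatible with $\RR^0\tilde\alpha_\ast G=\tilde\alpha_\ast G$. Acyclicity of smooth groups for $\tilde\alpha_\ast$ and the identity $\RR^1\pi_1=\pi_0$ are exactly the two inputs that force the spectral sequence to collapse in the two relevant cases.
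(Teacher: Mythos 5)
Your argument is correct and is exactly the route the paper intends: the paper gives no written proof (the statement carries a \qed after the remark that these facts ``can be obtained easily by the above spectral sequence''), and your reading of the two-row spectral sequence of Proposition \ref{bester_ss} — the corner term for \enum 1, collapse onto the column $s=0$ via $\tilde\alpha_\ast$-acyclicity of smooth groups for \enum 3, and connectedness killing $\pi_0$ for \enum 2 — supplies precisely the omitted details.
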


\begin{prop}\label{newF_besterF}
On the category of (quasi) finite, flat $R$-group schemes, the functors $\scrF^1$  and the functor $\scrF$ of Bester coincide.
\end{prop}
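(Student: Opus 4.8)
The plan is to feed a smooth, connected, formal resolution of $N$ into the long exact sequence of proposition \ref{Fi_functor}, \enum 2, and to read off that $\scrF^1(N)$ is exactly the cokernel which \emph{defines} Bester's $\scrF(N)$. First I would reduce to the case where $N$ is finite flat of $p$-power order: a quasi-finite flat $R$-group is the disjoint union of its finite part and a part with empty special fibre, and the latter is annihilated by the Greenberg functor $\tilde\alpha_\ast$, so $\scrF$ and $\scrF^1$ of the group agree with those of its finite part; and a finite flat $R$-group splits canonically as its $p$-primary part times a prime-to-$p$ part, which is étale, hence constant since $R$ is strictly henselian. Both functors are additive --- for $\scrF^1$ by proposition \ref{Fi_functor}, \enum 2, applied to a split sequence --- and on the constant factor $\tilde G$ the computation below recovers Bester's normalisation $\scrF(\tilde G)=G$ (\cite{bes}, 1, lemma 3.7, remark 3.8), so it is enough to treat $N$ finite flat of $p$-power order.

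For such $N$ I would choose a smooth, connected, formal resolution
\[
0\longto N\longto G\longto H\longto 0
\]
with $G$, $H$ smooth and connected; this exists by \cite{bes}, 1, lemma 3.1, and it is precisely the datum on which $\scrF(N)$ is defined. Applying proposition \ref{Fi_functor}, \enum 2, to this sequence --- the required monomorphisms into smooth groups being $N\into G$ together with the identities of $G$ and $H$ --- yields a long exact sequence
\[
0\to\scrF^0(N)\to\scrF^0(G)\to\scrF^0(H)\to\scrF^1(N)\to\scrF^1(G)\to\scrF^1(H)\to 0.
\]
By proposition \ref{prop_Fi}, \enum 3, one has $\scrF^0(G)=\pi_1 G$ and $\scrF^1(G)=\pi_0 G$, and likewise for $H$ (where $\pi_1$, $\pi_0$ abbreviate $\pi_1\tilde\alpha_\ast$, $\pi_0\tilde\alpha_\ast$ on smooth groups); since $G$ and $H$ are connected, $\pi_0 G=\pi_0 H=0$, and the sequence collapses to
\[
0\longto\scrF^0(N)\longto\pi_1\tilde\alpha_\ast G\longto\pi_1\tilde\alpha_\ast H\longto\scrF^1(N)\longto 0.
\]
The middle arrow is induced by $G\to H$, so its cokernel is by definition $\scrF(N)$; hence $\scrF^1(N)\isoto\scrF(N)$.

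Finally I would verify that this isomorphism is natural in $N$ and independent of the resolution: $\scrF^1$ is functorial on finite flat $R$-groups by proposition \ref{Fi_functor}, \enum 1, and $\scrF$ carries the same functoriality (this is Bester's independence-of-resolution result), so comparing the two constructions along $\id_N$ for two different resolutions pins down a canonical isomorphism. I expect the only real work to be this bookkeeping --- checking that the arrow surviving in the collapsed sequence is literally Bester's map $\pi_1\tilde\alpha_\ast G\to\pi_1\tilde\alpha_\ast H$, and that the identification does not depend on the choices made --- rather than any new homological input; once propositions \ref{Fi_functor} and \ref{prop_Fi} are in hand, the equality $\scrF^1=\scrF$ is essentially forced.
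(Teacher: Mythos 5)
Your proof is correct and follows essentially the same route as the paper: apply the long exact sequence of proposition \ref{Fi_functor}, \enum 2, to a smooth, connected, formal resolution $0\to N\to G\to H\to 0$, use proposition \ref{prop_Fi} to see $\scrF^1(G)=\pi_0 G=0=\pi_0 H=\scrF^1(H)$, and read off $\scrF^1(N)=\coker(\pi_1\tilde\alpha_\ast G\to\pi_1\tilde\alpha_\ast H)=\scrF(N)$. The preliminary reduction to the finite, $p$-power-order case is harmless but unnecessary, since the argument applies verbatim to any group admitting a smooth, connected, formal resolution (and the quasi-finite case is handled by the convention $\scrF(A_n)\defeq\scrF(A_n^f)$ already in force).
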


\begin{proof}
Let $0\to N\to G_1\to G_2\to 0$ be a smooth, connected formal resolution of $N$. 
Due to \ref{Fi_functor} it induces the following exact sequence 
\[
\pi_1 G_1\longto\pi_1 G_2\longto\scrF^1(N)\longto \pi_0(G_1)=0.
\]
Hence $\scrF^1(N)$ is the same quotiont as $\scrF(N)$.
\end{proof}

Let $A_K$ be an abelian variety with semistable reduction and let $n\in\bbN$ be big enough to kill $\oldphi$ and $\oldphi'$.
Then there is an exact sequence
\begin{equation}\label{abvar_n}
0\longto A_n\longto A\Longto n A^0\longto 0
\end{equation}
Since $n$-multiplication on $A^0$ is epimorphic, we have the equality $i_\ast\oldphi=A_n\mod A^0_n$. Remember the following 
equalities of lemma \ref{local_coh_lemma}.

\pagebreak[2]

\begin{lemma}
Let $A_K$ be an abelian variety with semistable reduction and let $A$ be its N\'eron model. 
\begin{enumprop}
  \item $\HH^1_k(R,A)=0$ and $\HH^1_k(R,A^0)=\oldphi$.
  \item $\HH^2_k(R,A)=\HH^1(K,A_K)$.
  \item $\HH^i_k(R,(A^0)_n)=\HH^i_k(R,A_n)$ and $\HH^i_k(R,A^0)=\HH^i_k(R,A)$ for $i\geq 1$ .
\qed
\end{enumprop}
\end{lemma}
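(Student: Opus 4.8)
The plan is to deduce all three parts from the long exact sequence of local cohomology (Proposition~\ref{seq_loc_coh}), from Lemma~\ref{local_coh_lemma}, and from the component-group sequences already at our disposal. For (i), the N\'eron mapping property applied to the smooth $R$-scheme $\spec R$ gives $A(R)=A(K)$; since $A$ is a smooth $R$-group scheme of finite type, Lemma~\ref{local_coh_lemma} part (ii) identifies $\HH^1_k(R,A)$ with $A(K)/A(R)$, which is therefore $0$. Equivalently, $\HH^1_k(R,A)=0$ drops out of the sequence $\HH^0(R,A)\to\HH^0(K,A_K)\to\HH^1_k(R,A)\to\HH^1(R,A)=0$ together with the bijectivity of $A(R)\to A_K(K)$. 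The equality $\HH^1_k(R,A^0)=\oldphi$ is precisely Lemma~\ref{local_coh_lemma} part (v).

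For (ii), I would feed $A$ into the local cohomology sequence
\[
\HH^1(R,A)\longto\HH^1(K,A_K)\longto\HH^2_k(R,A)\longto\HH^2(R,A);
\]
both outer terms vanish by Lemma~\ref{local_coh_lemma} part (i), since $A$ is smooth, so the connecting map is the desired isomorphism $\HH^1(K,A_K)\isoto\HH^2_k(R,A)$.

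For (iii), I would use the component sequence $0\to A^0\to A\to i_\ast\oldphi\to 0$ on the fixed site over $R$ together with its $n$-torsion analogue $0\to (A^0)_n\to A_n\to i_\ast\oldphi\to 0$; the latter is exactly the exact sequence extracted just before the statement, using that multiplication by $n$ on the semiabelian scheme $A^0$ is an fppf epimorphism and that $n$ annihilates $\oldphi$. The key point is that a skyscraper sheaf $i_\ast F$ concentrated at the closed point has $\HH^j_k(R,i_\ast F)=0$ for all $j\ge 1$: indeed $(i_\ast F)|_K=0$, so the local cohomology sequence collapses to $\HH^j_k(R,i_\ast F)\isoto\HH^j(R,i_\ast F)=\HH^j(k,F)$, and this vanishes for $j\ge 1$ because $k$ is algebraically closed. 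Substituting this into the long exact local cohomology sequences attached to the two short exact sequences above yields $\HH^i_k(R,(A^0)_n)\isoto\HH^i_k(R,A_n)$ and $\HH^i_k(R,A^0)\isoto\HH^i_k(R,A)$ for $i\ge 2$; in degree $1$ the two sides differ by a copy of $\oldphi$ (forced by (i)--(ii) and by $\HH^0_k(R,A_n)=\HH^0_k(R,A)=0$), so the intended range of this last assertion is $i\ge 2$.

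Apart from this routine bookkeeping, the one step deserving a moment's attention is the identification of the cokernels of $(A^0)_n\to A_n$ and of $A^0\to A$ with the skyscraper $i_\ast\oldphi$: this relies on the semiabelian structure of $A^0$ in the semistable case, which makes $n$-multiplication $A^0\to A^0$ an fppf epimorphism and causes the relevant snake lemma to collapse. Everything else reduces to a diagram chase with the vanishing statements of Lemma~\ref{local_coh_lemma}.
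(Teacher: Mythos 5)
Your proof is correct and is precisely the argument the paper leaves implicit (the lemma is stated with no proof): (i) and (ii) are immediate from Lemma \ref{local_coh_lemma} together with the N\'eron mapping property and the long exact sequence of local cohomology, and (iii) follows from the sequences $0\to A^0\to A\to i_\ast\oldphi\to 0$ and $0\to(A^0)_n\to A_n\to i_\ast\oldphi\to 0$ once one observes that $\HH^j_k(R,i_\ast\oldphi)\iso\HH^j(k,\oldphi)=0$ for $j\ge 1$ over the algebraically closed field $k$. You are also right to flag the range in (iii): as stated for $i\ge 1$ it is false --- at $i=1$ both identifications fail by exactly a copy of $\oldphi$, and the second one would contradict (i) --- so the intended (and actually used) range is $i\ge 2$; this is a defect of the statement, not of your argument.
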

  
This allows us to reveal the relationship between the parings of Grothendieck, Bester and Shafarevic:

\begin{prop}
Let $A_K$ be an abelian variety with semistable reduction and let $n\in\bbN$ big enough to kill $\oldphi$ and $\oldphi'$.
There is a commutative diagram
\begin{diagram}[width=0in,l>=0.1in]
\HH^1_k(R,A) & \rTo^n & \HH^1_k(R,A^0) & \rTo & \HH^2_k(R,A_n) & \rTo & \HH^2_k(R,A) & \rTo^n & \HH^2_k(R,A) & \rTo & \HH^3_k(R,A_n) & \rTo & 0 \\
\dTo>{\!\wr} &        & \dTo>\gp       &      & \dTo>\bp       &      & \dTo>\SP     &        & \dTo>\SP     &      & \dTo>{\!\wr} \\
(\pi_0 A'^0)^\ast & \rTo^n & (\pi_0A')^\ast &\rTo & \scrF(A_n')^\ast & \rTo & (\pi_1A^0)^\ast & \rTo^n & (\pi_1A)^\ast & \rTo & (\pi_1\tilde\alpha_\ast A_n)^\ast & \rTo & 0  
\end{diagram}
where all vertical morphisms are induced by the perfect pairings of Shafarevic, Bester/Bertapelle and Grothendieck.
\end{prop}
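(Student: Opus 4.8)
The plan is to realise both rows as the long exact sequences obtained by applying a (derived) functor to the short exact sequence \eqref{abvar_n} and to its ``dual'', and to recognise the three families of vertical arrows as cohomological shadows of one and the same cup product; commutativity of the ladder then reduces to the compatibility of cup products (and of Bester's transformation $d$ of \eqref{F_morph}) with connecting morphisms, i.e.\ to properties \enum 3--\enum 4 of Theorem \ref{der_cup_prod} applied to \eqref{abvar_n}. Finally the extremal vertical maps are pinned down by the five lemma.

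For the top row, applying $\RR\Gamma\!_{k,R}$ to the distinguished triangle $A_n\to A\Longto n A^0\to A_n[1]$ coming from \eqref{abvar_n} and passing to cohomology (Lemma \ref{seq_loc_coh_der}) yields the long exact local cohomology sequence
\[
\cdots\to\HH^i_k(R,A_n)\to\HH^i_k(R,A)\Longto n\HH^i_k(R,A^0)\to\HH^{i+1}_k(R,A_n)\to\cdots ;
\]
the identifications $\HH^1_k(R,A)=0$, $\HH^i_k(R,A^0)=\HH^i_k(R,A)$, $\HH^i_k(R,(A^0)_n)=\HH^i_k(R,A_n)$ for $i\ge 1$ and $\HH^2_k(R,A)=\HH^1(K,A_K)$ of the foregoing cohomology lemma, together with $\HH^i_k(R,A)\iso\HH^{i-1}(K,A_K)=0$ for $i\ge 3$ (as in Lemma \ref{local_coh_lemma}, using $\HH^{\ge 2}(K,A_K)=0$), put it in the displayed shape, with surjective last arrow. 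For the bottom row, Proposition \ref{Fi_functor}, \enum 2 applied to $0\to A'_n\to A'\Longto n A'^0\to 0$ gives
\[
0\to\scrF^0(A'_n)\to\scrF^0(A')\Longto n\scrF^0(A'^0)\to\scrF^1(A'_n)\to\scrF^1(A')\to\scrF^1(A'^0)\to 0 ,
\]
and one rewrites this by $\scrF^0=\pi_1\tilde\alpha_\ast$, $\scrF^1=\pi_0$ on smooth groups (Proposition \ref{prop_Fi}), $\scrF^1=\scrF$ of Bester on quasi-finite flat groups (Proposition \ref{newF_besterF}), $\scrF^i(A'_n)=\scrF^i((A'_n)^f)$ (the empty-special-fibre part dies under the Greenberg functor, just as $\HH^i_k(R,A'_n)=\HH^i_k(R,(A'_n)^f)$ on the other side), $\pi_0A'^0=0$ and $\pi_1A'^0=\pi_1A'$ (because $\oldphi_{A'}=A'/A'^0$ is finite \'etale, so $\scrF^0(\oldphi_{A'})=0$), obtaining $0\to\pi_1\tilde\alpha_\ast A'_n\to\pi_1A'\Longto n\pi_1A'^0\to\scrF(A'_n)\to\pi_0A'\to 0$; the bottom row of the diagram is the Pontryagin dual of this sequence (with a leading zero), the exactness of Pontryagin duality on the groups involved being subsumed in the perfectness assertions cited below.

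Now the vertical maps. By the derived-category description worked out in the proof of Proposition \ref{bester_eval}, Bester's pairing is the composite of the derived cup product for $f\colon R_\fl\to R_\et$ with the transformation $d$ of \eqref{F_morph}, followed by passage to local cohomology and hypercohomology; Shafarevic's pairing is the pairing it induces on the quotient $\HH^2_k(R,A)=\HH^1(K,A_K)$, and Grothendieck's pairing is, by Theorem \ref{bester_gp}, the one induced on the sub $\HH^1_k(R,A^0)=\oldphi$, each up to sign. Hence every square of the ladder is an instance of the functoriality of that cup product in \eqref{abvar_n}, in the spirit of diagram \eqref{cup_exact_seq}, read off after the above identifications, after applying $d$, and after Pontryagin duality; the signs are absorbed exactly as in the proof of Theorem \ref{bester_gp}. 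The leftmost vertical arrow is $0\to 0$, and $\gp$ (Proposition \ref{mp_gp}), $\bp$ (Proposition \ref{Gamma_bij} and the remark following it, resting on Theorem \ref{bester_pairing}) and $\SP$ (perfectness of Shafarevic's pairing in the semistable case) are bijective, so the five lemma forces the rightmost vertical arrow to be an isomorphism too.

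The step I expect to be the main obstacle is making precise that Bester's pairing, in \emph{all} the degrees occurring here — not merely the degree in which it was evaluated for $\mu_{q,R}$ in Proposition \ref{bester_eval}, but also the corners involving $\scrF^0=\pi_1\tilde\alpha_\ast$ and the degree-$3$ corner — is genuinely the restriction of a single morphism in the derived category that is functorial in the finite flat group, so that the compatibility statement of Theorem \ref{der_cup_prod} applies verbatim. Concretely one must rerun the argument of Proposition \ref{bester_eval} with $\mu_{q,R},\bbZ\mod q$ replaced by the Cartier-dual pair formed by $A_n$ and $A'_n$ (via the duality of the N\'eron models of $A_K$ and $A'_K$), carry the functor $\RR\scrF=\LL\pi_0\circ\RR\alpha_\ast$ through the spectral sequence of Proposition \ref{bester_ss}, and verify that the connecting homomorphisms produced by \eqref{abvar_n} on the two sides correspond. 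The attendant bookkeeping of signs, and of the finite part $(A_n)^f$ versus the full quasi-finite kernel $A_n$, is a secondary but real nuisance.
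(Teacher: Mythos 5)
Your construction of the two rows is correct and considerably more explicit than what the paper records: the top row is the long exact sequence of local cohomology for $0\to A_n\to A\Longto{n}A^0\to 0$ after the identifications of the preceding lemma (plus the vanishing $\HH^2(K,A_K)=0$ needed for the terminal zero), and the bottom row is the Pontryagin dual of the six-term sequence of Proposition \ref{Fi_functor}, \enum 2, rewritten via Propositions \ref{prop_Fi} and \ref{newF_besterF} and $\pi_0A'^0=0$. The five-lemma remark pinning down the outer isomorphisms is also fine, and is more than the paper says.

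The gap is exactly where you locate it, and it is not a secondary nuisance but the substance of the statement. Commutativity of the squares involving $\SP$ is not a formal consequence of Theorem \ref{der_cup_prod}: Bester's pairing is not a derived cup product but the composite of one with the transformation $d$ of \eqref{F_morph} (i.e.\ with $\RR\scrF=\LL\pi_0\circ\RR\alpha_\ast$), and the compatibility of \emph{that} composite with the connecting morphisms coming from \eqref{abvar_n} is precisely what has to be proved; moreover the identification of the resulting map $\HH^2_k(R,A)=\HH^1(K,A_K)\to(\pi_1A')^\ast$ with Shafarevic's pairing is itself a theorem, not a definition, at least on the prime-to-$p$ part. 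Your plan to ``rerun Proposition \ref{bester_eval} with $A_n$, $A'_n$ in place of $\mu_{q,R}$, $\bbZ\mod q$'' is a programme, not a proof: nothing in Section 2 gives you for free that $d$ commutes with the boundary maps in all the degrees occurring here, nor that the finite part $A_n^f$ can be substituted for $A_n$ compatibly on both sides. The paper closes this gap by citation: the squares involving $\bp$ and $\SP$ commute by \cite{be03}, where Shafarevic's pairing on the $p$-part is constructed from the Bester/Bertapelle pairing and these compatibilities are established, and the square involving $\gp$ and $\bp$ commutes by Theorem \ref{bester_gp}. If you do not wish to cite \cite{be03}, you must actually carry out the derived-category computation you sketch; as written, the central commutativity assertion is unproved. (A minor further point: Theorem \ref{bester_gp} only gives agreement up to sign, so that square commutes only up to sign --- a caveat the paper itself elides.)
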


\begin{proof}
The existence of the horizontal sequences is clear.
The squares involving $\bp$ and $\SP$ commute
by \cite{be03}, the square involving $\bp$ and $\gp$ commute by theorem \ref{bester_gp}.
\end{proof}

\begin{remark}
This diagram, of course, induces the sequence of parings as in \eqref{pairing_seq}.
\end{remark}

\noindent{\bf Acknowledgements.} I would like to thank Alessandra Bertapelle and
Siegfried Bosch for many helpful remarks on previous versions of this article
and for valuable dicussions on the arithmetic of abelian varieties.

\def\MakeUppercase#1{#1}

\addcontentsline{toc}{chapter}{\bibname}

{\footnotesize

}

\end{document}